\definecolor{bleu1}{RGB}{0,57,128}
\def\bleu1{\color{bleu1}}
\patchcmd{\section}{\normalfont}{\normalfont \bleu1}{}{}
\patchcmd{\subsection}{\normalfont}{\normalfont \bleu1}{}{}
\patchcmd{\subsubsection}{\normalfont}{\normalfont \bleu1}{}{}
\newtheorem{proposition}{Proposition}[section]
\newtheorem{theorem}{Theorem}[section]
\newtheorem{definition}{Definition}[section] 
\newtheorem{lemma}{Lemma}[section]
\newtheorem{remark}{Remark}[section]
\newtheorem{corollary}{Corollary}[section]
\newtheorem{example}{Example}
\newtheorem{Claim}{Claim}
\newtheorem{Problem}{Problem}
\newcommand{\Z}{{\mathbb Z}}
\newcommand{\C}{{\mathbb C}}
\newcommand{\R}{{\mathbb R}}
\newcommand{\Q}{{\mathbb Q}}
\newcommand{\T}{{\mathbb T}}
\newcommand{\id}{\operatorname{id}}
\newcommand{\F}{{\mathbb F}}
\newcommand{\N}{{\mathbb N}}
\newcommand{\SL}{\mathrm{SL}}
\newcommand{\PSL}{\mathrm{{PSL}}}
\newcommand{\SO}{\mathrm{{SO}}}
\newcommand{\Sp}{\mathrm{{Sp}}}
\newcommand{\HSp}{\mathrm{{HSp}}}
\newcommand{\Her}{\mathrm{{Her}}}
\newcommand{\GL}{\mathrm{{GL}}}
\newcommand{\U}{\mathrm{U}}
\tikzset{surface/.style={draw=blue!70!black, fill=blue!40!white, fill opacity=.6}}
\pgfplotsset{compat=1.17}
\tikzset{reuse path/.code={\pgfsyssoftpath@setcurrentpath{#1}}}
\begin{document}

\title[On hyperbolicity among regular symplectic cocycles]{Hyperbolicity for one-frequency analytic quasi-periodic (Hermitian)-symplectic cocycles}
\author{Duxiao Wang}
\address{Chern Institute of Mathematics and LPMC, Nankai University, Tianjin 300071, China}
\email{2542721099@qq.com}

\author{Disheng Xu}
\address{School of Science, Great Bay University and Great bay institute for advanced study, 
Songshan Lake International Innovation Entrepreneurship Community A5, Dongguan 523000, China}
\email{xudisheng@gbu.edu.cn}

\author{Qi Zhou}
\address{
Chern Institute of Mathematics and LPMC, Nankai University, Tianjin 300071, China
}

 \email{qizhou@nankai.edu.cn}

\begin{abstract}
We demonstrate the existence of an open dense subset within the class of real analytic one-frequency quasi-periodic $\mathrm{\Sp}(4,\mathbb{R})$-cocycles, characterized by either the distinctness of all their Lyapunov exponents or the non-zero nature of all their accelerations, which partially answers an open problem raised by A. Avila.

		\end{abstract}
	\maketitle

\section{Introduction}

Understanding hyperbolic behavior is a central problem in the study of dynamical systems. Generally, hyperbolicity in dynamical systems is characterized through certain geometric or statistical properties of linear cocycles. In its simplest form, a linear (or matrix) cocycle consists of a measure-preserving dynamical system \( f : (X, \mu) \to (X, \mu) \) together with a matrix-valued function \( A : X \to G \), where \( A \) is typically assumed to be at least continuous, and \( G \) is a matrix Lie group (often referred to as the \textit{structure group} of the cocycle). The iteration of the cocycle has the form $A^n(x):=A(f^{n-1}(x))\cdots A(x)$. More generally, a linear cocycle can be seen as a morphism of vector bundles projecting to a transformation \( f \). 
\begin{example}[Derivative cocycles]
Let $f$ be a volume preserving and orientation preserving diffeomorphism on $\T^n$, then the tangent map $Df: \T^n \to \SL(n,\R)$ can be  identified as a linear cocycle over $f:(\T^n, \mathrm{Leb})\to (\T^n, \mathrm{Leb})$, which is called the \textit{derivative cocycle} associated to $f$. 

\end{example}
\begin{example}[Random products of matrices (random walks) on a matrix Lie group]
 Suppose $p$ is a probability measure on a matrix Lie group $G$, then the random products of i.i.d. matrices with distribution $p$ can be viewed as a linear cocycles $F$ over the shift map $\sigma$ on $(X,\mu),  X:=G^\N, \mu:=p^{\otimes \N}$ and  $F:X\to G, (X_0, X_1,\dots)\mapsto X_0$. 
\end{example}
\begin{example}[Quasiperiodic cocycles]Let $X=\T^n, \mu=\mathrm{Leb}$, a quasiperiodic $G$-cocycle is defined by a pair $(\alpha, A)$ where $\alpha\in \R^n$ be rationally independent and $A:\T^n\to G$. 
\end{example}

Linear cocycles have been extensively studied in various contexts, including differential dynamical systems 
\cite{ViaLec}, random walks on Lie groups and homogeneous dynamical systems\cite{BQ}, and spectral theory of quasiperiodic Schr\"odinger operators \cite{DF1,DF2,JICM}. 

The hyperbolicity of a linear cocycle is often described in terms of its Lyapunov exponents. Recall that the \( k \)-th Lyapunov exponent of a linear cocycle \((f, A, \mu)\) is defined as,
$$L_k(A) := \lim_{n\to \infty} \frac{1}{n}\int \ln \sigma_k(A^n(x))d\mu(x),$$
where $\sigma_k(A)$ is the $k$-th singular value of $A$. 
Several levels of hyperbolic behavior can be classified as follows:
\begin{enumerate}
    \item \textbf{Positive Lyapunov exponents:} At least one Lyapunov exponent is positive.
    \item \textbf{Non-uniform hyperbolicity:} There are no zero Lyapunov exponents.
    \item \textbf{Uniform hyperbolicity:} There exists a continuous invariant splitting \( E^s \oplus E^u \) such that the cocycle $A$ restricted to \( E^u \) grows exponentially fast, while \( A|_{E^s} \) contracts exponentially fast.
        \item \textbf{Simple Lyapunov spectrum:} All Lyapunov exponents are distinct.
\end{enumerate}

In the context of linear cocycles, a fundamental question concerns the density of hyperbolic behavior: given a fixed base dynamics \( f \), can we perturb the cocycle \( A \) within a specified regularity class to achieve a different (typically more hyperbolic) behavior?

This problem depends on three key ingredients:
\begin{enumerate}
\item \textbf{Structure group:} The properties of the structure group \( G \) critically influence the hyperbolic behavior of the cocycle. For instance, if \( G \) is compact, hyperbolicity cannot occur. If \( G = SO(m, n) \) or \( SU(m, n) \), the middle \( m - n \) Lyapunov exponents must be zero. Conversely, for  $G = \SL(2, \mathbb{R})$, Avila \cite{Av2011} demonstrated a general (assuming $f$ is not periodic on $\mathrm{supp}\mu$) density result for positive Lyapunov exponents in $ \SL(2, \mathbb{R})$ cocycles (See also \cite{Kri}, \cite{F-K}, etc.). This result was later generalized to real and Hermitian symplectic cocycles in \cite{DiSheng Xu}. However, for other groups, such as $G = \SL(2, \mathbb{C})$ or $\SL(3,\mathbb{R})$, extending these results remains very challenging. For example, there is a famous open question dates back to A. Avila\footnote{Private communication dates back to 2015.}, that whether positive Lyapunov exponents are dense for $ \SL(2, \mathbb{C})$ or $\SL(3,\mathbb{R})$ cocycles, without further assumptions.
    \item \textbf{Base dynamics:} When the base dynamics exhibit some hyperbolicity, cocycles in general position can often ``inherit'' hyperbolicity from the base. For strongly hyperbolic base dynamics, such as random matrix products, the simplicity of the Lyapunov spectrum has been investigated in works by Furstenberg \cite{Fur63}, Furstenberg and Kesten \cite{FK60}, Guivarc'h and Raugi \cite{GR} among others. In particular when the support of the distribution of random matrices is Zariski dense in $\SL(n,\mathbb R)$, Gol’dsheid-Margulis showed the associated Lyapunov spectrum is simple \cite{GM}. And A. Avila and M. Viana \cite{AV} gave a criterion of simplicity of Lyapunov spectrum for linear cocycles over Markov map and proved the Zorich-Kontsevich conjecture.
    
    For deterministic system with uniform or non-uniform hyperbolicity, see Bonatti-Viana \cite{BV} and Viana \cite{V} among others. For weaker hyperbolicity (e.g., partial hyperbolicity), similar results have been proved by Avila, Santamaria and Viana using techniques from partially hyperbolic systems \cite{ASV}. 
    
    In general, the weaker the hyperbolicity of the base, the weaker the hyperbolicity one can ``borrow'' from the base for the cocycle. Of course there are many dynamical systems without any hyperbolicity. A typical one is quasiperiodic systems. The story is essentially different, (before \cite{Av2011}), Avila proved that there is a dense set of real-analytic quasiperiodic $\SL(2,\mathbb{R})$-cocycle with a positive Lyapunov exponent. The key technique in the proof is also used in the solution of the Ten Martini theorem by Avila-Jitomirskaya \cite{AJ}.
    \item \textbf{Regularity assumptions:} The ability to perturb cocycles to achieve hyperbolicity depends on the regularity class considered. Lower regularity classes typically allow for easier perturbations, see Bochi-Viana theorem which completes a program proposed by Mane \cite{Bo}, \cite{BoVi} for an example for $C^0$ cocycles; while higher regularity, such as analyticity, poses significant challenges. 
    
\end{enumerate}

In summary, for general structure groups \( G \), non-hyperbolic base dynamics (e.g. systems with zero entropy, in particular rotations on tori), and high regularity assumptions (e.g., analyticity), the problem of density or prevalence of hyperbolic behavior remains extremely difficult, particularly for levels (2), (3) and (4), except in special cases like \( G = \SL(2, \mathbb{R}) \), where (1), (2), (4) are equivalent.


\subsection{Main results}
In this paper, we make progress on levels (2), (3), and (4) for real analytic one-frequency quasi-periodic \( G \)-cocycles, where \( G=\Sp(2d,\R) \)  or $\HSp(2d)$.  Recall that  $\Sp(2d,\R)$ (resp. $\HSp(2d)$) denotes  
the set of symplectic (resp. hermitian symplectic) matrices, i.e. 
    \begin{center}
        $\Sp(2d,\R)=\{A \in \R^{2d \times 2d}: A^{T}JA=J\},\qquad \HSp(2d)=\{A \in \C^{2d \times 2d}: A^{*}JA=J\}$.
    \end{center}    
where $J=\begin{pmatrix}
       O & -I_d \\
       I_d & O 
   \end{pmatrix}$ denotes the standard symplectic structure on $\R^{2d}$ (resp. $\C^{2d}$). 
By \cite{DiSheng Xu}, the quasiperiodic cocycles $ (\alpha,A) $ with $L_1(A)>0$ are dense in $C^\omega(\mathbb{T},G)$, after that, A. Avila raised us the following open problem:

\begin{Problem} Let $\alpha \in \R\backslash\Q$, $ G=\Sp(2d,\mathbb{R}) $ or $ G=\HSp(2d) $, whether $ (\alpha,A) $ with simple Lyapunov exponents are open and dense in $C^\omega(\mathbb{T},G)$? \end{Problem}

To understand this problem, the fundamental result belongs to Avila-Jitomirskaya-Sadel \cite{1D}, who proved that there exists an dense open subset $\mathcal{U}\subset C^\omega(\mathbb{T},\GL(m,\C))$, such that for every $A\in \mathcal{U}$, the Oseledets filtration of $ (\alpha,A) $ is either trivial (all Lyapunov exponents are equal) or dominated. Here an invariant continuous decomposition $\mathbb{C}^{m} = E^1(x) \oplus \cdots \oplus E^k(x)$ is called dominated if there exists $n\geq 1$ such that for any unit vectors $w_j\in E^j(x) $ we have $\|A_n(x)\cdot w_j\|> \|A_n(x)\cdot w_{j+1}\|$.

The key idea to get this kind of hyperbolicity comes from regularity property, which involves the holomorphic extension of the cocycle dynamics, and was first introduced (in the particular case of $\SL(2,\C)$-cocycles) by Avila \cite{Global}. Note for any $ B\in \GL(m,\C) $, the $ k $-th exterior product $ \Lambda^kB $ satisfies $ \sigma_1(\Lambda^kB)=\|\Lambda^kB\| $, leading to the expression $ L^k(A)=\sum\limits_{j=1}^kL_j(A) $, where $$
L^k(A)=\lim\limits_{n\rightarrow \infty}\frac{1}{n}\int_{\T}\ln\|\Lambda^kA_n(x)\|dx.
$$ By analyticity, one can extend $A(x)$ to a strip $|Im x|<\delta$ in the complex plane. We say that a cocycle $ (\alpha,A) $ is $ k $-\textbf{regular} if and only if $ y \mapsto L^k(A(\cdot+iy)) $ is an affine function of $ y $ near $ 0 $, and we say $(\alpha,A)$ is \textbf{irregular}, if for any $1\leq k\leq m$, $ (\alpha,A) $ is not $ k $-regular. Avila-Jitomirskaya-Sadel \cite{1D} proved that for every $t \neq 0$ small enough, $(\alpha,A(\cdot+it))$ is $k$-regular, which actually gives the hyperbolicity (dominated splitting), by assuming $L_k(A)>L_{k+1}(A)$. One should note the former result means that the convex functions $ y \mapsto L^k(A(\cdot+iy)) $ are in fact piecewise affine, which is connected to a quantization phenomenon of \textit{acceleration}, the key concept of Avila's global theory \cite{Global,1D}: $$
\omega^k(A)=\lim\limits_{y\rightarrow 0^+}\frac{1}{2\pi y}(L^k(A(\cdot+iy))-L^k(A)), \qquad \omega_k(A)= \omega^k(\alpha,A)-\omega^{k-1}(\alpha,A).
$$

Let's back to Avila's problem, and first concentrate on the symplectic case. Compared to \cite{1D}, there are two fundamental issues which need to be solved. First, how to avoid holomorphic extension (othewise the cocycle is not symplectic anymore) to get hyperbolicity. Second, how to perturbate the cocycle to have distinct Lyapunov exponents. Our basic observation here is that owing to the symplecity, without holomorphic extension, 
 one can get hyperbolicity (even simple Lyapunov exponent) from  regularity itself. 
 Note for $ A \in C^\omega(\T, \Sp(2d,\R)) $, $ (\alpha,A) $ is $ k $-regular if and only if $ \omega^k(A)=0 $.
Now, we partially answer A.Avila's open problem as follows: 
\begin{theorem}\label{SIMPLE DENSITY}
Let $\alpha\in \R\backslash\Q$. The  cocycles $ (\alpha,A) $ with simple Lyapunov exponents are open and dense in the set 
of \( 1 \)-regular or \( 2 \)-regular  $\Sp(4,\mathbb{R})$-cocycles, i.e., in the set
\begin{center}
            $\{A \in C^\omega(\mathbb{T},\Sp(4,\mathbb{R})):\omega^1(A)\omega^2(A)=0 \}.$
        \end{center}
\end{theorem}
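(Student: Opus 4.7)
My plan splits into openness and density. For \textbf{openness}, simple Lyapunov exponents for an $\Sp(4,\R)$-cocycle mean $L_1>L_2>0$ (the symplectic pairing of singular values gives $L_3=-L_2$, $L_4=-L_1$), and any such $A$ automatically has $\omega^1(A)=\omega^2(A)=0$: any dominated splitting of index $k$ extends holomorphically to a thin complex strip, and then $L^k(A(\cdot+iy))=\int_\T\log|\det(A|_{E^u_k})(x+iy)|\,dx$ is independent of $y$ by Cauchy's theorem. Combining this with the strict inequalities $L_1>L_2$ and $L_2>L_3$ and the Avila--Jitomirskaya--Sadel machinery of \cite{1D} produces dominated splittings of indices $1$ and $2$, which refine to a $1{+}1{+}1{+}1$ splitting of $\C^4$; such splittings are $C^0$-stable, so simple LE is open in $C^\omega(\T,\Sp(4,\R))$, and in particular relatively open in $\{\omega^1\omega^2=0\}$.

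For \textbf{density}, by Paragraph~1 it is enough to approximate any $A_0\in\{\omega^1\omega^2=0\}$ in $C^\omega$ by simple-LE cocycles, as the approximants automatically re-enter the set. By symmetry, assume $\omega^1(A_0)=0$. The central observation highlighted in the excerpt is that for a real-analytic symplectic cocycle the function $y\mapsto L^k(A(\cdot+iy))$ is even, so ``affine near $0$'' collapses to ``constant near $0$'', and hence $\omega^1(A_0)=0$ really means $L^1(A_0(\cdot+iy))\equiv L_1(A_0)$ in a real neighborhood of $y=0$. Feeding this into the AJS argument \emph{directly at the real line}---which is the key point, since the complex translation used in \cite{1D} would destroy symplecticity---produces, provided $L_1(A_0)>L_2(A_0)$, an index-$1$ dominated splitting $\C^4=E^u\oplus E^c\oplus E^s$ with $\dim E^u=\dim E^s=1$ and $E^c$ a two-dimensional symplectic subbundle. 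After a real-analytic trivialization of $E^c$ (possibly on a double cover of $\T$ absorbing a $\Z/2$-monodromy of the symplectic frame), $A_0|_{E^c}$ becomes an $\SL(2,\R)$-cocycle $B_0$: if $L(B_0)>0$ then $A_0$ already has simple LE, otherwise Avila's density theorem \cite{Av2011} furnishes $B_n\to B_0$ in $C^\omega(\T,\SL(2,\R))$ with $L(B_n)>0$, and I lift these to perturbations $A_n\to A_0$ in $\Sp(4,\R)$ acting as $B_n$ on the (perturbed) $E^c$ and trivially on $E^u\oplus E^s$; for large $n$ the dominated splitting persists, so $L_1(A_n)>L_2(A_n)=L(B_n)>0$, i.e.\ simple LE.

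The \textbf{main obstacle} is the remaining degenerate case $L_1(A_0)=L_2(A_0)$, with the fully critical case $L_1(A_0)=0$ (all four exponents vanishing) being the hardest. When $L_1=L_2>0$ there is still an automatic index-$2$ dominated splitting into Lagrangian subbundles (coming from $\omega^2=0$, forced by the argument of Paragraph~1 applied at index $2$), and an analogous restriction-then-perturb scheme using the induced $\GL(2,\R)$-cocycle on one Lagrangian and density of simple LE for analytic $\GL(2,\R)$-cocycles (which reduces to \cite{Av2011} after normalizing by the determinant) should work. The genuinely new difficulty is $L_1(A_0)=0$: here no dominated splitting is available and one must first open a gap in the spectrum by a preliminary perturbation, which I expect to handle by combining Avila's quantization of the acceleration $\omega_k$ with the symplectic positive-exponent density theorem of \cite{DiSheng Xu}, producing a nearby cocycle with $L_1>0$ (generically $L_1>L_2$) so that the reduction of the previous paragraph applies. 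Subsidiary technical points---uniform real-analytic control when lifting $B_n$ to $A_n$, the double-cover trivialization of $E^c$, and a clean self-contained proof that $\omega^k=0$ plus $L_k>L_{k+1}$ yields a real-line dominated splitting in the symplectic setting---are expected to be manageable refinements rather than conceptual barriers.
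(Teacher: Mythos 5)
Your overall strategy — block-diagonalize along the Oseledets splitting, reduce to a low-dimensional cocycle on an invariant subbundle, perturb that reduced cocycle to positive Lyapunov exponent using one-dimensional results, and lift back to $\Sp(4,\R)$ — is the same as the paper's. But there are two substantive problems.

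First, the openness paragraph contains a false claim: simple Lyapunov spectrum does \emph{not} automatically give $\omega^1=\omega^2=0$. A supercritical cocycle has distinct exponents and positive acceleration, and for $\Sp(4,\R)$ the same phenomenon occurs: Theorem~\ref{DOMINATE=REGULAR} says that \emph{given} $L_k>L_{k+1}$, $k$-regularity and $k$-domination are equivalent — it does not say that a gap alone produces either of them. You do not actually need any of this: openness of simple Lyapunov spectrum follows directly from continuity of $L_k$ in $C^\omega$ (Theorem~\ref{CONTIOUS LYAPUNOV}), and the constraint set $\{\omega^1\omega^2=0\}$ is open because $\omega^1,\omega^2\geq 0$ for symplectic cocycles while the acceleration is upper semicontinuous.

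Second and more seriously, the density argument sweeps the central difficulty into what you call a ``manageable refinement.'' The monodromy obstruction is not on $E^c$ — being symplectic, $E^c$ always admits a $1$-periodic symplectic frame (Proposition~\ref{REAL SYMPLECTIC LIFT}(1)), so the center cocycle $\Gamma$ is always in $C^\omega(\T,\SL(2,\R))$ and your index-$1$ argument when $L_1>L_2$ is fine. The obstruction is on the isotropic Lagrangian bundles $E^u,E^s$ in the uniformly hyperbolic case $L_1=L_2>0$: when $\tau(E^u)=\tau(E^s)=-1$, the reduced $\SL(2,\R)$-cocycle $\Lambda$ lies in the \emph{proper submanifold} $\mathfrak{A}=\{\Lambda\in C^\omega(2\T,\SL(2,\R)):\Lambda(x+1)=\mathcal{T}^{-1}\Lambda(x)\mathcal{T}\}$, and for the lift $T(x+\alpha)\,\mathrm{diag}(\lambda\Lambda_n,\lambda^{-1}\Lambda_n^{-\top})\,T(x)^{-1}$ to be $1$-periodic (hence in $C^\omega(\T,\Sp(4,\R))$) the perturbation $\Lambda_n$ must stay in $\mathfrak{A}$. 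Avila's density theorem \cite{Av2011} gives approximants in $C^\omega(2\T,\SL(2,\R))$ with no control over the $\mathcal{T}$-twist. Worse, every $\Lambda\in\mathfrak{A}$ has topological degree zero (Lemma~\ref{DEGREE=0}), which shuts down the monotonic-perturbation and Kotani-theoretic mechanisms underlying \cite{Av2011} and \cite{DiSheng Xu}. The paper's Propositions~\ref{ANTIPERIODIC POSITIVE DENSE} and~\ref{RARECRITICAL} close precisely this gap, by (i) observing $2\rho(\alpha,\Lambda)\equiv 0\ (\mathrm{mod}\ 1)$ for $\Lambda\in\mathscr{A}$, (ii) invoking Avila's Almost Reducibility Theorem and then showing the ART conjugacy $B$ can be right-multiplied by a constant so that $B\in\mathscr{A}$ via analysis of $S(x)=B(x)^{-1}\mathcal{T}B(x+\tfrac12)$, and (iii) redoing this quantitatively for Liouville $\alpha$ using the estimates of \cite{ARC}, plus a separate argument (extending \cite{Global}'s $q_i$-coefficient machinery to $\mathscr{A}$) to rule out critical cocycles having interior. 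This constitutes the bulk of the paper's new work, and without it the proposal does not close.
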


As a direct consequence, we have the following:
\begin{corollary}
    Let $\alpha\in \R\backslash\Q$. There exists a dense subset $\mathcal{V}$ of $A \in C^\omega(\mathbb{T},\Sp(4,\mathbb{R}))$, such that if $A\in \mathcal{V}$, either $(\alpha,A)$ has simple Lyapunov exponents or $(\alpha,A)$ is irregular. 
\end{corollary}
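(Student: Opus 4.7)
The plan is to read the corollary as an immediate consequence of Theorem \ref{SIMPLE DENSITY}, once one translates the notion of \emph{irregular} $\Sp(4,\mathbb{R})$-cocycle into the quantitative condition $\omega^1(A)\omega^2(A)\neq 0$ appearing in the theorem. I would first carry out this translation, and then assemble the desired dense set by pasting together the simple-Lyapunov-spectrum cocycles supplied by the theorem with the irregular cocycles.

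The key observation is a symmetry inherited from the symplectic structure. For $A\in C^\omega(\mathbb{T},\Sp(4,\mathbb{R}))$, the analytic extension satisfies $A(\cdot+iy)\in \Sp(4,\mathbb{C})$ throughout a strip, since the defining identity $A^{T}JA=J$ persists by analytic continuation. For any $B\in\Sp(4,\mathbb{C})$ one has $B^{-1}=-JB^{T}J$, and since $J$ is unitary this forces the singular values to come in reciprocal pairs, $\sigma_k(B)\sigma_{5-k}(B)=1$. Applying this to the iterates and taking Lyapunov limits gives, for every $y$ in the strip of extension,
\begin{equation*}
L_1(A(\cdot+iy))+L_4(A(\cdot+iy))=0,\qquad L_2(A(\cdot+iy))+L_3(A(\cdot+iy))=0,
\end{equation*}
so $L^3(A(\cdot+iy))\equiv L^1(A(\cdot+iy))$ and $L^4(A(\cdot+iy))\equiv 0$. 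Consequently $\omega^4(A)=0$ trivially and $\omega^3(A)=\omega^1(A)$, and hence an $\Sp(4,\mathbb{R})$-cocycle is irregular if and only if $\omega^1(A)\omega^2(A)\neq 0$.

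With this dictionary in hand, let $\mathcal{R}:=\{A\in C^\omega(\mathbb{T},\Sp(4,\mathbb{R})):\omega^1(A)\omega^2(A)=0\}$ and let $\mathcal{I}$ be its complement, i.e.\ the irregular locus. Theorem \ref{SIMPLE DENSITY} provides a subset $\mathcal{S}\subset \mathcal{R}$ of cocycles with simple Lyapunov spectrum that is open and dense in $\mathcal{R}$ for the ambient $C^\omega$ topology. I would then set $\mathcal{V}:=\mathcal{S}\cup\mathcal{I}$: by construction every element of $\mathcal{V}$ either has simple Lyapunov spectrum or is irregular. Density of $\mathcal{V}$ is verified by cases: if $A\in\mathcal{I}$ then $A\in\mathcal{V}$ already, while if $A\in\mathcal{R}$ then the density of $\mathcal{S}$ inside $\mathcal{R}$ produces a sequence $A_n\in\mathcal{S}\subset\mathcal{V}$ with $A_n\to A$.

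There is no serious obstacle here beyond the initial translation step; all the analytic content is absorbed into Theorem \ref{SIMPLE DENSITY}. The only points to be careful about are that the density afforded by the theorem is with respect to the ambient $C^\omega$ topology restricted to $\mathcal{R}$ (which is exactly what the case analysis uses), and that one really needs the complex-symplectic symmetry of singular values to legitimately discard $\omega^3$ and $\omega^4$ from the definition of irregularity.
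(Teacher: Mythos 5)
Your proof is correct and is exactly the argument the paper intends: the paper states the Corollary as ``a direct consequence'' of Theorem~\ref{SIMPLE DENSITY} without writing out details, and the only content to supply is (i) the identification of the irregular locus with $\{\omega^1(A)\,\omega^2(A)\neq 0\}$, and (ii) the trivial set-theoretic step of taking $\mathcal{V}=\mathcal{S}\cup\mathcal{I}$. Your singular-value symmetry computation (that $A(\cdot+iy)\in\Sp(4,\mathbb{C})$ and hence $\sigma_k\sigma_{5-k}=1$, giving $L^4\equiv 0$ and $L^3\equiv L^1$) is a clean way to carry out (i), and is the same symmetry the paper invokes in the proof of the first Lemma of Section~3.

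One small point worth flagging: read literally, the paper's definition of ``irregular'' requires that $(\alpha,A)$ be \emph{not} $k$-regular for all $1\le k\le m$; since $L^4(A(\cdot+iy))\equiv 0$ forces $\omega^4(A)=0$ identically, every $\Sp(4,\mathbb{R})$-cocycle is $4$-regular, so under the literal reading no cocycle would be irregular. What your symmetry computation really shows is that the conditions at $k=3,4$ carry no information ($\omega^3=\omega^1$, $\omega^4\equiv 0$), so in the symplectic context ``irregular'' must be taken to mean ``not $k$-regular for $1\le k\le d$'' --- which is consistent with the paper's definition of $\mathcal{R}(G)$ and with the statement of Problem~2. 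With that understanding your dictionary ``irregular $\Leftrightarrow$ $\omega^1\omega^2\neq 0$'' is exactly right, and the rest of the argument goes through without change.
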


Once we have this, in the $\Sp(4,\mathbb{R})$-case, Problem 1 reduces to the following:

\begin{Problem} 
    Let $\alpha\in \R\backslash\Q$, $A \in C^\omega(\mathbb{T},\Sp(4,\mathbb{R}))$.
Whether $ (\alpha,A) $ with simple Lyapunov exponents is  dense in the set of irregular cocycles?
\end{Problem}

\begin{remark}
 One can understand this problem in the following way, even back to $\SL(2,\R)$, it is still open whether uniformly hyperbolic cocycles are dense in supercritical cocycles ($L(A)>0, \omega(A)>0$), i.e. in the positive Lyapunov exponents case, whether one can perturbate  
the cocycle with $\omega(A)>0$ to $\omega(A)=0$.    
\end{remark}

As another direct consequence, the cocycles $(\alpha,A)$ with simple Lyapunov exponents are open and dense in $ \mathcal{R}(\Sp(4,\mathbb{R})) $, where $ \mathcal{R}(G) $ denotes the set of all $k-$regular cocycles, defined as $$
\mathcal{R}(G)=\{A \in C^\omega(\T,G): (\alpha, A) \text{ is } k\text{-regular for all } k \in \{1,2,\dots,d\}\}.
$$
Despite our inability to prove the density of cocycles with simple Lyapunov exponents in $ \mathcal{R}(\HSp(2d)), d\geq 2 $ or $ \mathcal{R}(\Sp(2d,\R)), d\geq 3 $, we are able to establish the following weaker result:

\begin{theorem}\label{UH DENSITY}
Let $\alpha\in \R\backslash\Q$,  \( G=\Sp(2d,\mathbb{R}) \) or \( G=\HSp(2d) \), then uniformly hyperbolic cocycles are open and dense in \( \mathcal{R}(G) \).
\end{theorem}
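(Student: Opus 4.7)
The plan is to prove Theorem \ref{UH DENSITY} by first reducing uniform hyperbolicity inside $\mathcal{R}(G)$ to positivity of $L_d$, and then establishing density of $\{L_d>0\}$ by applying \cite{DiSheng Xu} on the center block of the dominated splitting of $A$.

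Openness is immediate: uniform hyperbolicity is a $C^0$-open property of cocycles, so $\mathrm{UH}\cap\mathcal{R}(G)$ is open in the induced topology of $\mathcal{R}(G)$. For the \emph{characterization} $\mathrm{UH}\Leftrightarrow L_d>0$ inside $\mathcal{R}(G)$: by definition $A$ is $d$-regular, and the (Hermitian-)symplectic pairing gives $L_{d+1}(A)=-L_d(A)$, so $L_d(A)>L_{d+1}(A)$ is equivalent to $L_d(A)>0$. The Avila--Jitomirskaya--Sadel theorem at level $d$ then produces a dominated splitting $\C^{2d}=E^1\oplus E^2$ of index $d$; since both subspaces have complementary half-dimension and the cocycle preserves a (Hermitian-)symplectic form, each must be Lagrangian, so the splitting is an unstable/stable decomposition, i.e., UH. The converse being trivial, the theorem reduces to showing that $\{A\in\mathcal{R}(G):L_d(A)>0\}$ is dense in $\mathcal{R}(G)$.

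For the \emph{density}, take $A\in\mathcal{R}(G)$ with $L_d(A)=0$ and set $k=\max\{i\le d:L_i(A)>0\}$, so $0\le k<d$. Applying AJS at levels $k$ and $2d-k$, $A$ admits a dominated splitting $\C^{2d}=E^u\oplus E^c\oplus E^s$ with $\dim E^u=\dim E^s=k$ and $\dim E^c=2(d-k)\ge 2$. By the pairing structure of symplectic dominated splittings, $E^c$ is itself symplectic, so $A^c:=A|_{E^c}$ is a real-analytic $\Sp(2(d-k),\R)$-cocycle (respectively $\HSp(2(d-k))$) with all Lyapunov exponents zero. Decomposing $L^j(A)$ along the splitting and using that all $\omega^j(A)$ vanish, one checks $\omega^{j'}(A^c)=0$ for $j'\le d-k$, i.e., $A^c\in\mathcal{R}(\Sp(2(d-k),\R))$. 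Applying the density of $\{L_1>0\}$ from \cite{DiSheng Xu} to $A^c$ and reassembling via persistence of the dominated splitting yields $A'$ near $A$ with $L_{k+1}(A')>0$; iterating on the progressively shrinking center block eventually gives $L_d(A')>0$.

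The main obstacle is ensuring that each perturbation keeps the cocycle inside $\mathcal{R}(G)$: the perturbation of \cite{DiSheng Xu} may a priori produce cocycles with positive accelerations, taking us out of the regularity class. To overcome this, I would refine their construction within Avila's pluriharmonic-function framework (as used in the proof of Theorem \ref{SIMPLE DENSITY}), so that generic small perturbations of a regular cocycle remain regular while breaking the degeneracy of the central Lyapunov exponents.
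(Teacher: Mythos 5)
Your reduction to the density of $\{L_d>0\}$ in $\mathcal{R}(G)$ and your decomposition into $E^u\oplus E^c\oplus E^s$ are aligned with the paper's strategy (which runs an induction on $d$ rather than iterating on the shrinking center block, but these are essentially equivalent). However, there is a genuine gap at the core of your argument: you set $A^c:=A|_{E^c}$ and then treat it as an element of $C^\omega(\T,\Sp(2(d-k),\R))$ (resp. $\HSp$) to which Xu's density theorem applies. But $E^c$ is a nonconstant analytic subbundle of $\T\times\C^{2d}$, so $A|_{E^c}$ is a cocycle on this bundle, not a matrix-valued map on $\T$. To obtain an actual $\Sp(2(d-k),\R)$- or $\HSp(2(d-k))$-valued cocycle one needs an analytic, periodic, symplectic (resp. canonical Hermitian-symplectic) trivialization of $E^c$, and this is precisely the content of the paper's Propositions~\ref{DIAGNALIZATION LEMMA} and~\ref{DIAGNALIZATION LEMMA-1}. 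Those propositions are not automatic: in the real case the stable/unstable bundles may be topologically twisted (forcing a period-doubling $\chi=2$ for $T$ and $\Lambda$, though $\Gamma$ on $E^c$ remains $1$-periodic), and in the Hermitian-symplectic case finding a canonical frame for $E^c$ requires the Analytic Sylvester Inertia Theorem (Theorem~\ref{diagonalization-1}), a new result of the paper. Likewise, your ``reassembling via persistence of the dominated splitting'' implicitly needs the explicit conjugacy $T(x)$ from the block-diagonalization in order to push a perturbation of the center block back to a perturbation of $A$; without $T$ it is unclear how the perturbation of $A^c$ becomes a perturbation of $A$ preserving the group structure.

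Your final concern—that the perturbation might leave $\mathcal{R}(G)$—is not actually an obstacle and does not need the pluriharmonic refinement you propose. For symplectic or Hermitian-symplectic $A$, the functions $y\mapsto L^k(A(\cdot+iy))$ are even and convex, so $\omega^k(A)\ge 0$; since accelerations are quantized and upper semicontinuous, $\mathcal{R}(G)=\bigcap_k\{\omega^k=0\}$ is open in $C^\omega(\T,G)$, so any sufficiently small perturbation automatically stays regular. The paper simply invokes this openness, combined with continuity of the Lyapunov exponents, when it chooses $\tilde A$ close to $A$.
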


To conclude, this work represents a useful attempt forward in the study of real analytic one-frequency quasi-periodic $ G $-cocycles, particularly in understanding the density of hyperbolicity cocycles.

\subsection{Analytic diagonalization of Hermitian matrix and Analytic Sylvester Inertia Theorem}\label{adp}

Another noteworthy result, which emerged during our work and may hold independent interest, pertains to the analytic Sylvester Inertia Theorem. Before explaining this result, let's recall another related result: analytic diagonalization of Hermitian matrix-valued functions. Assume $ G(\cdot) \in C^\omega(I,\Her(m,\C)) $, where $ I $ is an open interval. Analytic diagonalization seeks to determine the existence of a matrix $ U \in C^\omega(I,U(m)) $ such that $ U(x)^*G(x)U(x) $ is diagonal. In cases where the eigenvalues of $ G(\cdot) $ do not cross (i.e., they remain distinct in a neighborhood of each point), the use of the implicit function theorem enables a straightforward analytic diagonalization. Consequently, the eigenvectors corresponding to distinct eigenvalues can likewise be chosen to be analytic. When there are repeated eigenvalues, the situation is more complicated, 
however Kato \cite{Kato} still showed that $G(\cdot)$ can be diagonalized analytically.

This result was later generalized for cases where $ G $ takes on values in normal matrices, following an observation by Bluster \cite{Kato}. Further extensions of this result, concerning regularity, have been made to encompass cases where $ G(\cdot) $ is in the $ C^\infty $ topology \cite{AKLM} and where $ G $ is in an ultra-differentiable topology \cite{KMR}. Additional results can be found in \cite{KL, Rai1, Rai2}. Such results are fundamental to spectral theory and perturbation analysis, with many important applications \cite{APP1,APP2,APP3}.

Nevertheless, the above generalizations do not address periodicity. A pertinent question arises: if $ G(\cdot) $ is periodic (i.e., $ G \in C^\omega(\mathbb{T}, \Her(m,\C)) $), can we still find a matrix $ U \in C^\omega(\mathbb{T}, \U(m)) $ such that $ U(x)^*G(x)U(x) $ is diagonal? In general, this is not the case due to the existence of degenerate eigenvalues. For example, consider the matrix
$$
G(x)=\begin{pmatrix}
e^{i \pi x}& -e^{i\pi x} \\
1 & 1 \\
\end{pmatrix}\begin{pmatrix}
2+\sin(\pi x) & 0 \\
0 & 2-\sin(\pi x) \\
\end{pmatrix}\begin{pmatrix}
e^{-i \pi x}& 1 \\
-e^{-i\pi x} & 1 \\
\end{pmatrix}.
$$
It can be verified that $ G \in C^\omega(\mathbb{T}, \Her(2,\C) \cap \GL(2,\C)) $, but there is no $ U \in C^\omega(\mathbb{T}, \U(2)) $ such that $ U(x)^*G(x)U(x) $ is diagonal since $ 1 $ is not a period of its eigenvalues. Nevertheless, we can establish the following result:

\begin{proposition}
\label{diagonalization}
Let \( G \in C^\omega(\mathbb{T}, \Her(m,\C)) \). Then there exists $K \in \mathbb{N}_+, U \in C^\omega(K\mathbb{T},$  $\U(m)) $ such that \( U(x)^*G(x)U(x) \) is diagonal.
\end{proposition}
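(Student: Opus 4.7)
The plan is to lift $G$ to the $1$-periodic analytic function $\tilde G : \R \to \Her(m, \C)$ and apply Kato's analytic diagonalization theorem on the simply connected line $\R$. This produces real analytic $\tilde U : \R \to \U(m)$ and real analytic $\lambda_1, \ldots, \lambda_m : \R \to \R$ with $\tilde U(x)^* \tilde G(x) \tilde U(x) = \tilde D(x) := \operatorname{diag}(\lambda_1(x), \ldots, \lambda_m(x))$. Since $\tilde G(x+1) = \tilde G(x)$, the multisets $\{\lambda_j(x+1)\}$ and $\{\lambda_j(x)\}$ coincide, and the identity theorem for analytic functions yields a permutation $\sigma \in S_m$ with $\lambda_j(x+1) = \lambda_{\sigma(j)}(x)$. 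Setting $K$ to be the order of $\sigma$ then gives $\lambda_j(x+K) = \lambda_j(x)$, so $\tilde D$ already descends to $K\T$.

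To descend $\tilde U$ as well, I would work with the Kato parallel-transport form of the construction: write $\tilde U(x) = U_K(x) \tilde U_0$, where $\tilde U_0 \in \U(m)$ diagonalizes $\tilde G(0)$ and $U_K : \R \to \U(m)$ is the unitary parallel transport along $\tilde G$, i.e., the solution of a linear ODE $\dot U_K(x) = B(x) U_K(x)$, $U_K(0) = I$, whose generator $B$ is real analytic, skew-Hermitian, and $1$-periodic (built from the cluster spectral projections of $\tilde G$). Floquet's theorem then gives $U_K(x+1) = U_K(x) M$ with constant unitary monodromy $M := U_K(1)$, hence $\tilde U(x+K) = \tilde U(x) \tilde M$ with $\tilde M := \tilde U_0^{-1} M^K \tilde U_0$ constant. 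Since $\tilde U(x)$ and $\tilde U(x+K)$ both diagonalize $\tilde G(x)$ to the same $\tilde D(x)$, the matrix $\tilde M$ lies in the commutant $\mathcal{A} := \{C \in M_m(\C) : [C, \tilde D(x)] = 0 \text{ for every } x\}$.

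To absorb this monodromy, I would pick $A \in \mathcal A \cap \Her(m, \C)$ satisfying $e^{iAK} = \tilde M^{-1}$; such $A$ exists because the unitary group of the finite-dimensional $*$-algebra $\mathcal A$ is a compact connected Lie group on which the exponential map is surjective. Then $U(x) := \tilde U(x) e^{iAx}$ is real analytic and satisfies $U(x+K) = \tilde U(x) \tilde M e^{iAK} e^{iAx} = U(x)$, while $U(x)^* \tilde G(x) U(x) = e^{-iAx} \tilde D(x) e^{iAx} = \tilde D(x)$ remains diagonal since $A$ commutes with $\tilde D(x)$ for every $x$. Consequently $U$ descends to the desired element of $C^\omega(K\T, \U(m))$. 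The principal difficulty I anticipate is the presence of eigenvalue crossings, at which the individual spectral projections of $\tilde G$ are not analytic; Kato's parallel transport must then be formulated using analytic cluster projections (grouping coalescing eigenvalues), which enlarges $\mathcal A$ from the diagonal algebra to a block-diagonal one but otherwise leaves the argument intact.
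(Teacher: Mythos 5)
Your proof is correct, but it takes a genuinely different route from the paper's. The paper instead establishes the stronger Proposition~\ref{permutation diagonalization}, producing $U\in C^\omega(\R,\U(m))$ satisfying $U(x+1)=U(x)\Gamma$ for an explicit block-cyclic permutation matrix $\Gamma$; this is done combinatorially, by grouping the analytically continued eigenvalues into cycles, lifting the analytic eigenspaces $E_{m_j}$ to analytic orthonormal frames on suitable finite covers of $\T$ via Theorem~\ref{COMPLEX LIFT}, and assembling $U$ cycle by cycle, after which the present statement follows by taking $K=\prod_j r_j$. Your argument bypasses that machinery: after the same analytic-continuation-of-eigenvalues step, you invoke Kato's parallel transport (whose generator, being a permutation-invariant sum over the eigenprojections, is genuinely $1$-periodic), extract the Floquet monodromy $M$, note that $\tilde M=\tilde U_0^{-1}M^K\tilde U_0$ is a unitary congruence fixing $\tilde D$ and hence lies in the commutant $\mathcal A$ of $\tilde D$, and absorb it by the gauge $e^{iAx}$ using surjectivity of $\exp$ on the compact connected unitary group of a finite-dimensional $*$-algebra. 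This is shorter and more conceptual for Proposition~\ref{diagonalization} itself, but it sacrifices the explicit permutation monodromy: the paper needs precisely that structure (both $\Gamma$ and the DFT-type matrices $Q_r$ built from its cycle lengths) in the proof of Theorem~\ref{diagonalization-1}, so your approach would need further work to carry the rest of Section~\ref{adp}. One small inaccuracy in your closing caveat: for real-analytic $\Her(m,\C)$-valued families, Kato's theorem already yields analytic \emph{individual} eigenprojections through isolated crossings (cf.\ Lemma~\ref{Kato lemma}); clustering is only forced when eigenvalues coincide identically, in which case $\mathcal A$ is block-diagonal as you say, but isolated crossings are not an obstruction to the analyticity of the $P_{m_j}$.
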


This indicates that period doubling occurs in the analytic diagonalization of periodic Hermitian matrices. However, if we allow
 congruence transformation (then $U$ is not unitary anymore), 
then  $ U(\cdot) $ share the same periodicity as $ G(\cdot) $:

\begin{theorem}
    
\label{diagonalization-1}
Let \( G \in C^\omega(\mathbb{T}, \Her(m,\C) \cap \GL(m,\C)) \). Then there exists $ N \in C^\omega(\mathbb{T},$ $\GL(m,\C)), p,q \in \mathbb{N}_+ $ such that \( N(x)^*G(x)N(x)=\operatorname{diag}(I_p,-I_q). \)
\end{theorem}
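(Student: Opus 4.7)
The plan is to reduce, by an analytic $1$-periodic congruence, to the case where $G$ is a Hermitian involution, and then to assemble $N$ out of analytic $1$-periodic frames of its $\pm 1$-eigenbundles. The extra flexibility of $\GL(m,\C)$-congruences (compared to the $U(m)$-conjugations used in Proposition~\ref{diagonalization}) is what absorbs the monodromy that otherwise forces period doubling. For the reduction, note first that since $G(x)$ is invertible Hermitian on the connected space $\mathbb{T}$, its signature $(p,q)$ is constant. Functional calculus applied to the analytic $1$-periodic positive definite family $G^2$ yields an analytic $1$-periodic positive square root $|G|(x):=(G(x)^2)^{1/2}$; setting $M(x):=|G(x)|^{-1/2}$ and using that $|G|$ commutes with $G$ gives $M^{*}GM=P$, where $P(x):=G(x)|G(x)|^{-1}$ is an analytic $1$-periodic Hermitian involution of signature $(p,q)$. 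So we may assume $G=P$ with $P^2=I$.

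The orthogonal projections $E^{\pm}:=(I\pm P)/2$ are analytic $1$-periodic, with images $V^{\pm}(x)$ analytic subbundles of $\mathbb{T}\times\C^m$ of constant ranks $p$ and $q$. The crucial step is to find analytic $1$-periodic (not merely $K$-periodic) frames $(f_j^{\pm})$ of $V^{\pm}$. My plan is to observe that by analyticity and $1$-periodicity of $E^{\pm}$, the subbundles $V^{\pm}$ extend to holomorphic subbundles over an annular complexification $\mathcal{A}=\{|\operatorname{Im} z|<\delta\}/\Z$ of $\mathbb{T}$. Since $\mathcal{A}$ is a Stein open Riemann surface homotopy equivalent to $S^1$, and rank-$r$ complex vector bundles on $S^1$ are topologically trivial (as $\pi_0\GL(r,\C)=0$), the Oka--Grauert principle upgrades topological to holomorphic triviality and produces global holomorphic frames on $\mathcal{A}$, whose restrictions are analytic $1$-periodic frames $(f_j^{\pm})$ of $V^{\pm}$ on $\mathbb{T}$. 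This is the step where one genuinely departs from Proposition~\ref{diagonalization}: its $U(m)$-framework forces period doubling (as the counterexample preceding it shows), but the corresponding obstruction for $\GL(r,\C)$-frames vanishes because $\GL(r,\C)$ is connected.

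With such frames in hand, form $N_0(x)=(f_1^{+}(x)\,|\,\cdots\,|\,f_p^{+}(x)\,|\,f_1^{-}(x)\,|\,\cdots\,|\,f_q^{-}(x))$. Since $V^{+}\perp V^{-}$ as eigenspaces of the Hermitian $P$ for distinct eigenvalues, a direct computation gives $N_0^{*}PN_0=\operatorname{diag}(H^{+},-H^{-})$, where $H^{\pm}(x)=(\langle f_i^{\pm}(x),f_j^{\pm}(x)\rangle)_{i,j}$ are analytic $1$-periodic positive definite Gram matrices. Taking analytic positive square roots and setting
\[
N(x):=M(x)\,N_0(x)\,\operatorname{diag}\bigl((H^{+}(x))^{-1/2},\,(H^{-}(x))^{-1/2}\bigr)
\]
produces the desired $N\in C^\omega(\mathbb{T},\GL(m,\C))$ with $N^{*}GN=\operatorname{diag}(I_p,-I_q)$. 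The principal obstacle throughout is the frame-construction step of the second paragraph, i.e., the analytic $1$-periodic trivialization of the eigenbundles of $P$; the rest is routine functional calculus. An alternative, more hands-on route to the same trivialization would be to start from the $K$-periodic unitary frame supplied by Proposition~\ref{diagonalization}, view the resulting monodromy as a $U(p,q)$-valued cocycle, and solve the coboundary equation $R(x+1)=Q(x)^{-1}R(x)$ directly using the connectedness of $U(p,q)$; this amounts to the same underlying topological triviality.
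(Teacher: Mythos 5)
Your proof is correct and takes a genuinely different route from the paper's. The paper first invokes Proposition~\ref{permutation diagonalization} to diagonalize $G$ on $\R$ by a unitary conjugacy satisfying $U(x+1)=U(x)\Gamma$, with $\Gamma$ a product of cycles, and then forms $N=U\Lambda^{-1}Q^*M$ where $Q$ is an explicit Vandermonde/DFT-type matrix satisfying $Q(x+1)=Q(x)\Gamma$, so the cyclic monodromy cancels in $N$. You instead eliminate the monodromy at the source: congruating $G$ by $M=|G|^{-1/2}$ reduces to the Hermitian involution $P=G|G|^{-1}$, whose eigenvalues $\pm 1$ are constant on $\T$. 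Its eigenbundles $V^{\pm}$ are therefore $1$-periodic analytic subbundles with no eigenvalue-permutation monodromy at all, and the only remaining task is to equip them with $1$-periodic analytic frames, which you then correct to orthonormal ones via the Gram matrices $H^{\pm}$. For the frame step you do not need the full Oka--Grauert machinery: Theorem~\ref{COMPLEX LIFT} (cited in the paper from \cite{1D}) is precisely this statement, and invoking it would make your argument shorter and self-contained within the paper. One small inaccuracy in the motivation: the period doubling in Proposition~\ref{diagonalization} is not really an obstruction that ``vanishes because $\GL(r,\C)$ is connected'' --- $U(m)$ is also connected. It is forced because individual eigenvalue branches of $G$ may only be $K$-periodic (as in the $2\times 2$ example preceding that proposition), so the eigenspaces themselves fail to be $1$-periodic; your passage to $P$ removes exactly this obstruction, which is the real point. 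Overall your route is cleaner and more conceptual, trading the explicit $Q_r$ construction for the structural observation $G\mapsto G|G|^{-1}$.
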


Let's recall Sylvester's law of inertia, which states that the signature (the number of positive, negative, and zero eigenvalues) of a Hermitian matrix is invariant under congruence transformations. The proposition extends this to an analytic setting, demonstrating that an analytic family of invertible Hermitian matrices can be analytically congruent to a constant diagonal matrix with fixed signature 
$\operatorname{diag}(I_p,-I_q)$. That's how this subsection got the name. Note
 this proposition will play a critical role in the discussion of analytic block-diagonalization of Hermitian-symplectic cocycles.

\subsection{Difficulty, ideas of the proof}

One of our primary motivations is to analyze the dynamical behavior of symplectic (resp. Hermitian-symplectic) analytic quasiperiodic cocycle $(\alpha,A)$ when it is partially hyperbolic, i.e., $\mathbb{R}^{2d} = E^s(x) \oplus E^c(x) \oplus E^u(x)$ (resp. $\mathbb{C}^{2d} = E^s(x) \oplus E^c(x) \oplus E^u(x)$). Symplectic (resp. Hermitian-symplectic) quasiperiodic cocycle $(\alpha,A)$ appear quite naturally in the study of spectral theory of quasi-periodic Schr\"odinger operators, indeed, taking Aubry duality of Schr\"odinger operator induces a special symplectic (resp. Hermitian-symplectic) cocycle \cite{HP}, and symplecticity or Hermitian-symplecticity depends on the potential is even or not. We should emphasize partially hyperbolic cocycles plays the essential role in the study of quantitative version of Avila's global theory \cite{GJYZ}, and its spectral applications \cite{Ge-Jito-You,Ge-Jito}. Our whole proof include two parts: block-diagonalize the partially hyperbolic cocycles, get distinctness of the Lyapunov exponents.

\subsubsection{Analytic block-diagonalization of partially hyperbolic cocycle.}
In the symplectic context, Herman \cite{H} first observed that the continuous splitting of $\mathbb{R}^2$ associated with a real uniformly hyperbolic cocycle can be topologically non-trivial. Specifically, if $E(x) = \operatorname{span}_{\mathbb{R}}(u(x))$ denotes the stable direction, it is possible that $u(x+1) = -u(x)$. To describe this phenomenon, we introduce the following definition:
\begin{definition}
A function $u \in C^\omega(\mathbb{R},\mathbb{R}^d)$ is called an \textbf{antiperiodic} analytic real vector if  $u(x+1) = -u(x)$ for every $x \in \mathbb{R}$.
\end{definition}
In the symplectic case, our primary observation is that any real periodic analytic subspace possesses a real analytic basis in which all vectors are periodic, with the possible exception of one antiperiodic vector (as stated in Proposition \ref{REAL LIFT}). It is also worthwhile to compare Proposition \ref{DIAGNALIZATION LEMMA} with  Ge-Jitomirskaya-You \cite{Ge-Jito-You}, who only trivialize the central bundle. However, when aiming to block-diagonalize the cocycle, it is important to note that the phenomenon of antiperiodicity is unavoidable.

However, in the Hermitian - symplectic context, the Hermitian - symplectic analogue of Proposition \ref{REAL SYMPLECTIC LIFT} presents significantly more challenges. Hermitian - symplectic subspaces of $\C^{2d}$ are more complex than their symplectic counterparts in $\R^{2d}$.
One primary reason for this complexity is that, as shown by Harmer \cite{Harmer}, there exist Hermitian-symplectic subspaces for which it is impossible to identify a set of canonical basis (A set $\{v_1,\cdots,v_{n},v_{-1},\cdots,v_{-n}\} \subset \C^{2d}$ is called a canonical basis of the subspace spanned by it if it satisfies $v_{-j}^*Jv_{k}=\delta_{jk}$ for all $k$ and $j \geq 0$). In fact, the Krein matrices of Hermitian-symplectic subspaces with a canonical basis have a signature of zero (for more in-depth discussions, refer to Section \ref{abd}). This is precisely why the Analytic Sylvester Inertia Theorem (Section \ref{adp}) comes into play.

\subsubsection{Distinctness of the Lyapunov exponents.}To get simple Lyapunov spectrum, our proof hinges on several recent key advances of  Avila's gloabl theory of  one-frequency analytic $\SL(2,\R)$ cocycles \cite{Global,Aac,avila2,ARC}. To elaborate, once we have the analytical block-diagonalization result (Proposition \ref{DIAGNALIZATION LEMMA}), Theorem \ref{SIMPLE DENSITY} simplifies to whether, for a uniformly hyperbolic cocycle $(\alpha,A)$, i.e., there exist $T \in C^\omega(2\T,\Sp(4,\R))$, $\lambda \in C^\omega(\T,\R)$, $\Lambda \in \mathfrak{A}$ such that $$T(x+\alpha)^{-1}A(x)T(x) = \begin{pmatrix}
\lambda(x)\Lambda(x) & O \\
O & \lambda(x)^{-1}\Lambda(x)^{-\top}
\end{pmatrix},$$ we can perturb the cocycle $(\alpha,\Lambda) \in \R\backslash\Q \times \mathfrak{A}$ to have positive Lyapunov exponents. Here, $\mathfrak{A}$ is defined as $$\mathfrak{A} = \{\Lambda \in C^\omega(2\T,\SL(2,\R)): \Lambda(x+1) = \begin{pmatrix}
-1 & 0 \\
0 & 1
\end{pmatrix}\Lambda(x)\begin{pmatrix}
-1 & 0 \\
0 & 1
\end{pmatrix}\},$$ which is a submanifold of $C^\omega(2\T,\SL(2,\R))$.
As one will find, any $\Lambda \in  \mathfrak{A}$ have zero topological degree,  indicating that the traditional method \cite{JMD,DiSheng Xu}, technique based on monotonic perturbation \cite{AK} or Kotani's theory \cite{Ko, KoSi}, is not applicable here.

However, according to Avila's global theory \cite{Global},  zero Lyapunov cocycles are classified into two classes: critical cocycles ($L(A)=0, \omega(A)>0$) and subcritical ones ($L(A)=0, \omega(A)=0$). Moreover, as was shown by Avila \cite{Global}, critical cocycles are confined to a countable union of codimension-one analytic submanifolds within $C^{\omega}(\T,\SL(2,\R))$. Thus, if $(\alpha,\Lambda) \in \R\backslash\Q \times \mathfrak{A}$ is critical, the challenge shifts to determining whether critical cocycles are also rare in $\mathfrak{A}$, noting that $\mathfrak{A}$ is not open in $C^{\omega}(\T,\SL(2,\R))$.

If the dynamical system $(\alpha,\Lambda)$ is subcritical, the Almost Reducibility Theorem (ART) by Avila becomes a crucial tool. ART is a global result that asserts that a cocycle $(\alpha,\Lambda)$ is almost reducible if it is subcritical \cite{Global}. Recall that $(\alpha,A_1)$ is conjugated to $(\alpha,A_2)$, if there exists $B\in C^\omega(\T,\mathrm{PSL}(2,\R))$
such that 
$$B^{-1}(x+\alpha)A_1(x)B(x)=A_2(x).$$ 
Then $(\alpha,A)$ is almost reducible if the closure of its analytic conjugate class contains the constant. 
This theorem was initially announced in \cite{Global} and subsequently proven in \cite{Aac,avila2}, consult also another proof for the Sch\"odinger case \cite{ge}.
Lemma \ref{zrho} establishes that the fibered rotation number of the cocycle is rational with respect to the frequency $\alpha$. ART then essentially implies that $(\alpha,\Lambda)$ is situated at the boundary of the set of uniformly hyperbolic cocycles\footnote {In the context of Schr\"odinger operators, this translates to $(\alpha,\Lambda)$ being at the boundary of the spectral gaps.}. Thus, the primary challenge shifts to understanding how to perturbate the cocycles while preserving their group structure of $\mathfrak{A}$.
The situation becomes more complicated when the frequency is Liouvillean. In such cases, $(\alpha,\Lambda)$ is only almost reducible. It's important to note that if the frequency $\alpha$ is Diophantine, then $(\alpha,\Lambda)$ is indeed reducible. In these scenarios, quantitative estimates of almost reducibility become essential. These estimates are pivotal in addressing the non-critical aspect of the Dry Ten Martini Problem, as discussed in \cite{ARC}.

\section{Preliminaries}

\subsection{Continued fraction expansion} Let $\alpha\in \R \backslash\Q$, $a_0:=0$ and $\alpha_0:=\alpha$. Inductively, for $k\geq 1$, we define
$$
a_k:=[\alpha_{k-1}^{-1}], \  \ \alpha_k=\alpha_{k-1}^{-1}-a_k.
$$
Let $p_0:=0$, $p_1:=1$, $q_0:=1$, $q_1:=a_1$. Again inductively, set
$p_k:=a_kp_{k-1}+p_{k-2}$, $q_k:=a_kq_{k-1}+q_{k-2}$. Then $q_n$ are
the  denominators of the best rational approximamts of $\alpha,$ since
we have $\|k\alpha\|_{\R/\Z}\geq \|q_{n-1}\alpha\|_{\R/\Z}$ for all
$k$ satisfying $1\leq k< q_n$. We also have 
$$
\frac{1}{2q_{n+1}}\leq \|q_n\alpha\|_{\R/\Z}\leq \frac{1}{q_{n+1}}.
$$

\subsection{ Fibered rotation number}

Assume now $ A\in C^\omega(\T,\mathrm{\SL}(2,\R)) $ is homotopic to the identity, 
then the continuous
map
\begin{eqnarray*}    
F : \T \times \mathbb{S}^1  &\rightarrow& \T \times \mathbb{S}^1,\\
(x,v) &\mapsto&
(x + \alpha,
\frac{A(x)v}{\|A(x)v\|})
\end{eqnarray*}
is also homotopic to the identity, thus it admits a continuous lift
$\widetilde{F} : \T \times \R \rightarrow \T \times \R $
of the form
$F(x,t) = (x + \alpha,t + f(x,t))$ with
$f \in C(\T \times \R, \R)$ and $
f(x,t + 1) = f(x,t)$.
Then the \textit{fibered rotation number} of $ (\alpha,A) $ is defined by
$$\rho(A):=\lim_{N \rightarrow \infty}\frac{1}{N}\sum_{n=0}^{N-1}f(\widetilde{F}^n(x,t)) \mod \Z$$
which does not depend on the choices of  $(x,t) \in \T \times \R$ and the lift $\widetilde{F}$ \cite{johonson and moser,H}.

\subsection{Uniform hyperbolicity and dominated splitting}\label{3.3}

Let $\F=\R$ or $\C$,  and we say that  $(\alpha,A)$ is $k$-dominated for some $1 \leq k \leq d-1$, if there exists a dominated decomposition $\F^d=E^+(x) \oplus E^-(x) $ with $\dim E^+=k$. If $\alpha\in \R \backslash\Q$, then it follows from the definitions that the Oseledets splitting is dominated if and only 
if $(\alpha,A)$ is $k$-dominated for each $k$ such that $L_k(\alpha, A)>L_{k+1}(\alpha, A)$.
The next  result gives the basic relation between regularity and domination:
\begin{theorem}\cite{1D}\label{DOMINATE=REGULAR}
    If $1 \leq k \leq d-1$ is such that $L_k(\alpha, A)>L_{k+1}(\alpha, A)$ then $(\alpha, A)$ is $k$-regular if and only if $(\alpha, A)$ is $k$-dominated.
\end{theorem}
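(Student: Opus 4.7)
The plan is to reduce the statement to the case $k=1$ via the exterior power cocycle and then handle the two implications in turn, following the strategy of \cite{1D}.

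\textbf{Step 1 (Reduction to $k=1$).} Put $B := \Lambda^k A \in C^\omega(\T, \GL(N,\C))$ with $N = \binom{d}{k}$. The Lyapunov exponents of $B$ are the sums $L_{i_1}(A)+\cdots+L_{i_k}(A)$ over $1\le i_1<\cdots<i_k\le d$. The two largest among these are $L^k(A)$ and $L^{k-1}(A)+L_{k+1}(A)$, and their difference is precisely the hypothesized gap $L_k(A)-L_{k+1}(A)>0$. Moreover, $k$-regularity of $A$ is $1$-regularity of $B$, while a $k$-dominated splitting $\F^d=E^+\oplus E^-$ of $A$ with $\dim E^+=k$ corresponds bijectively to a $1$-dominated splitting of $B$ via $E^+\mapsto \Lambda^k E^+$ (uniqueness of the top dominated line of $B$ combined with $\Lambda^k A$-invariance of the Pl\"ucker embedding forces this line to be decomposable). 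Hence it suffices to prove the equivalence for $k=1$.

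\textbf{Step 2 (Dominated implies regular).} Being $1$-dominated is open in the $C^0$-topology on cocycles; since $y\mapsto B(\cdot+iy)$ is holomorphic, the splitting $\C^N = E_y^+ \oplus E_y^-$ persists for $|y|<\delta$ and $E_y^+(x)$ is jointly holomorphic in $(x,y)$. On this line bundle $B(\cdot+iy)$ acts as a nowhere-vanishing holomorphic multiplier $\phi_y(x)$, so
\[
L^1\bigl(B(\cdot+iy)\bigr)\;=\;\int_\T \log|\phi_y(x)|\,dx.
\]
Since $\log|\phi_y(x)|$ is pluriharmonic in $(x,y)$, the right-hand side is harmonic, hence affine, in the real variable $y$.

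\textbf{Step 3 (Regular implies dominated).} For $0<|y|<\delta$, upper-semicontinuity of the Lyapunov spectrum preserves the gap $L_1(B)>L_2(B)$, so for each such $y$ there is a continuous invariant line bundle $E_y^+(x)$, obtained as the uniform-in-$x$ limit in $\mathbb{P}^{N-1}$ of the top right singular direction of $B_n(\cdot+iy)$. These bundles vary holomorphically in $y$ on each half-strip $\{0<y<\delta\}$ and $\{-\delta<y<0\}$. The $1$-regularity hypothesis makes the one-sided slopes of $L^1(\cdot+iy)$ at $y=0$ coincide, which by the quantization of acceleration rules out Avila's \emph{critical} behavior and, combined with the spectral gap, forces the two holomorphic families $y\mapsto E_y^+$ to extend continuously across $y=0$ to a common limit $E_0^+(x)$. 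The resulting continuous invariant line bundle, together with $L_1(B)>L_2(B)$, yields the desired $1$-dominated splitting.

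\textbf{Main obstacle.} The hard direction is Step~3, specifically the continuous extension to $y=0$. A priori only a measurable Oseledets line is available there, and continuity must be harvested from regularity via complex-analytic boundary-value arguments. This is where the convexity of $y\mapsto L^1(B(\cdot+iy))$ and the quantization of acceleration work in concert: matched one-sided slopes at $y=0$ prevent the holomorphic bundle from developing a discontinuity at the real axis, which is precisely the ``critical'' obstruction excluded by the regularity hypothesis.
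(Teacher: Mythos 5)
This theorem is quoted by the paper from Avila--Jitomirskaya--Sadel \cite{1D}; the paper does not reprove it, so there is no ``paper proof'' to line up against, and the comparison has to be with the argument in \cite{1D} itself.

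Your Steps~1 and~2 are sound. The reduction to $k=1$ via $B=\Lambda^kA$ is correct: $L^1(B(\cdot+iy))=L^k(A(\cdot+iy))$, the spectral gap of $B$ is $L_k-L_{k+1}$, and the bijection of $1$-dominated lines of $B$ with $k$-dominated planes of $A$ works because the top Oseledets line of $B$ is decomposable almost everywhere and decomposability is a closed condition, so Pl\"ucker lets you pull the continuous line back to a continuous $k$-plane. The easy implication (dominated $\Rightarrow$ regular) via the periodic holomorphic multiplier $\phi$ and the identity $\partial_y^2\int_0^1\log|\phi(x+iy)|\,dx=-\int_0^1\partial_x^2\log|\phi(x+iy)|\,dx=0$ is exactly the \cite{1D} argument.

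Step~3, however, asserts the conclusion rather than proving it, and this is the whole content of the hard direction. Two specific problems. First, you take for granted that for $0<|y|<\delta$ the top singular direction of $B_n(\cdot+iy)$ converges \emph{uniformly} in $x$ to a continuous invariant line $E^+_y$. That is itself a nontrivial theorem of \cite{1D} (essentially the combination of the statement that $(\alpha,A(\cdot+it))$ is $k$-regular for all small $t\neq 0$ with the very equivalence you are proving, so a direct appeal risks circularity; \cite{1D} establish domination off the real axis by a separate subharmonicity/quantization argument, not by quoting this theorem). Second, and more seriously, the sentence ``matched one-sided slopes $\dots$ forces the two holomorphic families $y\mapsto E^+_y$ to extend continuously across $y=0$'' is exactly the statement that needs proof, and you offer no mechanism for it. A holomorphic family of lines on $\{0<y<\delta\}$ and another on $\{-\delta<y<0\}$ do not automatically glue just because a scalar invariant ($L^1$) has matched one-sided slopes; \cite{1D} extract this from a quantitative analysis of $\tfrac1n\log\sigma_j(B_n(x+iy))$ near $y=0$, in which the equality of slopes translates into uniform control of the singular-value gap as $y\to 0$, and that uniform gap is what yields the continuous dominated splitting on the real axis. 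Nothing in your sketch substitutes for that estimate. Incidentally, invoking ``Avila's critical behavior'' here is a category error: criticality (zero Lyapunov exponent with positive acceleration) is an $\SL(2,\R)$ notion and plays no role in the $\GL(m,\C)$ statement; what regularity rules out is a jump in the slope of $y\mapsto L^1(B(\cdot+iy))$, which is a different (and weaker) statement than non-criticality.
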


We also recall $(\alpha,A)$  is uniformly hyperbolic, if there exists a continuous   decomposition $\F^d=E^+(x) \oplus E^-(x) $, such that 
$$
\begin{aligned}
\| A_n(x)v\| \leqslant Ce^{-cn}\| v\|, \quad & v\in E^-(x),\\
\| A_n(x)^{-1}v\| \leqslant Ce^{-cn}\| v\|,  \quad & v\in E^+(x+n\alpha).
\end{aligned}
$$ 
for some constants $C>0,c>0$. This splitting is  invariant by the dynamics, i.e. 
$
A(x)E^{\pm}(x)=E^{\pm}(x+\alpha).
$

 If $G$ is a subgroup of $\F^{m \times m}$, let $\mathcal{D O} _k(G)$ denote the set of $k-$dominated $C^\omega(\T,G)$-cocycles. Thus for the case $G=\Sp(2d,\R)$ or $G=\HSp(2d)$, the  cocycle $(\alpha, A)$ is {\it uniformly hyperbolic} if and only if it is $d$-dominated. The set of uniformly hyperbolic cocycles is denoted by $\mathcal{UH}(G)$, which is an open set of $C^\omega(\T,G)$ in the $C^\omega$-topology. Moreover,  the dominated decomposition can be proven depending analytically on $A \in \mathcal{D O} _k(G)$. Denote $Gr(n,\C^d)$ the the Grassmannian
of $n$-dimensional subspaces of $\C^d$, then we have the following:
 
\begin{theorem}\cite{1D}\label{HOLOMORPHIC SECTION}
Let $G$ is a subgroup of $\C^{d\times d}$. Denote the unstable and stable subspace of $(\alpha,A)$ by $E^+_A(x)$ and $E^-_A(x)$ correspondingly.
\begin{itemize}
    \item For any $x \in \T$, two maps $A \mapsto E^+_A(x)$ and $A \mapsto E^-_A(x)$ is a holomorphic function of $A \in \mathcal{D O}_k(G)$.
    \item Moreover, let $(\alpha, A(\cdot+i t))$ is $k$-dominated for $t \in\left(t_{-}, t_{+}\right)$, then $z \mapsto E^{+}_{A}(z) \in C^\omega(\T,Gr(k,\C^{d}))$, and  $z \mapsto E^{-}_{A}(z) \in C^\omega(\T,Gr(d-k,\C^{d}))$ are holomorphic for $z=x+it$, $t \in\left(t_{-}, t_{+}\right)$.
\end{itemize}
\end{theorem}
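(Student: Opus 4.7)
The plan is to realize $E^+_A(x)$ as the unique attracting fixed point of a graph-transform contraction whose data depend holomorphically on $A$, so that the fixed point inherits holomorphic dependence via the parameterized contraction mapping principle (equivalently, the holomorphic implicit function theorem in Banach spaces). The second bullet is then obtained by combining the first bullet, applied to the holomorphic family $t \mapsto A(\cdot+it)$, with a Hartogs-type separate analyticity argument. The treatment of $E^-$ follows by reversing time.

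For the first bullet, fix $A_0 \in \mathcal{DO}_k(G)$. By the cone-field characterization of $k$-domination, there exist $N \geq 1$ and a continuous field $x \mapsto \mathcal{C}(x)$ of open neighborhoods of $E^+_{A_0}(x)$ in $Gr(k,\C^d)$ such that $A_0^N(x-N\alpha)\,\mathcal{C}(x-N\alpha) \Subset \mathcal{C}(x)$, with a uniform contraction estimate in the Fubini--Study distance. Choosing local holomorphic affine charts on $Gr(k,\C^d)$ centered at the values $E^+_{A_0}(x)$ identifies these cone fields with the unit ball of a Banach space $\mathcal{B}$ of continuous sections of a fixed $k(d-k)$-dimensional bundle. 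On this neighborhood of zero, the graph transform
\[
\Phi_A(V)(x) \;=\; A(x-\alpha)\cdot V(x-\alpha)
\]
is well-defined, and $\Phi_A^N$ is a strict contraction whose fixed point is precisely $x\mapsto E^+_A(x)$. Since $A \mapsto \Phi_A$ is manifestly holomorphic (it is affine in $A$ after the chart identification) and the contraction rate and invariant neighborhood persist uniformly on a complex $C^\omega$-neighborhood of $A_0$, the Banach fixed point theorem with holomorphic parameters yields a unique fixed point in $\mathcal{B}$ depending holomorphically on $A$. Point-evaluation at $x \in \T$ gives the first bullet.

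For the second bullet, the hypothesis that $(\alpha, A(\cdot+it))$ is $k$-dominated for $t\in(t_-,t_+)$ means precisely that $t \mapsto A(\cdot+it)$ is a holomorphic curve in $\mathcal{DO}_k(G)$, so by the first bullet $t \mapsto E^+_{A(\cdot+it)}(x)$ is holomorphic in $t$ for each fixed $x$. The identity $(A(\cdot+it))^n(x) = A^n(x+it)$ together with uniqueness of the unstable direction forces $E^+_{A(\cdot+it)}(x) = E^+_A(x+it)$, so along every vertical slice $z\mapsto E^+_A(z)$ is holomorphic. Combined with the real-analyticity in $x$ for each fixed $t$ (from the first bullet, the fixed-point section lies in $C^\omega(\T,Gr(k,\C^d))$), Hartogs' theorem on separate analyticity upgrades this to joint holomorphicity on the strip $\{x+it : x\in\T,\, t\in(t_-,t_+)\}$.

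The main technical obstacle is the \emph{uniform} contraction estimate needed to make the parameterized fixed-point argument go through: one must verify that a single $N$, a single invariant neighborhood in $\mathcal{B}$, and a uniform contraction rate work throughout a full complex $C^\omega$-neighborhood of $A_0$, not merely for real perturbations. This reduces to the fact that $k$-domination is characterized by a strict open inequality on an invariant cone field, so it is an open condition in the complex $C^\omega$-topology on $C^\omega(\T,\C^{d\times d})$; standard perturbation arguments for cone-field hyperbolicity then provide the required uniformity. Once this is in place, the holomorphic dependence is a routine consequence of the holomorphic implicit function theorem.
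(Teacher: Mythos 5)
The paper cites this statement verbatim from Avila--Jitomirskaya--Sadel \cite{1D} and does not include its own proof, so there is no internal argument to compare against; I will instead assess whether your proposal is sound.

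Your overall strategy (realize $E^+$ as the fixed point of a graph transform and invoke holomorphic parameter dependence for contractions, then propagate to the strip via the shift family) is essentially the approach of \cite{1D} and is correct in outline, with two points worth tightening. First, a small slip in the first bullet: after choosing an affine chart on $Gr(k,\C^d)$ adapted to the splitting $E^+_{A_0}\oplus E^-_{A_0}$ and representing a nearby plane as the graph of $L$, the graph transform $\Phi_A$ is not affine in $A$ --- it is the M\"obius-type map $L\mapsto (c+dL)(a+bL)^{-1}$ with $A=\begin{pmatrix}a&b\\ c&d\end{pmatrix}$ in the split coordinates. That it is holomorphic in $(A,L)$ is what you actually need, and that part is fine; just delete ``affine.''

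Second, the Hartogs step at the end is both unnecessary and not the right tool here: Hartogs' separate-analyticity theorem concerns functions of several complex variables, whereas you have a function of a single complex variable $z=x+it$, and ``holomorphic in $t$, real-analytic in $x$'' is not a hypothesis of any Hartogs-type theorem. Fortunately nothing is lost, because the complexification of the $t$-variable already gives you joint holomorphy for free. Indeed, once you know $w\mapsto A(\cdot+iw)$ is a holomorphic map from a complex neighbourhood of $(t_-,t_+)$ into $\mathcal{DO}_k(G)$ (writing $w=t+is$, the cocycle $A(\cdot+iw)=A(\cdot-s+it)$ is a real translate of $A(\cdot+it)$, hence still $k$-dominated), composing with the first bullet yields that $w\mapsto E^+_{A(\cdot+iw)}(x_0)$ is holomorphic. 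The translation identity for real shifts gives
\[
E^+_{A(\cdot+iw)}(x_0)\;=\;E^+_{A(\cdot+it)}(x_0-s)\;=\;E^+_A\bigl((x_0-s)+it\bigr)\;=\;E^+_A(x_0+iw),
\]
so this \emph{same} holomorphic map already traces out $E^+_A$ on a full two-real-dimensional neighbourhood of $x_0+it_0$ in the strip; composing with the affine change of variable $z\mapsto -i(z-x_0)$ gives that $z\mapsto E^+_A(z)$ is holomorphic near $x_0+it_0$. In particular the ``vertical slice plus separate analyticity'' framing should be replaced by this observation. With those two corrections the argument is sound and is the same route as in \cite{1D}.
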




\section{Analytic Block diagonalization}

The goal of this section is to analytically block diagonalize one frequency quasiperiodic partially hyperbolic cocycles. We start with the following  quite simple observation:
\begin{lemma}
    Let $\alpha\in \R \backslash\Q$, $G=\Sp(2d,\R)$ or $G=\HSp(2d)$, then $\mathcal{DO}_n(G)=\mathcal{DO}_{2d-n}(G)$.
\end{lemma}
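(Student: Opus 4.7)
The plan is to reduce both $n$-domination and $(2d-n)$-domination to the same quantitative condition on singular-value ratios, so that the two notions become literally identical. The key structural input is that for any (Hermitian-)symplectic matrix the singular values come in reciprocal pairs. Concretely, for $B\in\Sp(2d,\R)$ the identity $B^{\top}JB=J$ yields $B^{-1}=J^{-1}B^{\top}J$, i.e.\ $B^{-1}$ and $B^{\top}$ are conjugate through the orthogonal matrix $J$ (orthogonal because $J^{\top}=-J=J^{-1}$). Orthogonal conjugation preserves singular values, hence $\sigma_{i}(B^{-1})=\sigma_{i}(B^{\top})=\sigma_{i}(B)$; combined with the elementary identity $\sigma_{i}(B^{-1})=\sigma_{2d+1-i}(B)^{-1}$ this gives the symplectic singular-value symmetry
\[
\sigma_{i}(B)\,\sigma_{2d+1-i}(B)=1,\qquad 1\leq i\leq 2d.
\]
The Hermitian-symplectic case $G=\HSp(2d)$ is identical after replacing $\top$ by $*$ and ``orthogonal'' by ``unitary''.

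I would then invoke the standard singular-value characterization of domination that underlies Theorem \ref{DOMINATE=REGULAR} (a Bochi--Gourmelon type criterion): $(\alpha,A)\in\mathcal{DO}_{k}(G)$ iff there exist $C,\lambda>1$ with
\[
\frac{\sigma_{k}(A^{m}(x))}{\sigma_{k+1}(A^{m}(x))}\geq C\lambda^{m}
\]
uniformly in $x\in\T$ and $m\geq 0$. Because each iterate $A^{m}(x)$ again lies in $G$, the symmetry above forces
\[
\frac{\sigma_{n}(A^{m}(x))}{\sigma_{n+1}(A^{m}(x))} \;=\; \frac{\sigma_{2d-n}(A^{m}(x))}{\sigma_{2d-n+1}(A^{m}(x))},
\]
so the $n$-th and $(2d-n)$-th singular-value gaps of $A^{m}(x)$ coincide term by term, and one grows uniformly exponentially in $m$ iff the other does. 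This immediately yields $\mathcal{DO}_{n}(G)=\mathcal{DO}_{2d-n}(G)$.

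I do not anticipate any real obstacle: the entire content of the lemma is the pairing of singular values forced by the (Hermitian-)symplectic structure, and once the singular-value criterion for domination is taken as input the conclusion is a one-line consequence of that pairing. The statement is aptly described as a ``quite simple observation'' and is included mainly so that later, when block-diagonalizing partially hyperbolic cocycles, one may freely switch between complementary dimensions of the dominated splitting.
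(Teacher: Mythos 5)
Your proposal is correct, and it takes a genuinely different route from the paper's. The paper proves the lemma by passing through Theorem \ref{DOMINATE=REGULAR}: it observes that the symmetry of the Lyapunov spectrum of the complexified cocycle forces $L^n(A(\cdot+iy))=L^{2d-n}(A(\cdot+iy))$ in the $\Sp(2d,\R)$ case (and $L^n(A(\cdot+iy))=L^{2d-n}(A(\cdot-iy))$ in the $\HSp(2d)$ case, since $A(x-iy)^*JA(x+iy)=J$ while $A(\cdot+iy)$ itself leaves the group for $y\neq0$), so $n$-regularity is equivalent to $(2d-n)$-regularity, and then cites the Avila--Jitomirskaya--Sadel equivalence of regularity and domination. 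You instead bypass regularity entirely: you use the reciprocal pairing $\sigma_i(B)\sigma_{2d+1-i}(B)=1$ for $B\in G$ (valid for each iterate $A^m(x)$ on the \emph{real} circle, so no complexification is needed) together with the Bochi--Gourmelon singular-value criterion for domination to get $\sigma_n/\sigma_{n+1}=\sigma_{2d-n}/\sigma_{2d-n+1}$ termwise. Your route is more elementary and self-contained; it does not lean on the complex-analytic Theorem \ref{DOMINATE=REGULAR}, sidesteps the mild awkwardness that $\HSp(2d)$ is not preserved under complexification, and in fact works over any compact base, not only irrational rotations. The trade-off is that you quietly import the Bochi--Gourmelon characterization of domination, which is not stated in the paper, whereas the paper's argument stays within the regularity framework it has already set up and will use repeatedly. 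One cosmetic remark: the singular-value criterion is usually stated with $C>0$ (not $C>1$) and $\lambda>1$; this does not affect anything.
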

\begin{proof}
    If $G=\Sp(2d,\R)$, by symmetry, we have $L_n(A(\cdot+iy))=-L_{2d-n}(A(\cdot+iy))$. If $G=\HSp(2d)$, as $A(x-iy)^*JA(x+iy)$  is a holomorphic function with respect to $x+iy\in \T_\delta$ which equals to $J$ whenever $y=0$, thus for any $y \neq 0$ it holds that $A(x-iy)^*JA(x+iy)=J$, it follows that $L_n(A(\cdot+iy))=-L_{2d-n}(A(\cdot-iy))$.  By Theorem \ref{DOMINATE=REGULAR}, the result follows. 
\end{proof}

Thus, if $(\alpha,A)$ is $n$-dominated,  
then we actually get a dominated splitting 
$E^u\oplus E^c\oplus E^s, \dim E^s=\dim E^u=n$ 
such that $E^s\oplus E^c$ is the stable bundle and $E^u$ is the unstable bundle with respect to $n$-dominated splitting, while $E^u\oplus E^c$ is the unstable bundle and $E^s$ is the stable distribution with respect to $(2d-n)-$dominated splitting. Moreover, we have the following:
\begin{lemma}\label{U,S,C symplectic properties}
Let $\alpha\in \R\backslash \Q$, $A\in C^{\omega}(\T, G)$, where \( G=\Sp(2d,\mathbb{R}) \) or \( G=\HSp(2d) \).  
If $(\alpha, A)\in\mathcal{DO}_n(G)$, then we have the following:
\begin{enumerate}

    \item $E^u(x)\oplus E^s(x)$ and $E^c(x)$ are symplectic-orthogonal (or hermitian-symplectic-orthogonal) to each other.
     \item $E^u(x)\oplus E^s(x)$ and $E^c(x)$ are symplectic subspaces (or hermitian-symplectic subspaces). 
      \item $E^u(x)$ and $E^s(x)$ are isotropic subspaces (or hermitian-isotropic subspaces).
\end{enumerate}
\end{lemma}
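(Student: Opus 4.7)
The plan is to derive all three items from two ingredients: the $A$-invariance of the (Hermitian-)symplectic form $\omega(v,w) := \langle v, Jw\rangle$, i.e.\ $\omega(A(x)v, A(x)w) = \omega(v,w)$, and uniform exponential bounds on each piece of the dominated splitting. The preceding lemma gives that $(\alpha,A)$ is simultaneously $n$- and $(2d-n)$-dominated, so the standard theory of dominated splittings furnishes, for any $\epsilon > 0$, uniform constants $C>0$ with
\[\|A^n(x)|_{E^s}\| \leq C e^{(L^s_+ + \epsilon)n}, \qquad \|A^n(x)|_{E^c}\| \leq C e^{(L^c_+ + \epsilon)n},\]
and analogous backward bounds for $E^u$ and $E^c$; here $L^s_+, L^c_\pm, L^u_-$ denote the extreme Lyapunov exponents on the respective pieces. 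The symmetry of the Lyapunov spectrum of a $G$-cocycle --- coming from $A^{-\top} = JAJ^{-1}$ in the symplectic case and from $\sigma_k(A^n)\sigma_{2d+1-k}(A^n) = 1$ (a consequence of $A^*JA=J$) in the Hermitian case --- yields $L^s_+ = -L^u_-$ and $L^c_+ = -L^c_-$; combined with the strict domination gap $L^u_- > L^c_+$ a short check then gives $L^u_- > 0 > L^s_+$ and the crucial inequality $L^s_+ + L^c_+ < 0$.

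For items (1) and (3), I would invoke invariance: for any $v,w$ in $A$-invariant bundles at $x$,
\[|\omega(v,w)| = |\omega(A^n(x)v, A^n(x)w)| \leq \|J\|\,\|A^n(x)v\|\,\|A^n(x)w\|.\]
Taking $v \in E^s(x)$ and $w \in E^s(x) \oplus E^c(x)$, the right-hand side is bounded by a constant multiple of $e^{(L^s_+ + L^c_+ + 2\epsilon)n}$, which tends to zero; hence $\omega(v,w) = 0$, so $E^s$ is isotropic and $\omega$-orthogonal to $E^c$. The same argument applied to $A^{-n}$ (which also preserves $\omega$) with $v \in E^u(x)$ and $w \in E^u(x) \oplus E^c(x)$ yields the isotropy of $E^u$ and its $\omega$-orthogonality to $E^c$. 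Together these give (1) and (3).

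For item (2) I would use (1) together with the non-degeneracy of $\omega$ on $\F^{2d}$: any $w \in E^c(x)$ with $\omega(w, E^c(x)) = 0$ is by (1) also $\omega$-orthogonal to $E^u(x) \oplus E^s(x)$, hence to the whole space, forcing $w = 0$; this shows $\omega$ is non-degenerate on $E^c$, and the mirror argument applied to $v \in E^u(x) \oplus E^s(x)$ finishes the proof. I expect the only non-routine step to be justifying the uniform exponential bounds on each dominated piece, which I would handle via the standard adapted-norm construction for dominated splittings combined with Theorem \ref{DOMINATE=REGULAR} to identify the rates with Lyapunov exponents; the true content of the lemma is the observation that (Hermitian-)symplectic symmetry is exactly what is needed to force $L^s_+ + L^c_+ < 0$, even when the central bundle has non-trivial growth.
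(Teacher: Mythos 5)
Your proof is correct and follows essentially the same strategy as the paper's: both exploit the invariance $\omega(v,w) = \omega(A_k(x)v, A_k(x)w)$, bound $|\omega(v,w)|$ by a product of exponential growth rates on the respective dominated pieces, use the symplectic symmetry of the spectrum to make the sum of rates strictly negative, and deduce (2) from (1) via a dimension/non-degeneracy count. The paper parametrizes the rates by constants $l_1, l_2$ chosen inside the gap $L_{n+1} < l_1 < l_2 < L_n$ rather than your $L^s_+, L^c_+$ notation, but the estimates are the same.
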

\begin{proof}
To simplify our analysis, we focus on the case $ G = \HSp(2d) $. Let $(\alpha, A) \in \mathcal{DO}_n(G)$ be $n$-dominated, and assume $L_{n+1}(A) < l_1 < l_2 < L_n(A)$. By definition, consider $v^c_x \in E^c(x)$, $v^u_x \in E^u(x)$, and $v^s_x \in E^s(x)$. There exist constants $C_c, C_s > 0$ such that:
$$
\|A_k(x)v^c_x\| < C_c e^{kl_1} \quad \text{and} \quad \|A_k(x)v^s_x\| < C_s e^{-kl_2} \text{ for } k \in \mathbb{N}_{+}.
$$
Since $A_k(x) \in \HSp(2d)$, we have the relation:
$$
{v_x^c}^*Jv_x^s = {v_x^s}^*A_k(x)^*JA_k(x)v_x^s.
$$
Consequently, as $k \rightarrow +\infty$, we observe that:
$$
\begin{aligned}
\|{v_x^c}^*Jv_x^s\| &= \|{v_x^c}^*A_k(x)^*JA_k(x)v_x^s\| 
\leq \|(A_k(x))v_x^c\| \|(A_k(x))v_x^s\| \leq C_c C_s e^{-k(l_1-l_2)} \rightarrow 0.
\end{aligned}
$$
Therefore, $\|{v_x^c}^*Jv_x^s\| = 0$ implies that $E^c$ is (hermitian-)symplectic-orthogonal to $E^s$.

Using a similar argument, by letting $k \rightarrow -\infty$, we find that:
$
{v_x^c}^*Jv_x^u = 0,
$
which shows that $E^c$ is (hermitian-)symplectic-orthogonal to $E^u$. Consequently, $E^c$ and $E^u \oplus E^s$ are also (hermitian-)symplectic-orthogonal. Given that
$\operatorname{dim}E^c(x) + \operatorname{dim}(E^u(x) \oplus E^s(x)) = 2d$,
it follows that $E^c$ and $E^u \oplus E^s$ are symplectic complements of each other, thus establishing that both are (hermitian-)symplectic subspaces.

By the same reasoning, we have:
$$
\|{v_x^s}^*Jv_x^s\| \leq \|(A_k(x))v_x^s\| \|(A_k(x))v_x^s\| \rightarrow 0 \quad (k \rightarrow +\infty),
$$
and
$$
\|{v_x^u}^*Jv_x^u\| \leq \|(A_k(x))v_x^u\| \|(A_k(x))v_x^u\| \rightarrow 0 \quad (k \rightarrow -\infty).
$$
Thus, we conclude that $E^s(x)$ and $E^u(x)$ are hermitian-isotropic subspaces of $\mathbb{C}^{2d}$.

\end{proof}

In the context of the real symplectic case, as discussed in the introduction, the bundles $ E^s $ and $ E^u $ can exhibit topological non-triviality. It is intuitive to show that for any $ E \in C^\omega(\mathbb{T}, Gr(1, \mathbb{R}^d)) $, there exists a vector $ u \in C^\omega(\mathbb{R}, \mathbb{R}^d) $ that is either periodic or antiperiodic, such that $ E(x) = \operatorname{span}_{\mathbb{R}}(u(x)) $. Consequently, we can identify $ E(x) $ with $ u(x)$, that is,  we define the set
$$
\{u \in C^\omega(2\mathbb{T}, \mathbb{R}^d): u(x+1) = -u(x)\} \cup C^\omega(\mathbb{T}, \mathbb{R}^d)
$$
as $ C^\omega(\mathbb{T}, \mathbb{PR}^d) $.
Furthermore, we introduce a function
$$
\tau: C^\omega(\mathbb{T}, \mathbb{PR}^d) \rightarrow \{1, -1\}
$$
defined by the equation
$
u(1) = \tau(u) u(0).
$
For any vector $ u \in C^\omega(\mathbb{T}, \mathbb{PR}^d) $, the vector $ u $ is 1-periodic if and only if $ \tau(u) = 1 $, and antiperiodic if and only if $ \tau(u) = -1 $.
Additionally, for $ 1 \leq n < d $, the Grassmannian $ Gr(n, \mathbb{R}^d) $ can be characterized as
$$
Gr(n, \mathbb{R}^d) = \{\operatorname{span}_{\mathbb{R}}(v_1, \ldots, v_n): v_1, \ldots, v_n \in \mathbb{R}^d\},
$$
which can also be represented as the set of wedge products:
$$
\{\operatorname{span}_{\mathbb{R}}(v_1 \wedge \cdots \wedge v_n): v_1, \ldots, v_n \in \mathbb{R}^d\}.
$$
This representation forms a closed submanifold of $ \mathbb{PR}^{C_d^n} $. Therefore, we can naturally regard $ E \in C^\omega(\mathbb{T}, Gr(n,\mathbb{R}^d)) $ as an element of $ C^\omega(\mathbb{T}, \mathbb{PR}^{C_d^n}) $, which in turn ensures that $ \tau(E) $ is also well-defined.

To state precisely the result, we recall the direct sums of (hermitian)-symplectic groups. Let 
 $$S_1= \begin{pmatrix}
    A_1 & A_2 \\
    A_3 & A_4 \\
\end{pmatrix} \in \Sp(2n_1,\R) \qquad \text{and} \qquad S_2= \begin{pmatrix}
    B_1 & B_2 \\
    B_3 & B_4 \\
\end{pmatrix}\in \Sp(2n_2,\R),$$ then
      $$S_1 \diamond S_2=\begin{pmatrix}
    A_1 & O & A_2 & O \\
     O & B_1  & O & B_2 \\
     A_3 & O & A_4 & O \\
    O & B_3 & O & B_4 \\
\end{pmatrix}\in \Sp(2n_1+2 n_2,\R).$$ The mapping $(S_1,S_2)\rightarrow S_1 \diamond S_2 $ thus defined a group monomorphism $\Sp(2n_1,\R) \oplus \Sp(2n_2,\R) \rightarrow \Sp(2n_1+2 n_2,\R)$. Note while we state the action for symplectic groups, it also works for  (hermitian)-symplectic groups. Once we have this, we state the following analytic block diagonalization result: 
 \begin{proposition}\label{DIAGNALIZATION LEMMA}
Let $\alpha\in \R\backslash \Q$, $A\in C^{\omega}(\T, \Sp(2d,\R))$.  
Suppose  $A\in \mathcal{DO}_n(\Sp(2d,\R))$ is $n-$dominated, $ 1\leq n\leq d$, then there exists $\chi\in\{1,2\}$, $T \in C^\omega(\chi\T,\Sp(2d,\R))$, $\lambda \in C^\omega(\T,\R_+)$,
    $\Lambda \in C^\omega(\chi\T,\SL(n,\R))$ and $\Gamma \in C^\omega(\T,\Sp(2d-2n,\R))$ such that
    
    \begin{equation}\label{T(X+ALPHA)-1}        T(x+\alpha)^{-1}A(x)T(x)=\begin{pmatrix}
    \lambda(x)\Lambda(x) & O \\
    O & \lambda(x)^{-1}\Lambda(x)^{-\top}
\end{pmatrix}\diamond \Gamma(x).
    \end{equation}
with the following property: 
 \begin{enumerate}
    \item if  $\tau(E^u)=\tau(E^s)=1$, then $\chi=1$.
    \item if $\tau(E^u)=\tau(E^s)=-1$,  then $\chi=2$. Moreover, if we denote by $$
 \mathcal{T}_n=\begin{pmatrix}
    -1 &     \\
      &I_{n-1}
\end{pmatrix}, \mathcal{P}_d=\operatorname{diag}(\mathcal{T}_d,\mathcal{T}_d)=(-I_2)\diamond(I_{2d-2}),$$ then

    $$T(x+1)=T(x)\mathcal{P}_d, \Lambda(x+1)=\mathcal{T}_n^{-1}\Lambda(x)\mathcal{T}_n.$$

    \end{enumerate}
    
Especially if $n=d$, i.e.  $(\alpha, A)$ is uniformly hyperbolic, then 
\begin{equation}\label{T(X+ALPHA)-1.5}
    T(x+\alpha)^{-1}A(x)T(x)=\begin{pmatrix}
    \lambda(x)\Lambda(x) & O \\
    O & \lambda(x)^{-1}\Lambda(x)^{-\top}
\end{pmatrix}.
\end{equation} 
 \end{proposition}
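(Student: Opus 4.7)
The plan is to build the conjugation matrix $T(x)$ as an analytic symplectic frame adapted to the dominated decomposition $E^u(x)\oplus E^c(x)\oplus E^s(x)$, so that the block-diagonal form \eqref{T(X+ALPHA)-1} becomes an automatic consequence of $A$-invariance together with the symplectic structure of Lemma \ref{U,S,C symplectic properties}. Analyticity of the three bundles is provided by Theorem \ref{HOLOMORPHIC SECTION}, and the periodicity class $\chi$ is read off from $\tau(E^u)$ and $\tau(E^s)$.

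I would first apply the antiperiodic-basis result Proposition \ref{REAL LIFT} to produce an analytic basis $\{e_1(x),\ldots,e_n(x)\}$ of $E^u(x)$ consisting of $n-1$ periodic vectors together with one vector whose periodicity class realizes $\tau(E^u)$. Since $E^u\oplus E^s$ is a symplectic subspace in which $E^u$ and $E^s$ are complementary Lagrangians, the pairing $v\mapsto e_j(x)^\top Jv$ is non-degenerate on $E^s(x)$, so I define $f_i(x)\in E^s(x)$ as the unique analytic vectors satisfying $e_j(x)^\top Jf_i(x)=\delta_{ij}$ (existence and analyticity via Cramer's rule). Substituting $x\mapsto x+1$ in this duality identity shows that each $f_i$ inherits the periodicity type of $e_i$; in particular $\tau(E^s)=\tau(E^u)$, ruling out a mixed case. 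For the central bundle $E^c$, the multiplicativity of $\tau$ under direct sums together with $\tau(E^u)\tau(E^c)\tau(E^s)=\tau(\R^{2d})=1$ yields $\tau(E^c)=1$, so Proposition \ref{REAL LIFT} produces a fully 1-periodic analytic basis of $E^c$, to which an analytic symplectic Gram--Schmidt procedure then delivers a 1-periodic symplectic frame $T_c(x)\in\Sp(2d-2n,\R)$.

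Assembling $T(x)$ from the columns $(e_1,\ldots,e_n,f_1,\ldots,f_n,T_c)$ in the order dictated by the $\diamond$ operation, the duality $e_j^\top Jf_k=\delta_{jk}$ together with the symplectic orthogonality of $E^c$ with $E^u\oplus E^s$ force $T(x)\in\Sp(2d,\R)$. Conjugating $A$ by $T$ yields an $A$-invariant block decomposition: the $E^c$-block is some $\Gamma(x)\in\Sp(2d-2n,\R)$, and on $E^u\oplus E^s$ the block must be $\operatorname{diag}(M(x),N(x))$ for some $M,N\in\GL(n,\R)$. The pointwise symplectic identity on this $2n$-block then forces $N=M^{-\top}$. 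Writing $\lambda(x)=(\det M(x))^{1/n}$ -- after, if necessary, flipping the sign of one periodic basis vector so that $\det M>0$ -- and $\Lambda(x)=\lambda(x)^{-1}M(x)\in\SL(n,\R)$ delivers \eqref{T(X+ALPHA)-1}.

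Finally, the periodicity statement follows by direct bookkeeping. If $\tau(E^u)=\tau(E^s)=1$, every frame vector is 1-periodic, so $T\in C^\omega(\T,\Sp(2d,\R))$ and $\chi=1$. If $\tau(E^u)=\tau(E^s)=-1$, the ordering can be chosen so that only $e_1$ and $f_1$ are antiperiodic; then $T(x+1)=T(x)\mathcal{P}_d$ with $\mathcal{P}_d=(-I_2)\diamond I_{2d-2}$. Substituting into \eqref{T(X+ALPHA)-1}, the $\Gamma$-block is unaffected while the $(E^u\oplus E^s)$-block conjugates by $\operatorname{diag}(\mathcal{T}_n,\mathcal{T}_n)$, yielding $\Lambda(x+1)=\mathcal{T}_n^{-1}\Lambda(x)\mathcal{T}_n$; and $\lambda$ remains 1-periodic since the conjugation preserves $\det M$. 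When $n=d$ the bundle $E^c$ is trivial and \eqref{T(X+ALPHA)-1.5} follows directly. The main obstacle is precisely this antiperiodicity bookkeeping -- verifying that the symplectic dual basis of $E^s$ inherits exactly the $\tau$-class of $E^u$, matching sign conventions so that the change-of-basis matrix is precisely $\mathcal{P}_d=(-I_2)\diamond I_{2d-2}$ rather than another order-two element of $\Sp(2d,\R)$, and handling the sign of $\det M$ so that the positive $n$-th root $\lambda$ is analytic without destroying the periodicity class of the chosen frame.
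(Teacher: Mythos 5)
Your overall strategy matches the paper's: use Lemma~\ref{U,S,C symplectic properties} to get the symplectic-orthogonal structure, build a global analytic symplectic frame adapted to $E^u\oplus E^c\oplus E^s$ via Proposition~\ref{REAL LIFT}, assemble $T$, read off the block form, and bookkeep periodicity through $\tau$. Your construction of the dual basis $\{f_i\}$ of $E^s$ directly from the nondegenerate pairing $v\mapsto e_j^\top Jv$ (via Cramer's rule) is a genuinely cleaner route to the conclusion of Proposition~\ref{REAL SYMPLECTIC LIFT}(2), which the paper obtains by a more laborious induction on $n$; your argument that $f_i$ inherits $\tau(e_i)$ and hence $\tau(E^s)=\tau(E^u)$ is correct. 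The observation $\tau(E^u)\tau(E^c)\tau(E^s)=1$ giving $\tau(E^c)=1$ is also correct. Do note that your appeal to an ``analytic symplectic Gram--Schmidt'' on $E^c$ is precisely Proposition~\ref{REAL SYMPLECTIC LIFT}(1), which needs its own inductive proof; you are implicitly using it.

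There is, however, a genuine gap in the step that normalizes $\lambda>0$. You propose writing $\lambda=(\det M)^{1/n}$ after ``flipping the sign of one periodic basis vector so that $\det M>0$,'' but this fix cannot work: replacing the frame of $E^u$ (and the dual frame of $E^s$) by a constant sign change $D=\mathrm{diag}(\pm1,\dots,\pm1)$ conjugates $M(x)\mapsto D^{-1}M(x)D$, and more generally any analytic frame change $P(x)$ sends $M(x)\mapsto P(x+\alpha)^{-1}M(x)P(x)$; in both cases $\det M$ is multiplied by a positive quantity $\det P(x)/\det P(x+\alpha)$, so $\operatorname{sign}\det M$ is an intrinsic invariant of the restricted cocycle $A|_{E^u}$ that no choice of frame can alter. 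It can be negative: the constant cocycle $A=\operatorname{diag}(-2,3,-\tfrac12,\tfrac13)\in\Sp(4,\R)$ is uniformly hyperbolic with periodic $E^u=\operatorname{span}(e_1,e_2)$, $\tau(E^u)=\tau(E^s)=1$, and $\det M\equiv-6<0$, so no $\lambda\in\R_+$, $\Lambda\in\SL(2,\R)$ can satisfy $\lambda\Lambda=M$. To be fair, the paper's own proof has the identical lacuna --- it writes $\lambda=\sqrt{\det\Lambda_1}$ without verifying positivity --- and the stated form of the proposition should properly allow $\lambda\in C^\omega(\T,\R\setminus\{0\})$, or $\Lambda$ with determinant $\pm1$, which suffices for every downstream use. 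You deserve credit for noticing that the sign of $\det M$ must be addressed; the proposed remedy, though, is not available.
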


While in the hermitian symplectic case, we have the following:
\begin{proposition}\label{DIAGNALIZATION LEMMA-1}
Let $\alpha\in \R\backslash \Q$, $A\in \mathcal{DO}_n(\HSp(2d))$. 
Then there exist $T \in C^\omega(\T,\HSp(2d))$, $\Lambda \in C^\omega(\T,\GL(n,\C))$ and $\Gamma \in C^\omega(\T,\HSp(2d-2n))$ such that 
\begin{equation}\label{T(x+alpha)-2}   T(x+\alpha)^{-1}A(x)T(x)=\begin{pmatrix}
    \Lambda(x) & O \\
    O & \Lambda(x)^{-*}
\end{pmatrix}\diamond \Gamma(x).
\end{equation}
\end{proposition}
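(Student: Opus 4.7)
The plan is to analytically trivialize the dominated splitting supplied by Lemma \ref{U,S,C symplectic properties} and then adjust the resulting frames into an analytic hermitian-symplectic basis. From $A\in\mathcal{DO}_n(\HSp(2d))$, together with the duality $\mathcal{DO}_n(\HSp(2d))=\mathcal{DO}_{2d-n}(\HSp(2d))$ proved at the start of this section, one extracts a dominated splitting $\mathbb{C}^{2d}=E^u(x)\oplus E^c(x)\oplus E^s(x)$ with $\dim E^u=\dim E^s=n$. By Lemma \ref{U,S,C symplectic properties}, $E^u$ and $E^s$ are hermitian-isotropic, $E^c$ is a hermitian-symplectic subspace, and $E^c$ is $\omega$-orthogonal to $E^u\oplus E^s$, where $\omega(v,w):=v^*Jw$. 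Theorem \ref{HOLOMORPHIC SECTION} ensures that these bundles depend analytically on $x$ and extend holomorphically to a strip around $\mathbb{T}$; since every holomorphic vector bundle over such an annulus is trivial, I would pick analytic periodic frames $U(x),\tilde S(x),W(x)$ for $E^u,E^s,E^c$ respectively. In contrast to the real symplectic case, no antiperiodicity can arise, because complex vector bundles over $S^1$ are always trivial.

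Next I would arrange $U$ and $\tilde S$ into a canonically paired frame. The analytic Krein matrix $M(x):=\tilde S(x)^*JU(x)$ is pointwise invertible, for if some nonzero $u\in E^u(x)$ satisfied $\omega(s,u)=0$ for every $s\in E^s(x)$, the isotropy of $E^u$ and the $\omega$-orthogonality of $E^c$ would give $\omega(v,u)=0$ for every $v\in\mathbb{C}^{2d}$, contradicting non-degeneracy of $\omega$. Setting $S(x):=\tilde S(x)(M(x)^*)^{-1}$ then produces an analytic frame of $E^s$ with $S(x)^*JU(x)=I_n$, while the isotropy identities $U^*JU=0=S^*JS$ are automatically preserved.

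The main obstacle is producing an analytic canonical frame for $E^c$, and this is where I would invoke Theorem \ref{diagonalization-1}. Since $\omega|_{E^c}$ is non-degenerate, $G(x):=W(x)^*(iJ)W(x)$ is an analytic, invertible, hermitian matrix-valued function of size $2(d-n)$. The Hermitian form $(v,w)\mapsto v^*(iJ)w$ has signature $(d,d)$ on $\mathbb{C}^{2d}$ and, thanks to the isotropic-pair structure of $E^u\oplus E^s$, signature $(n,n)$ on that subspace; Sylvester's law applied to the $\omega$-orthogonal decomposition $\mathbb{C}^{2d}=(E^u\oplus E^s)\oplus E^c$ thus pins the signature of $G(x)$ to $(d-n,d-n)$ at every $x$. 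Theorem \ref{diagonalization-1} then supplies $N\in C^\omega(\mathbb{T},\GL(2(d-n),\mathbb{C}))$ with $N(x)^*G(x)N(x)=\operatorname{diag}(I_{d-n},-I_{d-n})$, and a constant congruence $C$ satisfying $C^*\operatorname{diag}(I_{d-n},-I_{d-n})C=iJ_{d-n}$ (which exists by the classical Sylvester law since both sides are Hermitian of signature $(d-n,d-n)$) converts this to the standard hermitian-symplectic form. Setting $W_{\mathrm{can}}(x):=W(x)N(x)C$ then gives $W_{\mathrm{can}}(x)^*JW_{\mathrm{can}}(x)=J_{d-n}$. Without the analytic Sylvester Inertia Theorem this step would fail: as noted in the introduction, a general analytic family of hermitian-symplectic subspaces need not admit an analytic canonical basis.

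Finally, I would assemble $T(x)$ by placing the columns of $U$, the first $d-n$ columns of $W_{\mathrm{can}}$, the columns of $S$, and the last $d-n$ columns of $W_{\mathrm{can}}$ in the order prescribed by the $\diamond$-convention; the canonical pairing relations verified above immediately give $T\in C^\omega(\mathbb{T},\HSp(2d))$. Since $A(x)E^\ast(x)=E^\ast(x+\alpha)$ for $\ast\in\{u,c,s\}$, the conjugate $T(x+\alpha)^{-1}A(x)T(x)$ is block-diagonal in the $\diamond$-sense, with an $n\times n$ block $\Lambda(x)\in\GL(n,\mathbb{C})$ on $E^u$; the hermitian-symplectic constraint on the resulting $2n\times 2n$ subblock then forces the $E^s$-block to equal $\Lambda(x)^{-*}$, while the remaining $2(d-n)\times 2(d-n)$ subblock is the desired $\Gamma\in C^\omega(\mathbb{T},\HSp(2d-2n))$, yielding (\ref{T(x+alpha)-2}).
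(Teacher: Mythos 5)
Your proposal is correct and follows essentially the same route as the paper: the paper first proves Proposition~\ref{Hermitian Lift} (the construction of an analytic canonical frame), whose content you are reproducing inline, and then Proposition~\ref{DIAGNALIZATION LEMMA-1} follows directly. Concretely, you use Theorem~\ref{HOLOMORPHIC SECTION} together with triviality of holomorphic bundles over an annulus in place of the paper's explicit citation of Theorem~\ref{COMPLEX LIFT}; you normalize the $E^u,E^s$ pairing by the invertible Krein block $M(x)=\tilde S(x)^*JU(x)$ (the paper normalizes the equivalent block $H(x)$ of the Krein matrix), and you justify that $G(x)=W(x)^*(iJ)W(x)$ has balanced signature $(d-n,d-n)$ by applying Sylvester's law to the $\omega$-orthogonal decomposition $\mathbb{C}^{2d}=(E^u\oplus E^s)\oplus E^c$, exactly as the paper does by observing the full Krein matrix is congruent to $J$. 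The final assembly of $T$ and the use of the $\HSp$ constraint to pin down $\Lambda^{-*}$ and $\Gamma$ also matches the paper's argument.
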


\subsection{Proof of Theorem \ref{UH DENSITY}}
First we need the result perturbing the cocycle to one whose first Lyapunov exponent is positive. 
\begin{theorem}\cite{JMD}\label{JMD POSITIVE DENSITY}
 Let $\alpha\in \R\backslash \Q$. Then there is a dense set of $A \in C^\omega\left(\T, \mathrm{SL}(2, \mathbb{R})\right)$ (in the usual inductive limit topology) such that $L(\alpha, A)>0$.
\end{theorem}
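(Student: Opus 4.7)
The plan is to approximate an arbitrary $A_0 \in C^\omega(\T, \SL(2,\R))$ by cocycles with positive Lyapunov exponent, using the complex extension of the cocycle together with the subharmonicity/convexity framework of Avila's global theory.

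First I would assume $L(\alpha, A_0) = 0$, since otherwise there is nothing to prove. The inductive limit topology allows us to restrict attention to $A_0$ that extends holomorphically to some strip $|\mathrm{Im}\,z| < \delta$, with admissible perturbations also extending to the same strip. Introduce the complexified Lyapunov exponent
$$
\ell(y) := L(\alpha, A_0(\cdot + iy)), \qquad |y| < \delta.
$$
By the global theory, $\ell$ is convex, even, non-negative, and piecewise affine with integer slopes (the accelerations). Since $\ell(0) = 0$, convexity forces a clean dichotomy: either (a) $\ell \equiv 0$ on a neighborhood of $0$ (so $(\alpha,A_0)$ is subcritical), or (b) $\ell(y) > 0$ for every $y \neq 0$ (so $(\alpha,A_0)$ is critical, with $\omega^1(A_0) > 0$).

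In the critical case (b), pick a generic $B \in C^\omega(\T, \mathfrak{sl}(2,\R))$ and consider the one-parameter family $A_t := A_0 \cdot \exp(tB)$. Then $t \mapsto L(\alpha, A_t)$ extends to a subharmonic function on a complex neighborhood of $0 \in \C$; the positivity $\ell(y) > 0$ for $y \neq 0$ (applied to the family $t = iy$) prevents this subharmonic function from being identically zero on any open set. Hence its zero set in real $t$ is nowhere dense, producing arbitrarily small real $t$ with $L(\alpha, A_t) > 0$.

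In the subcritical case (a), the cocycle $(\alpha, A_0)$ is almost reducible by Avila's almost reducibility theorem: there exist conjugacies $B_n \in C^\omega(\T, \PSL(2,\R))$ bringing $A_0$ arbitrarily close (possibly after enlarging the strip) to constant rotations $R_{\rho_n}$. After conjugation, a small perturbation of the constant rotation by a cocycle with nontrivial topological degree lands inside the open set of uniformly hyperbolic $\SL(2,\R)$-cocycles, where $L > 0$ automatically. Pulling back via $B_n^{-1}$ yields the required approximation of $A_0$.

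The main obstacle is case (a): one needs quantitative control over the almost-reducibility conjugacies $B_n$ — namely bounds on their analytic norms — so that pulling back a small perturbation of $R_{\rho_n}$ still produces a cocycle close to $A_0$ in the inductive limit topology. This quantitative version of ART is the deepest ingredient of the argument; the critical case (b) is by comparison a soft consequence of subharmonicity, while the reduction to (a)/(b) itself follows from the general structure of $\ell(y)$ supplied by the global theory.
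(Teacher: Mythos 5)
This theorem is cited from \cite{JMD} and is not reproved in the present paper, so there is no ``paper's own proof'' to compare against; what follows is an assessment of your proposed argument on its own terms. Your overall route (global theory to split into critical/subcritical, ART in the subcritical case) is a recognizable modern derivation, but note that it is \emph{not} the route in \cite{JMD}: the 2009 argument predates the Almost Reducibility Theorem and proceeds differently. More importantly, both of your cases contain genuine gaps.

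In case (b), the step connecting $\ell(y) = L(\alpha, A_0(\cdot+iy))$ to the family $A_t = A_0\exp(tB)$ at $t=iy$ is wrong: these are different complex deformations, and positivity of $\ell(y)$ for $y\neq 0$ says nothing about $L(\alpha, A_0\exp(iyB))$ for a generic $B$. Even if you could show the plurisubharmonic function $t\mapsto L(\alpha, A_0\exp(tB))$ is not identically zero, that does not make its real-line zero set nowhere dense---a nonnegative subharmonic function can vanish identically on $\R$ while positive elsewhere (e.g. $|\mathrm{Im}\,z|$). Excluding criticality is genuinely the hard part; the correct mechanism, both in \cite{Global} and in this paper's Proposition \ref{RARECRITICAL}, is to show that the vanishing of all directional derivatives of $L$ forces the stable/unstable directions to be complex-conjugate, hence the cocycle is analytically conjugate to a rotation-valued one of nonzero degree, and that degree is then ruled out by a topological constraint. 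Your subharmonicity sketch skips this entirely. In case (a), ``perturbation of the constant rotation by a cocycle with nontrivial topological degree'' is not a small perturbation; the right step is to first move the rotation number to a resonance $2\rho \in \Z + \alpha\Z$ by a small constant perturbation (such $\rho$ are dense) and then open a gap by a small non-constant perturbation. You are right that quantitative control on the ART conjugacies is needed in the inductive-limit topology, but as written both branches of the argument fail before reaching that issue.
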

\begin{theorem}\cite{DiSheng Xu}\label{POSITIVE DENSITY}
     Let $\alpha\in \R\backslash \Q$,  $G=\Sp(2d,\R)$ or $G=\HSp(2d)$ there is a dense set of $A \in C^\omega\left(\T,G \right)$ (in the usual inductive limit topology) such that $L_1(\alpha, A)>0$.
\end{theorem}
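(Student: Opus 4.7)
The plan is to establish openness and density separately.

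For openness, $\mathcal{UH}(G)$ is already open in $C^\omega(\T, G)$ by persistence of dominated splittings, and $\mathcal{UH}(G) \subset \mathcal{R}(G)$ because the dominated decomposition of a UH cocycle extends holomorphically to a strip by Theorem \ref{HOLOMORPHIC SECTION}, forcing all accelerations $\omega^k$ to vanish.

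For density, I would argue iteratively, successively enlarging the number of positive Lyapunov exponents via block-diagonalization and perturbation. Given $A \in \mathcal{R}(G)$ that is not already UH, let $n = n(A) \in \{0, 1, \ldots, d-1\}$ denote the number of strictly positive Lyapunov exponents. By the (Hermitian-)symplectic symmetry $L_k(A) = -L_{2d+1-k}(A)$ one has $L_n(A) > 0 = L_{n+1}(A)$. Since $A \in \mathcal{R}(G)$ is in particular $n$-regular, Theorem \ref{DOMINATE=REGULAR} gives $n$-domination, and Proposition \ref{DIAGNALIZATION LEMMA} (resp.\ Proposition \ref{DIAGNALIZATION LEMMA-1}) produces an analytic conjugation $T$ putting $A$ into block form $B^{uh}(x) \diamond \Gamma(x)$, with $B^{uh}$ UH of rank $2n$ and $\Gamma \in C^\omega(\chi\T, G')$ in the smaller structure group $G' = \Sp(2d-2n, \R)$ or $\HSp(2d-2n)$, satisfying $L_j(\alpha, \Gamma) = 0$ for every $j$. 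Applying Theorem \ref{POSITIVE DENSITY} inside $G'$ (with appropriate care for the antiperiodic twist $\Lambda(x+1) = \mathcal{T}_n^{-1}\Lambda(x)\mathcal{T}_n$ when $\chi = 2$, by working in the corresponding invariant submanifold of $C^\omega(2\T, G')$) produces $\Gamma'$ arbitrarily close to $\Gamma$ with $L_1(\alpha, \Gamma') > 0$; reassembling via $T$ yields $A^{(1)}$ close to $A$ with $n(A^{(1)}) \geq n + 1$.

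The main obstacle is that $A^{(1)}$ will typically leave $\mathcal{R}(G)$, so Proposition \ref{DIAGNALIZATION LEMMA} cannot be reapplied to $A^{(1)}$ directly. To continue the iteration, I would invoke the Avila--Jitomirskaya--Sadel result from \cite{1D}: cocycles whose Oseledets filtration is either trivial or fully dominated form an open dense subset of $C^\omega(\T, \GL(2d, \C))$ (with symplectic analogue obtained by intersection with the structure group). A further tiny perturbation places $A^{(1)}$ inside this set, which, combined with the genuine Lyapunov gap $L_{n+1}(A^{(1)}) > 0 = L_{2d-n-1}(A^{(1)})$, gives $(n+1)$-domination. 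Proposition \ref{DIAGNALIZATION LEMMA} now applies to the refined cocycle, strictly enlarging the UH block. Iterating this step at most $d - n(A)$ times, with each perturbation chosen of size $\varepsilon/d$, exhausts the central bundle and yields a UH cocycle within $\varepsilon$ of the original $A$. The most delicate bookkeeping involves ensuring the antiperiodicity twists remain compatible across iterations and that a positive top Lyapunov exponent of $\Gamma'$ actually produces a Lyapunov \emph{gap} at level $n+1$ (not merely $L_{n+1} > 0$); both are secured by the openness of domination and the genericity of dominated filtrations at each step.
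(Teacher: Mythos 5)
This theorem is cited from \cite{DiSheng Xu}; the present paper contains no proof of it, so there is no in-paper argument to compare against. That said, your proposal has several genuine problems.

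First, the argument is circular. You explicitly invoke Theorem \ref{POSITIVE DENSITY} inside the smaller structure group $G'$, and you also need it implicitly in the full group $G$ at the very first step, namely whenever you ask for a nearby cocycle in $\mathcal{R}(G)$ with a positive first Lyapunov exponent. What you have reconstructed is essentially the paper's proof of Theorem \ref{UH DENSITY}, but that proof \emph{uses} Theorem \ref{POSITIVE DENSITY} as an external input supplied by \cite{DiSheng Xu}; that scaffold cannot be turned around to prove the input itself.

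Second, the density part of your argument only starts from $A \in \mathcal{R}(G)$, whereas the statement asserts density in all of $C^\omega(\T, G)$. Even when $L_1(A)=0$, which forces all $L_k=0$ by (Hermitian-)symplectic symmetry, the accelerations $\omega^k$ need not vanish, so such an $A$ need not be regular, and Proposition \ref{DIAGNALIZATION LEMMA} does not apply. Nothing in the proposal says how to move an irregular cocycle into $\mathcal{R}(G)$ while preserving the structure group, and this is precisely the hard part.

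Third, the parenthetical assertion that the Avila--Jitomirskaya--Sadel open-dense result of \cite{1D} has a ``symplectic analogue obtained by intersection with the structure group'' is unjustified. The result in \cite{1D} is for $\GL(m,\C)$-valued cocycles, and the density argument there deforms in the imaginary direction, which does not preserve $\Sp(2d,\R)$ or $\HSp(2d)$; a dense subset of the ambient space need not intersect a proper analytic submanifold densely. Preserving the structure group under perturbation is exactly the obstruction that \cite{DiSheng Xu} was written to overcome, and that the introduction of the present paper emphasizes.

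Finally, the opening discussion of the openness of $\mathcal{UH}(G)$ is misplaced: Theorem \ref{POSITIVE DENSITY} is purely a density claim about $\{A: L_1(\alpha,A)>0\}$, and your proposal appears to have conflated it with Theorem \ref{UH DENSITY}, which is the later open-and-dense statement for uniformly hyperbolic cocycles within $\mathcal{R}(G)$.
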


Once we have these, we prove Theorem \ref{UH DENSITY} by induction. 
    When $d=1$, it follows by Theorem \ref{JMD POSITIVE DENSITY} and Theorem \ref{DOMINATE=REGULAR}. If $d\geq 2$, 
by Theorem \ref{POSITIVE DENSITY}, we can take $\tilde{A}$ is  close to $A$ that $\tilde{A} \in \mathcal{R}(G)$ and $L_1(\tilde{A})>0$. If $L_d(\tilde{A})>0$ then by Theorem \ref{DOMINATE=REGULAR}, $\tilde{A} \in \mathcal{UH}(G)$. Otherwise, there exists $1 \leq n \leq d-1$,  such that $L_{n}(\tilde{A})>L_{n+1}(\tilde{A})=0$. Still by Theorem \ref{DOMINATE=REGULAR}, $\tilde{A} \in \mathcal{DO}_n(G)$, thus we can take $T, \Lambda$, $\Gamma$(and $\lambda$, if $G=\Sp(2d,\R)$) as described in Proposition \ref{DIAGNALIZATION LEMMA},  
 and $L_1(\Gamma)=L_{n+1}(A)=0$. By the reduction assumption, there exists $\tilde{\Gamma}$ which is so close to $\Gamma$ that $\tilde{\Gamma} \in \mathcal{R}(G)$ while $\tilde{\Gamma} \in \mathcal{UH}(G)$. Take $$A^\prime(x)=T(x+\alpha)\begin{pmatrix}
    \lambda(x)\Lambda(x) & O \\
    O & \lambda(x)^{-1}\Lambda(x)^{-T}
\end{pmatrix}\diamond \tilde{\Gamma}(x)T(x)^{-1},$$ 
for the case $G=\Sp(2d,\R)$, and if $G=\HSp(2d)$ we take
$$A^\prime(x)=T(x+\alpha)\begin{pmatrix}
    \Lambda(x) & O \\
    O & \Lambda(x)^{-*}
\end{pmatrix}\diamond \tilde{\Gamma}(x)T(x)^{-1},$$
then $A^\prime \in \mathcal{UH}(G)$ and is close to $A$.    
\qed

\section{Analytic Block diagonalization in symplectic  groups}\label{subsec: diag real}

\subsection{Holomorphic structure of $Gr(n,\C^d)$}
Recall that $Gr(n,\C^d)$, the Grassmannian
of $n$-dimensional subspaces of $\C^d$, which can be viewed as a complex homogeneous space $\GL(d,\C)/\GL(n,d)$, where $\GL(n,d):=\begin{pmatrix}
    A &C\\&D
\end{pmatrix}, A\in \GL(n,\C),D\in \GL(d-n,\C),C\in \C^{n\times (d-n)}$, with the standard holomorphic structure as a complex manifold. See Appendix of \cite{1D} for more details. In particular, we have the following corollary of Theorem A.1. (iii). of \cite{1D}.
\begin{lemma}\label{lem: holo map to complx Gr}A map $E$ from a region $D\subset \C$ to $Gr(n,\C^d)$ is holomorphic if for any $x\in D$, there is a neighborhood $N_x$ of $x$ in $D$ and holomorphic $\C^d$-valued mappings $v_1,\dots, v_n$ \footnote{i.e. each component of $v_i$ is a holomophic function.} defined on $N_x$ such that 
$E|_{N_x}=\mathrm{span}_\C\{v_1,\dots, v_n\}$.
\end{lemma}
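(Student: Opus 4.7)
The plan is to verify holomorphicity of $E$ on affine charts of $Gr(n,\C^d)$, since being holomorphic is a local property and by the standard complex structure on the Grassmannian it suffices to check holomorphicity of the coordinates in such charts. Given $x_0\in D$, let $V_0:=E(x_0)$ and choose any complementary subspace $W_0\subset \C^d$ with $V_0\oplus W_0=\C^d$. Recall that the open set
\[
U_{V_0,W_0}:=\{V\in Gr(n,\C^d): V\oplus W_0=\C^d\}
\]
is biholomorphic to $\mathrm{Hom}(V_0,W_0)\cong \C^{n(d-n)}$ via $L\mapsto \{v+Lv:v\in V_0\}$; this is the standard affine chart around $V_0$.

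By hypothesis, there is a neighborhood $N_{x_0}$ on which $E=\mathrm{span}_\C\{v_1(x),\ldots,v_n(x)\}$ with each $v_j$ holomorphic. Denote by $\pi_{V_0}$ and $\pi_{W_0}$ the projections to $V_0$ and $W_0$ associated with the decomposition $V_0\oplus W_0$. Form the holomorphic $d\times n$-valued matrices $P(x)$ and $Q(x)$ whose columns are $\pi_{V_0}(v_j(x))$ and $\pi_{W_0}(v_j(x))$ respectively, expressed in fixed bases of $V_0$ and $W_0$. Since $v_1(x_0),\ldots,v_n(x_0)$ form a basis of $V_0$, the matrix $P(x_0)$ is invertible, and hence $P(x)\in\GL(n,\C)$ on a (possibly smaller) neighborhood of $x_0$.

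On this neighborhood, the columns of the $\C^d$-valued matrix $(v_1(x),\ldots,v_n(x))P(x)^{-1}$ form a basis of $E(x)$ whose $V_0$-components are exactly the standard basis of $V_0$. Therefore $E(x)\in U_{V_0,W_0}$ and, in the affine chart above, $E(x)$ corresponds to the linear map
\[
L(x)=Q(x)P(x)^{-1}\in \mathrm{Hom}(V_0,W_0),
\]
which is holomorphic in $x$ as a product of holomorphic matrix-valued functions (the inverse being holomorphic by Cramer's rule). Since $x_0\in D$ was arbitrary, $E$ is holomorphic on $D$.

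The argument is essentially book-keeping once the affine chart description of $Gr(n,\C^d)$ is in place; no real obstacle is present, and this is precisely why the lemma is a direct corollary of Theorem~A.1(iii) of \cite{1D}. The only minor care needed is to ensure that the chosen $W_0$ works uniformly on a neighborhood, which is automatic by openness of the condition $V\oplus W_0=\C^d$.
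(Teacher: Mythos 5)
Your proof is correct. The paper itself gives no argument for this lemma — it simply records it as a corollary of Theorem~A.1(iii) of \cite{1D} — whereas you supply a complete self-contained proof via the standard affine chart $U_{V_0,W_0}\cong\mathrm{Hom}(V_0,W_0)$ on the Grassmannian. Your reduction is sound: $P(x_0)=I$ (in the chosen basis of $V_0$), so $P$ is invertible near $x_0$, and the chart coordinate $L(x)=Q(x)P(x)^{-1}$ is then visibly holomorphic, with $P(x)$ invertible being exactly the transversality condition $E(x)\oplus W_0=\C^d$ that places $E(x)$ in the chart. This makes the lemma independent of the external reference at essentially no cost, which is a modest improvement in self-containment; the underlying mathematics is the same as what \cite{1D} packages in their Theorem~A.1.
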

The following definition follows the standard definition of real analytic mappings.
\begin{definition}\label{def: r-anal to c gr}Let $I$ be an open interval or $\R$, A map $E: I\to Gr(n,\C^d)$ is called real analytic if for any $x\in I$, there is a neighborhood $N_x\subset \C$ of $x$ and an extension $\tilde E: N_x\to Gr(n,\C^d)$ which is holomorphic and $\tilde E|_{N_x\cap I}=E|_{N_x\cap I}$. In particular in the view of Lemma \ref{lem: holo map to complx Gr}, $E$ is real analytic iff for any $x\in I$, there is a neighborhood $N_x\subset \C$ of $x$ and holomorphic $\C^d$-valued mappings $v_i, 1\leq i\leq n$ defined on $N_x$ such that $E|_{N_x\cap I}:=\mathrm{span}_\C\{v_1,\dots, v_n\}$.
\end{definition}

\subsection{Real-analytic  structure of $Gr(n,\R^d)$}


The fact we wish to clarify rigorously is: for $\alpha \in \R \backslash \Q$ and $A \in \mathcal{DO}_n(\Sp(2d,\R))$, its invariant sub-bundles $E^\ast, \ast\in \{s,u,c\}$, where $E^\ast(x)$ is a subspace of $\R^{2d}$, depend on $x \in \T$ analytically. 
First we build the $\R$-analogue of Lemma \ref{lem: holo map to complx Gr}.

Recall that for an $n$-dimensional real linear subspace $V\subset \R^d$, its complex extension $V^\C$ can be identified as $V\oplus i\cdot V\subset \C^d$, with an $n$-dimensional complex linear space structure and a $2n$-dimensional real linear space structure.

\begin{definition}\label{The exact meaning of analytical subspace}
\begin{enumerate}
    \item Let $I$ be an open interval or $\R$, A map $E: I\to Gr(n,\R^{d})$ is called real-analytic if its complex extension $$E^\C: I\to Gr(n,\C^{d}), E^\C(x):=E(x)\oplus i\cdot E(x), \forall x\in I,$$ is real-analytic in the sense of Definition \ref{def: r-anal to c gr}. More precisely, for any $x\in I$ there exists a neighborhood $N_x\subset \C$ of $x$ and an extension $\tilde E^\C: N_x\to Gr(n,\C^d) $ which is holomorphic \footnote{here $Gr(n,\C^d)$ has a canonical complex structure so the holomorphicity of $\tilde E^\C$ from a region in $\C$ to $Gr(n,\C^d)$ is well-defined.} and $\tilde E^\C|_{N_x\cap I}=E^\C$.
    \item A map $E: \T\to Gr(n,\R^{d})$ is called real-analytic if it can be canonically lifted to a real-analytic map $\tilde E:\R\to Gr(n,\R^{d})$ in the sense of (1) and satisfies $\tilde E(x+1)=\tilde E(x).$
\end{enumerate}

\end{definition}

\begin{remark}
    Since the lift $\tilde{E}$ in Definition \ref{The exact meaning of analytical subspace} is canonical,
    we identify $\tilde{E}$ and $E$ in the following just for convenience. 
\end{remark}

In the following, for $I$ being an open interval, $\T$ or $\R$, denote all the real-analytic map: $E: I\to Gr(n,\R^{d})$ by $C^\omega(I,Gr(n,\R^{d}))$. And as direct corollary of Definition \ref{The exact meaning of analytical subspace}, we have the following elementary observation:

\begin{corollary}
    Let $\alpha \in \R \backslash \Q$ and $A \in \mathcal{DO}_n(\Sp(2d,\R))$, then all the invariant real sub-bundles $E^\ast, \ast \in \{s,u,c\}$ of the cocycle $(\alpha,A)$ are real-analytic.
\end{corollary}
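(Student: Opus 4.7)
The plan is to deduce this corollary directly from Theorem \ref{HOLOMORPHIC SECTION} by descending the holomorphic complex invariant bundles to real-analytic real ones via the real structure of $A$, and then obtain $E^c$ via the symplectic-complement characterization in Lemma \ref{U,S,C symplectic properties}. No major obstacle is anticipated; the most delicate step is the descent from complex to real bundles, which rests on the commutation of complex conjugation with real matrices and uniqueness of the dominated splitting.

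First I view $A\in C^\omega(\T,\Sp(2d,\R))$ as an element of $C^\omega(\T,\GL(2d,\C))$. Since $n$-domination is an open condition in the inductive-limit $C^\omega$-topology, $(\alpha,A(\cdot+it))$ remains $n$-dominated for $t$ in some interval $(-\delta,\delta)$, so Theorem \ref{HOLOMORPHIC SECTION} produces holomorphic extensions $E^\pm_A$ of the complex dominated bundles to the strip $\{|\mathrm{Im}\,z|<\delta\}$. Running the same argument with $2d-n$ in place of $n$ (using $\mathcal{DO}_n=\mathcal{DO}_{2d-n}$ established above), one obtains in particular holomorphic extensions of the $n$-dimensional complex unstable bundle and the $n$-dimensional complex stable bundle to this strip.

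Next, for $x\in\R$, since $A(x)\in\GL(2d,\R)$, complex conjugation $v\mapsto\bar v$ on $\C^{2d}$ commutes with every $A_n(x)$. The complex dominated subspaces are determined intrinsically by the $A_n(x)$-dynamics, hence by uniqueness of the dominated splitting they are conjugation-invariant. Therefore $E^+_A(x)=E^u(x)\oplus iE^u(x)$ where $E^u(x):=E^+_A(x)\cap\R^{2d}$, and similarly $E^-_A(x)=E^s(x)\oplus iE^s(x)$ from $(2d-n)$-domination. The holomorphic extensions of these complexifications on the strip $\{|\mathrm{Im}\,z|<\delta\}$, together with periodicity inherited from $A$, are exactly the conditions of Definition \ref{The exact meaning of analytical subspace}, so $E^u,E^s\in C^\omega(\T,Gr(n,\R^{2d}))$.

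Finally, Lemma \ref{U,S,C symplectic properties}(1) gives $E^c(x)=(E^u(x)\oplus E^s(x))^{\perp_J}$. Choosing locally a real-analytic frame $\{v_i(x)\}_{i=1}^{2n}$ of $E^u(x)\oplus E^s(x)$ (which exists on a neighborhood of any point by the real-analyticity just established), $E^c(x)$ is the kernel of the real-analytic, constant-rank map $w\mapsto (v_i(x)^T Jw)_{i=1}^{2n}$, hence depends real-analytically on $x$. Since the symplectic complement is independent of the frame chosen, these local constructions glue into a global element $E^c\in C^\omega(\T,Gr(2d-2n,\R^{2d}))$, completing the proof.
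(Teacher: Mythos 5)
Your proof is correct and follows essentially the same strategy as the paper: view $A$ as a $\GL(2d,\C)$-cocycle, invoke Theorem~\ref{HOLOMORPHIC SECTION} to get holomorphic complex dominated bundles, and descend to $\R$ via Definition~\ref{The exact meaning of analytical subspace}. The paper's own proof is a one-liner that asserts, without elaboration, that the complex invariant bundles $(E^\ast)^\C$ are the complexifications of $E^\ast$; you supply the missing justification (conjugation commutes with the real cocycle, and uniqueness of the dominated splitting forces conjugation-invariance, hence $E^\pm_A(x)=(E^\pm_A(x)\cap\R^{2d})\oplus i(E^\pm_A(x)\cap\R^{2d})$), which is the right argument. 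Your only genuine deviation is in recovering $E^c$: you pass through the symplectic complement of $E^u\oplus E^s$ (via Lemma~\ref{U,S,C symplectic properties} and a constant-rank-kernel argument), whereas the paper implicitly treats $(E^c)^\C$ directly as one of the invariant complex bundles, obtainable e.g.\ as the intersection of the $n$-stable and $(2d-n)$-unstable holomorphic bundles. Both routes work; yours slightly front-loads the symplectic structure, but neither buys anything the other doesn't.
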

\begin{proof}
  Indeed one can  view $A$ as a complex cocycle and with invariant complex sub-bundles $(E^\ast)^\C$ which are exactly complex extension of $E^\ast$ respectively. According to Theorem \ref{HOLOMORPHIC SECTION} $(E^\ast)^\C$ are all holomorphic, hence by (1) of Definition \ref{The exact meaning of analytical subspace}, $E^\ast$ are real analytic.   
\end{proof}
 
 

In practice we usually verify real analyticity of invariant bundles by the following elementary lemma which is a real version of Lemma \ref{lem: holo map to complx Gr}:

\begin{lemma}\label{equi-def}
Let $1\leq n\leq d$, $I$ is an interval or $\R$. Then $E\in C^\omega(I,Gr(n,\R^{d}))$ if and only if for any $x \in I$, there exists a neighborhood $(x-\delta, x+\delta)$ and real analytic $\R^d$-valued functions $v_i, 1\leq i\leq n$ defined on $(x-\delta,x+\delta)$  such that we have $E|_{(x-\delta, x+\delta)}=\operatorname{span}_{\R}\{v_1,v_2,\dots,v_n\}.$    
\end{lemma}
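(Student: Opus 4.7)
The plan is to establish both implications by working with the canonical complex extension $E^\mathbb{C}$ and exploiting the relation $E^\mathbb{C}(x)=E(x)\oplus i\cdot E(x)$ at real points, combined with Lemma \ref{lem: holo map to complx Gr}.

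First I would handle the easier ``if'' direction. Suppose near each $x_0\in I$ we have real analytic $\R^d$-valued functions $v_1,\dots,v_n$ on $(x_0-\delta,x_0+\delta)$ with $E|_{(x_0-\delta,x_0+\delta)}=\operatorname{span}_\R\{v_1,\dots,v_n\}$. Each $v_i$ extends holomorphically to some complex neighborhood $N_{x_0}\subset\C$ of $x_0$, call the extension $\tilde v_i$. Since $v_1(x_0),\dots,v_n(x_0)$ are $\R$-linearly independent (they span an $n$-dimensional space), they are also $\C$-linearly independent, so by continuity of a suitable minor determinant, $\tilde v_1(z),\dots,\tilde v_n(z)$ remain $\C$-linearly independent on a possibly smaller $N_{x_0}$. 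Then by Lemma \ref{lem: holo map to complx Gr}, $z\mapsto \operatorname{span}_\C\{\tilde v_1(z),\dots,\tilde v_n(z)\}$ is a holomorphic map $N_{x_0}\to Gr(n,\C^d)$. On real points of $N_{x_0}\cap I$, this complex span equals $E(x)\oplus i\cdot E(x)=E^\C(x)$, giving the local holomorphic extension of $E^\C$ required by Definition \ref{The exact meaning of analytical subspace}.

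For the ``only if'' direction, suppose $E^\C$ is real analytic as in Definition \ref{The exact meaning of analytical subspace}, with local holomorphic extension $\tilde E^\C:N_{x_0}\to Gr(n,\C^d)$. By Lemma \ref{lem: holo map to complx Gr} applied to $\tilde E^\C$, after possibly shrinking $N_{x_0}$ there exist holomorphic $\C^d$-valued functions $w_1,\dots,w_n$ on $N_{x_0}$ with $\tilde E^\C=\operatorname{span}_\C\{w_1,\dots,w_n\}$. Write $w_i=u_i+i\,v_i$ on $N_{x_0}\cap\R$, so that $u_i:=\mathrm{Re}(w_i|_\R)$ and $v_i:=\mathrm{Im}(w_i|_\R)$ are real analytic $\R^d$-valued functions on $N_{x_0}\cap I$. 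For real $x$, $w_i(x)\in E^\C(x)=E(x)\oplus i\cdot E(x)$, so $u_i(x),v_i(x)\in E(x)$. A direct computation shows $\operatorname{span}_\R\{u_i(x),v_i(x):i=1,\dots,n\}=E(x)$: any $\xi\in E(x)\subset E^\C(x)$ equals $\sum_i (a_i+ib_i)w_i(x)$ for some $a_i,b_i\in\R$, and taking real parts using $\xi=\bar\xi$ yields $\xi=\sum_i(a_i u_i(x)-b_i v_i(x))$.

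It remains to pass from the $2n$ spanning vectors $\{u_i,v_i\}$ to exactly $n$ linearly independent real analytic vectors. At the point $x_0$, select $n$ vectors among $\{u_i(x_0),v_i(x_0)\}_{i=1}^n$ that form a basis of $E(x_0)$; the corresponding $n\times n$ minor of the matrix whose columns are these vectors (taking any $n$ coordinates that form a basis of $E(x_0)$'s image under a coordinate projection, or equivalently using a maximal minor argument) is nonzero at $x_0$. By real analyticity this minor stays nonzero on some subinterval $(x_0-\delta',x_0+\delta')$, so the selected $n$ real analytic vectors remain $\R$-linearly independent there and hence span the $n$-dimensional space $E$ throughout. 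Relabeling these as $v_1,\dots,v_n$ completes the argument. The main technical point to handle carefully is this extraction step, but it reduces to the standard observation that a spanning family in an analytically varying subspace of constant dimension admits an analytic basis on a neighborhood of any chosen point.
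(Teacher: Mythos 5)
Your proof is correct and follows essentially the same approach as the paper: both directions go through the complex extension $E^\C$, Lemma \ref{lem: holo map to complx Gr}, and for the "only if" part the real and imaginary parts of a local holomorphic spanning frame for $E^\C$. If anything, your handling of the final extraction step — selecting $n$ independent vectors from the $2n$ functions $\{u_i,v_i\}$ and keeping a nonvanishing $n\times n$ minor on a shrunken interval — is written out more carefully than the paper's, which nominally sets $v_i=\Re\tilde v_i$ without addressing that the real parts alone need not be independent.
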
\label{from tildeE to E}

\begin{remark}\label{local}
    The choice of $v_i, 1\leq i\leq n$ depend on the choice of the base point $x$. In general, the frames $\{v_i\}$ might not be globally defined on $\T$. 
\end{remark}
\begin{proof}
The proof of "if" part follows directly from Definition \ref{def: r-anal to c gr}. In the following we show the "only if" part.

By Definition \ref{The exact meaning of analytical subspace}, Lemma \ref{lem: holo map to complx Gr} and Definition \ref{def: r-anal to c gr}, for any $x\in J$ there exists a neighborhood $(x-\delta, x+\delta)$ and holomorphic $\C^d$-valued mappings $\tilde v_1,\dots, \tilde v_n$, such that $$E^\C|_{(x-\delta,x+\delta)}=(E\oplus i\cdot E)|_{(x-\delta,x+\delta)}=\mathrm {span}_\C(\tilde v_1,\dots, \tilde v_n).$$
Since \( S := \{\Re \tilde{v_i}(x) : 1 \leq i \leq n\} \cup \{\Im \tilde{v_i}(x) : 1 \leq i \leq n\} \) spans \( E(x) \), and given that \( \dim_{\mathbb{R}} E(x) = n \), we can select \( n \) vectors \( \{v_i(x) : 1 \leq i \leq n\} \) from \( S \) that span \( E(x) \). Moreover, since \( \delta \) is sufficiently small, we know that \( \{v_i : 1 \leq i \leq n\} \) spans \( E \) over the interval \( (x-\delta, x+\delta) \).
We show that $\R^d$ valued functions $v_i:= \Re \tilde v_i, 1\leq i\leq n$ are exactly  what we want. 
\begin{itemize}
    \item Real analyticity of $v_i$ follows the general fact that the real part of holomorphic functions are real analytic.
    \item From the definition of $E^\C$, for any vector $v\in E^\C$ we have that $\Re v\in E$ and $\Im v\in E$. So $v_i$ takes values in $E$.
    \end{itemize}
 \end{proof}

Now we consider some useful properties for analytic sub-bundles.
Let $V$ be a subspace of $\mathbb{R}^{2d}$, we denote by $V^{\perp}$  the (Euclidean) orthogonal complement of $V$,  and denote $V^{\bot_{sp}}$ the 
symplectic-orthogonal complement of $V$: 
   \begin{center}
       $V^{\bot_{sp}}=\{w\in\mathbb{R}^{2d}: v^{T}Jw=0$ for all $v \in V\}$
   \end{center}   
respectively. Moreover, 
for any real Grassmannian-valued map $E\in C^0(I, Gr(n, d)), n\leq d$, we denote the pointwise defined orthogonal complement (resp. symplectic-orthogonal complement) of $E$ by $E^\perp$ (resp. $E^{\perp_{sp}}$) respectively.
Then we have 

\begin{lemma}\label{The analytical projction}
    Let $1 \leq n \leq d$, $E \in C^\omega(\T,Gr(n,\R^{d}))$, then 
    \begin{enumerate}
   \item for any $w \in C^\omega(\T,\R^{d})$, if we denote by $P_{E(x)}w(x)$ the orthogonal projection from $w(x)$ to $E(x)$, then $P_{E(x)}w(x)\in C^\omega(\T,\R^{d})$.
        \item $E^{\perp}$ $\in C^\omega(\T,Gr(d-n,\R^d))$. 
        \item Moreover, if  $E(x)$ is symplectic subspace, then $E^{\perp_{sp}} \in C^\omega(\T,Gr(d-n,\R^{d})).$ 
    \end{enumerate}
\end{lemma}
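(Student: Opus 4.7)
The plan is to deduce all three statements from Lemma \ref{equi-def} by writing down explicit analytic formulas for the relevant projection operators (orthogonal in (1)--(2), symplectic-orthogonal in (3)). Since the projection onto $E(x)$ is intrinsically determined by $E(x)$, any local analytic formula for it must agree with itself on overlaps and hence defines a global real-analytic object on $\T$; this reduces the entire lemma to local constructions near a fixed $x_0 \in \T$.

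For (1), I would take a local analytic frame $v_1, \dots, v_n$ of $E$ on a neighborhood $U$ of $x_0$ furnished by Lemma \ref{equi-def}, and apply Gram--Schmidt, which uses only rational operations and square roots of strictly positive analytic quantities, producing an analytic orthonormal frame $e_1, \dots, e_n$ of $E|_U$. The orthogonal projection is then given on $U$ by
\[
P_{E(x)} w(x) = \sum_{i=1}^{n} \langle w(x), e_i(x)\rangle\, e_i(x),
\]
which is manifestly in $C^\omega(U, \R^d)$; patching gives $P_E w \in C^\omega(\T, \R^d)$. For (2), I would apply (1) with $w$ replaced by each standard basis vector $f_j$ of $\R^d$, producing analytic sections $g_j(x) := (I - P_{E(x)})f_j \in E^\perp(x)$. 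These span $E^\perp$ pointwise, so near any $x_0$ one may select indices $j_1, \dots, j_{d-n}$ for which $g_{j_1}(x_0), \dots, g_{j_{d-n}}(x_0)$ is a basis of $E^\perp(x_0)$; the corresponding Gram determinant is analytic and nonzero at $x_0$, hence nonzero on a neighborhood, so these sections form an analytic local frame of $E^\perp$, and Lemma \ref{equi-def} yields $E^\perp \in C^\omega(\T, Gr(d-n, \R^d))$.

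For (3), I would run the same scheme with the Euclidean inner product replaced by the symplectic form. Given a local analytic frame $V(x) = [v_1(x), \dots, v_n(x)]$ of $E$ on $U$, form the symplectic Gram matrix $M(x) := V(x)^T J V(x)$; the assumption that $E(x)$ is a symplectic subspace is exactly the statement that $M(x)$ is invertible, so $M^{-1} \in C^\omega(U, \GL(n,\R))$ and
\[
P_{E(x)} w(x) := V(x)\, M(x)^{-1}\, V(x)^T J\, w(x)
\]
defines an analytic local projection onto $E$ along $E^{\perp_{sp}}$; indeed $V^T J (I - P_E) w = V^T J w - M M^{-1} V^T J w = 0$, so $(I - P_E)w \in E^{\perp_{sp}}$, while $P_E w \in \mathrm{span}(V) = E$ by construction. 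Applying $I - P_{E(\cdot)}$ to the standard basis vectors as in (2) produces an analytic local frame of $E^{\perp_{sp}}$, and Lemma \ref{equi-def} again concludes.

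The step requiring the most care is (3), where the correct formula for the projection along $E^{\perp_{sp}}$ must be set up and the invertibility of $M$ must be extracted from the hypothesis that $E(x)$ is symplectic; once this is in place the argument from (2) transfers verbatim. Parts (1) and (2) are essentially bookkeeping around Lemma \ref{equi-def} together with the standard observation that Gram--Schmidt preserves real-analytic regularity, so no substantial obstacle is anticipated.
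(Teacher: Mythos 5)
Your argument for parts (1) and (2) follows the paper's proof essentially verbatim: a local analytic frame from Lemma \ref{equi-def}, Gram--Schmidt, the explicit formula $P_{E(x)}w(x)=\sum_j\langle w(x),e_j(x)\rangle e_j(x)$, the observation that the projection is intrinsic to $E(x)$ and hence globally well defined, and then projecting a fixed ambient basis to obtain a local analytic frame for $E^\perp$.

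For part (3), however, you take a genuinely different route. You build the symplectic projection directly: given a local frame $V$, you form the symplectic Gram matrix $M=V^TJV$, observe that invertibility of $M$ is precisely the hypothesis that $E(x)$ is symplectic, write $P_E w = V M^{-1}V^TJ w$, verify that $(I-P_E)w\in E^{\perp_{sp}}$, and then repeat the part-(2) argument with this new projection. This works, and it has the minor virtue of exhibiting the analytic projection onto $E$ \emph{along} $E^{\perp_{sp}}$ as an explicit closed formula, which could be useful elsewhere. The paper instead avoids any new computation by noting the pointwise identity $E^{\perp_{sp}}=(JE)^\perp$: since $v^TJw=-(Jv)^Tw$, a vector $w$ is symplectic-orthogonal to $E$ iff it is Euclidean-orthogonal to $JE$, and $JE\in C^\omega(\T,Gr(n,\R^d))$ because $J$ is a fixed linear isomorphism. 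Part (3) then follows instantly from part (2), without even using that $E$ is symplectic (the identity $E^{\perp_{sp}}=(JE)^\perp$ holds for any $E$, though the paper only states (3) under the symplectic hypothesis because that is the case it needs). So both arguments are correct; the paper's is shorter and more structural, yours is more explicit and produces an object (the symplectic projection operator) that the paper never needs.
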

\begin{proof}

Since $ E \in C^\omega(\mathbb{T}, Gr(n, \mathbb{R}^d)) $, for any fixed $ x_0 \in \mathbb{T} $, we can apply Lemma \ref{equi-def} to select a $ \delta > 0 $ and vectors $ v_1, v_2, \ldots, v_n \in C^\omega((x_0 - \delta, x_0 + \delta), \mathbb{R}^d) $ such that
$$
E(x)|_{(x_0 - \delta, x_0 + \delta)} = \operatorname{span}_{\mathbb{R}}(v_1(x), v_2(x), \ldots, v_n(x))
$$
By Gram-Schmidt orthogonalization, we got a set of orthonormal basis vectors $ e_1, e_2, \ldots, e_n \in C^\omega((x_0 - \delta, x_0 + \delta), \mathbb{R}^d) $, and then
$$
P_{E(x)}w(x) = \sum_{j=1}^n \langle w(x), e_j(x) \rangle e_j(x),
$$
it follows that $ P_{E(\cdot)}w \in C^\omega((x_0 - \delta, x_0 + \delta), \mathbb{R}^d) $.
Notice that the definition of $ P_{E(x)}w(x) $, the unique vector $ v \in E(x) $ such that $ v - w(x) $ is orthogonal to $ E(x) $, is independent of the choice of the basis $ e_1, \ldots, e_n $. Therefore, it is well-defined for $ x \in \mathbb{R} $. Furthermore, since $ P_{E(x)}w(x) $ is periodic, this establishes the desired result in statement (1).

Let $ w_1, w_2, \ldots, w_d \in \mathbb{R}^{d} $ form a basis of $ \mathbb{R}^{d} $. Define $ v_j(x) = w_j - P_{E(x)}w_j $. By statement (1), we have $ P_{E(x)}w_j \in C^\omega(\mathbb{T}, \mathbb{R}^{d}) $.
As $\dim E(x)^{\perp}=d-n$, then for any $ x_0 \in \mathbb{T} $, there exists $\delta>0$, and $ v_{j_1}(x), v_{j_2}(x), \ldots, v_{j_{d-n}}(x) \in C^\omega((x_0 - \delta, x_0 + \delta), \mathbb{R}^d) $, such that 
$$
E(x)^{\perp} = \operatorname{span}_{\mathbb{R}}(v_{j_1}(x), v_{j_2}(x), \ldots, v_{j_{d-n}}(x))
$$
 Furthermore, since $ E^{\perp}(x + 1) = E^{\perp}(x) $, by  (2) of Definition \ref{The exact meaning of analytical subspace} and Lemma \ref{equi-def}, we conclude that
$
E^{\perp} \in C^\omega(\mathbb{T}, Gr(d-n, \mathbb{R}^{d})).
$

   If $E\in  C^\omega(\T,Gr(n,\R^{d}))$ is symplectic, then $JE(x):=\{Jv: v\in E(x)\}\in C^\omega(\T,Gr(n,$ $\R^{d})$. 
Therefore $E^{\perp_{sp}}=(JE)^{\perp} \in C^\omega
    (\T,Gr(d-n,\R^{d}))$ by $(2)$.
\end{proof}

\subsection{Periodic real-analytic basis.}
The key step to show Proposition \ref{DIAGNALIZATION LEMMA} is to construct an analytic family of symplectic basis for invariant bundles. 
By holomorphic structure of the Grassmannian manifold $Gr(n,\mathbb{C}^d)$, then any periodic holomorphic complex subspaces can be equipped with a periodic holomorphic complex basis:

    \begin{theorem}\cite{1D}\label{COMPLEX LIFT}
        An analytic function $u \in C^\omega(\mathbb{T},Gr(n,\mathbb{C}^d))$ can be lifted to a one-periodic holomorphic function $\Tilde{u}: \T_\delta \rightarrow M_n(d)$ such that for any $ z \in \mathbb{T}_\delta, u(z)$ is spanned on $\mathbb{C}$ by the $n$ column vectors of $\Tilde{u}(z)$, where  $\mathbb{T}_\delta=\{z \in \mathbb{C}: |\Im z| <\delta \}$ and 
    $$M_n(d)=\{M\in \mathbb{C}^{d\times k}:\operatorname{rank}(M)=n\}.$$
         In other words, there exists $u_1,u_2,\dots,u_n \in C^\omega(\mathbb{T}_{\delta},\mathbb{C}^d) $ such that for any 
         $z \in \mathbb{T}_\delta$,
         \begin{center}          $u(z)=\operatorname{span}_\mathbb{C}\{u_1(z),u_2(z),\dots,u_n(z)\}$.
         \end{center}
    \end{theorem}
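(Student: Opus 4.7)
The plan is to translate the lifting problem into the triviality of a holomorphic vector bundle. Because $u$ is $1$-periodic, via the covering map $z\mapsto e^{2\pi iz}$ it descends to a holomorphic map $\bar u:\mathcal{A}_\delta \to Gr(n,\C^d)$ on the open annulus $\mathcal{A}_\delta:=\{w\in\C: e^{-2\pi\delta}<|w|<e^{2\pi\delta}\}$. Pulling back the tautological rank-$n$ subbundle $\gamma_n\subset Gr(n,\C^d)\times \C^d$ along $\bar u$ produces a holomorphic rank-$n$ subbundle $E:=\bar u^{*}\gamma_n\subset \mathcal{A}_\delta\times\C^d$. A $1$-periodic holomorphic lift $\tilde u$ of $u$ is precisely the same data as a global holomorphic frame of $E$ on $\mathcal{A}_\delta$, pulled back to $\T_\delta$ via the exponential.

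The first step is to establish local triviality of $E$. For any $w_0\in\mathcal{A}_\delta$, choose a $(d-n)$-dimensional subspace $V_0\subset\C^d$ transverse to $\bar u(w_0)$, together with an $n$-dimensional complement $W_0$ with basis $\{e_1,\ldots,e_n\}$. In the standard affine chart of $Gr(n,\C^d)$ associated with the splitting $\C^d=W_0\oplus V_0$, the map $\bar u(w)$ is represented near $w_0$ as the graph of a holomorphic linear map $L(w):W_0\to V_0$; hence $\{e_j+L(w)e_j\}_{j=1}^n$ is a holomorphic local frame of $E$ in a neighborhood of $w_0$.

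The second, and substantive, step is to upgrade these local frames to a global one. Here I would invoke a classical theorem of R\"ohrl: every holomorphic vector bundle over an open (non-compact) Riemann surface is holomorphically trivial. Equivalently, the Oka--Grauert principle identifies holomorphic and topological classifications on such manifolds, and all complex vector bundles over $\mathcal{A}_\delta\simeq S^1$ are trivial since $\pi_0(\GL(n,\C))$ is trivial. This yields global holomorphic sections $\bar u_1,\ldots,\bar u_n:\mathcal{A}_\delta\to\C^d$ spanning $\bar u(w)$ at every $w$, and then $\tilde u(z):=(\bar u_1(e^{2\pi iz}),\ldots,\bar u_n(e^{2\pi iz}))$ is the desired $1$-periodic holomorphic map $\T_\delta\to M_n(d)$.

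The main obstacle is invoking the R\"ohrl/Oka--Grauert result as a black box. A more elementary, self-contained alternative is Birkhoff factorization on the annulus: given the transition matrix $g(w)\in\GL(n,\C)$ between two local holomorphic frames on a sub-annulus of $\mathcal{A}_\delta$, one writes $g(w)=g_+(w)\cdot w^{\Lambda}\cdot g_-(w)$ where $g_+$ extends holomorphically to an interior disc, $g_-$ extends holomorphically across infinity, and $\Lambda=\operatorname{diag}(k_1,\ldots,k_n)\in\Z^n$. Since $E$ is topologically trivial (as $\mathcal{A}_\delta\simeq S^1$ and $\GL(n,\C)$ is connected), one must have $\Lambda=0$; then multiplying the local frames by $g_\pm^{-1}$ glues them into the required global holomorphic frame, producing $\tilde u$ directly without appealing to general Stein theory.
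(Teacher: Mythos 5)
Your main argument is correct and, as far as I can tell, is essentially the route taken in the cited source \cite{1D}: extend $u$ holomorphically to a strip, descend via $z\mapsto e^{2\pi i z}$ to an annulus $\mathcal{A}_\delta$, pull back the tautological rank-$n$ sub-bundle of $Gr(n,\C^d)\times\C^d$ to get a holomorphic sub-bundle $E\subset\mathcal{A}_\delta\times\C^d$, verify local triviality through the affine charts of the Grassmannian, and then appeal to the Grauert/R\"ohrl theorem (every holomorphic vector bundle over an open Riemann surface, hence Stein, is holomorphically trivial, since the topological classification over a space homotopy equivalent to $S^1$ is trivial as $\GL(n,\C)$ is connected). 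Pulling the resulting global holomorphic frame back through the exponential gives the $1$-periodic $\tilde u$. That argument is sound and self-contained.

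Your more elementary Birkhoff-factorization alternative, however, contains a genuine error in the step ``$E$ topologically trivial $\Rightarrow \Lambda=0$.'' Topological triviality does not force the partial indices $\Lambda=\operatorname{diag}(k_1,\dots,k_n)$ to vanish: only the sum $\sum k_i$ (the winding number of $\det g$) is a topological invariant, and even it imposes no constraint here since every complex vector bundle over a space homotopy equivalent to $S^1$ is trivial regardless of the individual $k_i$. (Compare $\mathcal{O}(1)\oplus\mathcal{O}(-1)$ over $\mathbb{P}^1$, which is topologically trivial but has nonzero partial indices.) The argument is nevertheless repairable for a different reason: the annulus $\mathcal{A}_\delta$ is contained in $\C^{*}$, so $w\mapsto w^{\Lambda}$ is a single-valued, holomorphic, invertible matrix function on all of $\mathcal{A}_\delta$ and can simply be absorbed into $g_+$ (or $g_-$); after that absorption the transition becomes a coboundary and the two local frames glue as you intend. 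So the elementary route does produce the global frame, but not because $\Lambda$ vanishes.
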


 From Theorem \ref{COMPLEX LIFT}, we observe that in the case of the complex Grassmannian  $Gr(n,\C^d)$, it is possible to find a globally defined periodic holomorphic basis. In contrast, as noted in Remark \ref{local}, the basis in the real Grassmannian $Gr(n,\R^d)$ is typically defined only locally. However, we show it is possible to construct the basis globally if we allow one vector(-valued function) in the basis might be anti-periodic.
 

   \begin{proposition}\label{REAL LIFT}
       Let $E \in C^\omega(\mathbb{T},Gr(n,\R^d))$. There exists 
           $v_1 \in C^\omega(\mathbb{T},\mathbb{PR}^d),v_2,\dots,v_n \in C^\omega(\mathbb{T},\mathbb{R}^d)$
        such that for all $x \in \T$,
     $E(x)=\operatorname{span}_\mathbb{R}(v_1(x),v_2(x),\dots,v_n(x))$.  \end{proposition}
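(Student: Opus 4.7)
The plan is to induct on the rank $n$; the base case isolates the topological twisting of the bundle (which is the only source of antiperiodicity), while the induction step uses a generic orthogonal projection to peel off one periodic section.

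For the base case $n=1$, $E$ is a real-analytic line sub-bundle of the trivial bundle $\T\times\R^d$, and I need a non-vanishing analytic section $v_1:\R\to\R^d$ with $v_1(x)\in E(x)$ and $v_1(x+1)=\pm v_1(x)$. I would cover $[0,1+\varepsilon]$ by finitely many open intervals on which Lemma \ref{equi-def} produces local analytic non-vanishing sections; on each connected overlap two such local sections differ by an analytic non-vanishing real scalar whose sign is constant, so by successively rescaling them by $\pm 1$ one can patch them into a single analytic non-vanishing $\tilde s:[0,1+\varepsilon]\to\R^d$ with $\tilde s(x)\in E(x)$. Normalising to $\bar s(x):=\tilde s(x)/\sqrt{\langle\tilde s(x),\tilde s(x)\rangle}$ preserves analyticity, and on the overlap $[0,\varepsilon]$ the two unit vectors $\bar s(x)$ and $\bar s(x+1)$ lie in the one-dimensional space $E(x)=E(x+1)$, so they differ by a sign; connectedness forces this sign to be a single constant $\epsilon\in\{\pm 1\}$. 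Extending by $\bar s(x+k)=\epsilon^k\bar s(x)$ yields the required section.

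For the inductive step ($n\ge 2$), fix $w\in\R^d$ and set $v_n(x):=P_{E(x)}(w)$, which by Lemma \ref{The analytical projction} belongs to $C^\omega(\T,\R^d)$. The vector $v_n(x)$ vanishes exactly when $w\in E(x)^\perp$, and the ``bad'' set $\bigcup_{x\in\T}E(x)^\perp$ is the image of an analytic map from a $(d-n+1)$-dimensional manifold (the $\T$-parametrised family of $(d-n)$-planes $E(x)^\perp$) into $\R^d$; since $n\ge 2$ one has $d-n+1\le d-1<d$, so this image is a proper analytic subset of $\R^d$ and a generic $w$ makes $v_n$ non-vanishing on $\T$. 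Then $F(x):=E(x)\cap v_n(x)^\perp\in C^\omega(\T,Gr(n-1,\R^d))$ --- analyticity follows by Gram--Schmidt-removing the $v_n$-component from a local analytic basis of $E$ provided by Lemma \ref{equi-def}, which is permissible because $|v_n|^2$ is analytic and positive. Applying the induction hypothesis to $F$ gives $v_1\in C^\omega(\T,\mathbb{PR}^d)$ and $v_2,\ldots,v_{n-1}\in C^\omega(\T,\R^d)$ spanning $F$; together with $v_n$ these form the desired basis of $E$.

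The main obstacle is the base case, which is precisely where the topological twisting --- the potential antiperiodicity --- actually enters, reflecting the classification of real-analytic line bundles over $\T$ by $H^1(\T;\Z/2)$. The induction step is a straightforward codimension count that is insensitive to orientation; the orientation obstruction is automatically pushed into the one-dimensional sub-bundle produced at the final stage of the induction, which is precisely the vector $v_1$.
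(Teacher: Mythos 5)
Your approach mirrors the paper's: induct on $n$, handle the rank-one case by tracking the monodromy of a unit section around $\T$, and in the inductive step split off a globally periodic section by projecting a generic constant vector $w$ onto $E(x)$. The inductive step is essentially identical to the paper's, including the dimension count that shows $\bigcup_{x\in\T}E(x)^\perp\subsetneq\R^d$ for $n\ge 2$ and the reduction to the orthogonal complement $E_1=E\cap(P_{E}w)^\perp$ (the paper justifies analyticity of $E_1$ by transversality where you use Gram--Schmidt; either works).

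The base case, however, contains a genuine gap in the patching step. From Lemma \ref{equi-def} you get local analytic non-vanishing sections $s_i$, and on an overlap $s_i=\lambda_{ij}s_j$ with $\lambda_{ij}$ analytic, non-vanishing, of constant sign. You then claim that ``by successively rescaling them by $\pm 1$ one can patch them into a single analytic non-vanishing $\tilde s$.'' But a sign flip only makes $\lambda_{ij}$ positive --- it does not force $\lambda_{ij}\equiv 1$ --- so the rescaled local sections still disagree on overlaps and no well-defined analytic $\tilde s$ results from this step. The fix is to normalize \emph{before} patching: replace each $s_i$ by $s_i/|s_i|$ (analytic, since $|s_i|^2$ is analytic and positive). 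Then on a connected overlap the two unit sections in the one-dimensional space $E(x)$ differ exactly by a constant sign, and sign flips along a chain of intervals genuinely glue them into a single unit analytic section on $[0,1+\varepsilon]$. With that correction, your monodromy comparison of $\bar s(x)$ and $\bar s(x+1)$ is sound and recovers the paper's conclusion, which it phrases as the unique unit analytic solution of $v(x)\in E(x)$, $v(0)=v_0$ on $\R$ satisfying $v_1(1)\in\{\pm v_1(0)\}$.
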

\begin{proof}We prove this by induction for $n=\dim E$. For $n=1$,
by Lemma \ref{equi-def}, for any $x$, there exists  
$\tilde{v}_1 \in C^{\omega}((x-\delta, x+\delta),\R^d ),$ 
 such that we have $E|_{(x-\delta, x+\delta)}=\operatorname{span}_{\R}\{\tilde{v}_1\}.$    
let $E(0)\cap \mathbb{S}^d=\{v_0,-v_0\}$, then by analytic continuation theorem,  
the following initial value problem
\begin{equation}\label{eq1}
\left\{
    \begin{array}{cc}
       & v(x)\in E(x) \qquad
         v(0)=v_0 \\
        & v \in C^\omega(\mathbb{R},\mathbb{R}^d)
    \end{array}
\right.
\end{equation}
has a unique solution, we denote it by $v_1$, and 
either $v_1(1)=-v_0=-v_1(0)$, or $v_1(1)=v_0=v_1(0)$. By a continuity argument, the former condition implies $v_1(x)=-v_1(x+1)$ and the latter implies $v_1$ is $1$-periodic, which completes the proof for $n=1$.


If $n \geq 2$, we have $\operatorname{dim}_{\R}(E(x)^\bot)=d-n \leq d-2$. Therefore by smoothness of $E$ on $x$, the union $\bigcup_{x \in \mathbb{T}}E(x)^\bot \neq \mathbb{R}^d$.  Take an arbitrary $w \in \mathbb{R}^d- \bigcup_{x \in \mathbb{T}}E(x)^\bot$ and denote the orthogonal projection of $w$ to $E(x)$ by $P_{E(x)}w$, then $P_{E(x)}w \neq 0$ for any $x \in \mathbb{T}$. 
By Lemma \ref{The analytical projction}, $P_{E(x)}w\in C^\omega(\T,\R^{d})$. We denote the orthogonal complement of $P_{E(x)}w$ in $E(x)$ by $E_1(x)$, then $E_1\in C^\omega(\T, Gr(n-1,\R^{d}))$. In fact, by Lemma \ref{The analytical projction}, $
(P_{E(\cdot)}w)^{\perp} \in C^\omega(\T,Gr(d-1,\R^d))$, since $E$ and $
(P_{E(x)}w))^{\perp}$ intersect transversely, their intersection $E_1$ is also analytic. By the assumption of induction we can take $v_1 \in C^\omega(\T,\mathbb{PR}^{d})$ and $v_2,\cdots,v_{n-1} \in C^\omega(\T,\R^{d})$ with 
 $E_1(x)=\operatorname{span}_{\R}\{v_1(x), v_2(x), \cdots, v_{n-1}(x)\}.$ Take $v_n(x)=P_{E(x)}w$ then $\{v_1,v_2,\cdots,v_n\}$ satisfies the requirements.
\end{proof}
   
\subsection{Periodic analytic symplectic basis.}The second step to prove Proposition \ref{DIAGNALIZATION LEMMA} is to apply Proposition \ref{REAL LIFT} and a symplectic orthogonal process to get symplectic basis, i.e. Proposition \ref{REAL SYMPLECTIC LIFT}. Before it, we state an easy fact.


      \begin{lemma}\label{TAULEMMA}
          Let $E \in C^\omega(\mathbb{T},Gr(n,\mathbb{R}^d))$ and  $v_1,v_2,\cdots,v_n \in C^\omega(\mathbb{T},\mathbb{PR}^d)$. If 
          $$E(x)=\operatorname{span}_\mathbb{R}(v_1(x),v_2(x),\cdots,v_n(x))$$ 
          for any $x \in \mathbb{T}$, then $\tau(E)=\prod_{k=1}^n\tau(v_k)$.
      \end{lemma}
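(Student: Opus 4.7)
The plan is to use the Plücker embedding, which is precisely the device the authors introduced just before stating the lemma when they wrote $Gr(n,\R^d)\subset \mathbb{PR}^{C_d^n}$ via $\mathrm{span}_\R(v_1,\dots,v_n)\mapsto [v_1\wedge\cdots\wedge v_n]$. Under this identification, the invariant $\tau$ of a real-analytic projective curve reduces to a sign computation on the wedge product, and the multilinearity of $\wedge$ instantly gives the claimed product formula.

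Concretely, I would first fix canonical analytic lifts $\tilde v_k:\R\to\R^d$ of each $v_k\in C^\omega(\T,\mathbb{PR}^d)$, so that by the definition of $\tau$ we have
\[
\tilde v_k(x+1)=\tau(v_k)\,\tilde v_k(x),\qquad k=1,\dots,n.
\]
Form the real-analytic map $\omega:\R\to \Lambda^n\R^d\cong \R^{C_d^n}$ defined by $\omega(x):=\tilde v_1(x)\wedge\tilde v_2(x)\wedge\cdots\wedge\tilde v_n(x)$. Because $E(x)=\mathrm{span}_\R(v_1(x),\dots,v_n(x))$ has dimension $n$, the vectors $\tilde v_1(x),\dots,\tilde v_n(x)$ are linearly independent for every $x$, so $\omega(x)\neq 0$; hence $[\omega(x)]\in\mathbb{PR}^{C_d^n}$ and this projective class is exactly the Plücker image of $E(x)$. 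In other words, $\omega$ is a (in fact, \emph{the}) canonical analytic lift of $E$ in the sense used to define $\tau(E)$.

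Next, by multilinearity and skew-symmetry of the wedge product,
\[
\omega(x+1)=\tilde v_1(x+1)\wedge\cdots\wedge \tilde v_n(x+1)=\Bigl(\prod_{k=1}^n\tau(v_k)\Bigr)\tilde v_1(x)\wedge\cdots\wedge\tilde v_n(x)=\Bigl(\prod_{k=1}^n\tau(v_k)\Bigr)\omega(x).
\]
Comparing with the defining relation $\omega(x+1)=\tau(E)\,\omega(x)$ and using $\omega(0)\neq 0$ in $\R^{C_d^n}$, we conclude $\tau(E)=\prod_{k=1}^n\tau(v_k)$.

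There is really no obstacle in this argument: once the Plücker embedding is in place, the whole statement is a one-line sign count on wedge products. The only minor care needed is to notice that the lift $\omega$ of $E$ to $\R^{C_d^n}\setminus\{0\}$ is automatically analytic (as an algebraic polynomial expression in the components of the analytic vectors $\tilde v_k$) and nowhere vanishing, so it is legitimately the lift used to read off $\tau(E)$.
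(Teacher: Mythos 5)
Your proof is correct and follows essentially the same route as the paper: the paper's entire argument is the one-line identity $v_1(x+1)\wedge\cdots\wedge v_n(x+1)=\prod_{k=1}^n\tau(v_k)\,v_1(x)\wedge\cdots\wedge v_n(x)$ under the Plücker embedding, which is precisely what you spelled out in more detail.
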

      \begin{proof}
          This follows directly from 
          $v_1(x+1)\wedge\cdots\wedge v_n(x+1)=\prod_{k=1}^n\tau(v_k)v_1(x)\wedge\cdots\wedge v_n(x).$        
      \end{proof}
Proposition \ref{REAL LIFT} and Lemma \ref{TAULEMMA} implies that if  $\tau (E)=-1$, then one can always select $\tau(v_1)=-1$. 
   
   \begin{proposition}\label{REAL SYMPLECTIC LIFT} We have the following conclusions:
\begin{enumerate}
    \item Let $n<d$, $E \in C^\omega(\mathbb{T},Gr(2n,\R^{2d}))$ is a symplectic subspace. Then there exists  $ v_{\pm 1} ,v_{\pm 2},\dots,v_{\pm n}
\in C^\omega(\mathbb{T},\mathbb{R}^{2d})$
which form a symplectic basis of $E(x)$ for every $x\in \T$.

\item If $E(x)=E^+(x)\oplus E^-(x)$, where $E^{\pm} \in C^\omega(\T,Gr(n,\R^{2d}))$  are isotropic subspaces, then there exists $v_{\pm 1} \in C^\omega(\mathbb{T},\mathbb{PR}^{2d}),v_{\pm 2},\dots,v_{\pm n} \in C^\omega(\mathbb{T},\mathbb{R}^{2d}),$     
such that $\tau(v_1)=\tau(v_{-1})=\tau(E^+)=\tau(E^-)$ and for any $x \in \mathbb{T}$, the set
       \begin{center}
           $\{v_1(x),v_2(x),\dots,v_n(x),v_{-1}(x),v_{-2}(x),\dots,v_{-n}(x)\}$
       \end{center}
        is a symplectic basis of $E(x)$, with   
    $$E^+(x)=\operatorname{span}_{\mathbb{R}}(v_1(x),v_2(x),\dots,v_n(x)), \qquad 
       E^-(x)=\operatorname{span}_{\mathbb{R}}(v_{-1}(x),v_{-2}(x),\dots,v_{-n}(x)).$$
     \end{enumerate}    
   \end{proposition}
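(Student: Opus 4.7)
\smallskip

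\noindent\emph{Proof proposal.} The plan is to combine Proposition~\ref{REAL LIFT}, which supplies an analytic basis for any subbundle with at most one antiperiodic vector, with a symplectic Gram--Schmidt procedure carried out analytically. The two parts differ in how much antiperiodicity can be eliminated; part~(2), where two antiperiodic vectors must survive, is the delicate one.

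For part~(1) we first observe that $\tau(E)=1$ for any symplectic subbundle $E$ of rank $2n$. Indeed, the symplectic form $(v,w)\mapsto v^{T}Jw$ restricts to a non-degenerate $2$-form on $E$ whose $n$-th exterior power is a nowhere-vanishing analytic section of $\Lambda^{2n}E^{\ast}$; pairing it with the wedge of any basis supplied by Proposition~\ref{REAL LIFT} produces a nowhere-vanishing analytic real function on $\T$, which by Lemma~\ref{TAULEMMA} would be antiperiodic if $\tau(E)=-1$, contradicting the intermediate value theorem. Hence Proposition~\ref{REAL LIFT} yields a fully periodic analytic basis $u_1,\dots,u_{2n}$ of $E$. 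We induct on $n$: setting $v_1:=u_1$ and $a_k(x):=u_1(x)^{T}Ju_k(x)$, define
$$v_{-1}(x):=\sum_{k=2}^{2n}\frac{a_k(x)}{\sum_{l=2}^{2n}a_l(x)^{2}}\,u_k(x).$$
The denominator is nowhere zero because $u_1\notin E^{\perp_{sp}}$ (as $E$ is symplectic), so $v_{-1}$ is analytic, periodic, and $v_1^{T}Jv_{-1}\equiv 1$. Then $F(x):=(\operatorname{span}_{\R}(v_1(x),v_{-1}(x)))^{\perp_{sp}}\cap E(x)$ is an analytic symplectic subbundle of rank $2n-2$ (analyticity from Lemma~\ref{The analytical projction}), and the inductive hypothesis furnishes $v_{\pm 2},\dots,v_{\pm n}$.

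For part~(2), set $\sigma:=\tau(E^+)=\tau(E^-)$; the equality holds because the non-degenerate $J$-pairing $E^+\times E^-\to\R$ yields $\Lambda^{n}E^-\cong(\Lambda^{n}E^+)^{\ast}$, which share the same $\tau$. Apply Proposition~\ref{REAL LIFT} to $E^{\pm}$ separately to obtain analytic bases $u_1,\dots,u_n$ of $E^+$ and $w_1,\dots,w_n$ of $E^-$ with $\tau(u_1)=\tau(w_1)=\sigma$ and all other basis vectors periodic. Form the matrix $A(x):=\bigl(u_i(x)^{T}Jw_j(x)\bigr)_{i,j=1}^{n}$, which is invertible because $J$ pairs the Lagrangian factors non-degenerately. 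A direct check shows
$$A(x+1)=SA(x)S,\qquad S:=\begin{cases}I_n,&\sigma=+1,\\ \mathcal{T}_n,&\sigma=-1,\end{cases}$$
and the same rule for $A^{-1}$ since $S^{2}=I$. We set $v_{-j}:=w_j$ and $v_i:=\sum_{k}(A^{-1})_{ik}u_k$. Symplecticity is immediate: the products $v_{\pm i}^{T}Jv_{\pm j}$ within each Lagrangian factor vanish by isotropy of $E^{\pm}$, while $v_i^{T}Jv_{-j}=\sum_k(A^{-1})_{ik}A_{kj}=\delta_{ij}$. Tracking periodicity, $v_i(x+1)=S_{ii}v_i(x)$ and $v_{-j}(x+1)=S_{jj}v_{-j}(x)$, giving exactly $\tau(v_{\pm 1})=\sigma$ and $\tau(v_{\pm i})=1$ for $i\geq 2$, as required.

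The main obstacle is the periodicity bookkeeping in part~(2): a naive symplectic Gram--Schmidt would destroy the required antiperiodicity pattern. The clean choice above works precisely because $\mathcal{T}_n$ is an involution, so $A^{-1}$ inherits the transformation law of $A$; the two conjugating copies of $\mathcal{T}_n$ then combine with the antiperiodicity of the $u_k$ to leave a single diagonal factor $(\mathcal{T}_n)_{ii}$ acting on $v_i$, which is exactly the required $\tau$-profile. The only other subtlety, the analyticity of the symplectic complement $F$ in part~(1), is handled by Lemma~\ref{The analytical projction} together with the fact that $F$ has constant rank $2n-2$.
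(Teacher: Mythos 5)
Your proof is correct and takes a genuinely different, arguably cleaner route in part~(2), while part~(1) is closer in spirit to the paper's argument but streamlined. The paper proves both parts by the same induction: at each stage it expands $J\hat v_n$ in a full basis of $\R^{2d}$ (using a basis of $E^{\perp_{sp}}$ as well), projects to obtain the symplectic partner $\tilde v_{-n}$, and passes to the symplectic complement $E_1^{\perp_{sp}}\cap E$; in part~(2) it additionally tracks the $\tau$-data pair by pair (showing $\tau(\hat v_1)=\tau(\hat v_{-1})$ via the nondegeneracy of $\hat v_1^{T}J\hat v_{-1}$ and the intermediate value theorem). You instead observe upfront in part~(1) that $\tau(E)=1$ for any symplectic subbundle (pairing the volume form $\omega^{n}|_E$ against a basis from Proposition~\ref{REAL LIFT}), so all basis vectors can be taken periodic, and then use the compact explicit formula $v_{-1}=\sum_{k\ge 2}a_k(\sum_l a_l^{2})^{-1}u_k$ rather than the paper's expansion of $J\hat v_n$; both then recurse through the symplectic complement. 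The real divergence is part~(2): you replace the induction entirely by a one-shot symplectic Gram--Schmidt, forming the $n\times n$ Gram matrix $A_{ij}=u_i^{T}Jw_j$ of the Lagrangian pairing, proving $A(x+1)=SA(x)S$ with $S$ the involution $I_n$ or $\mathcal T_n$, and defining $v_i=\sum_k (A^{-1})_{ik}u_k$, $v_{-j}=w_j$; the involution property of $S$ makes the required periodicity bookkeeping fall out automatically, since $A^{-1}$ inherits the same conjugation law. This direct construction is more transparent than the paper's inductive reduction and isolates the essential algebraic reason the antiperiodicity concentrates in a single index. Both approaches rely on the same inputs (Proposition~\ref{REAL LIFT}, Lemma~\ref{TAULEMMA}, Lemma~\ref{The analytical projction}) and reach identical conclusions; yours is a legitimate and somewhat tidier alternative.
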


   \begin{proof}
We prove $(1)$  by induction. If $n=1$,
by Proposition \ref{REAL LIFT}, there exists $ \hat{v}_{1} \in C^{\omega}(\T,\mathbb{PR}^d)$ and $\hat{v}_{-1} \in C^{\omega}(\T,{\R}^d),$ such that $$E(x)=\operatorname{span}_{\R}(\hat{v_1}(x_0),\hat{v}_{-1}(x_0)).$$
We assert that $\tau(\hat{v}_1)=1$, hence it is a periodic $\R^d$-valued function. If it is not the case, $\hat{v}_{-1}(x+1)^tJ\hat{v}_1(x+1)=-\hat{v}_{-1}(x)^t J\hat{v}_1(x)$ implies that there exists $ x_0 \in \T$ such that $\hat{v}_{-1}(x_0)^t J\hat{v}_1(x_0)=0$. Hence $\hat{v}_1(x_0) \in E(x) \cap E(x)^{\bot_{sp}}=\{0\},$ which is a contradiction.
Let $v_1(x)=\frac{\hat{v}_{1}(x)}{\hat{v}_{-1}(x)^t J\hat{v}_{1}(x)}$ and $v_{-1}(x)=\hat{v}_{-1}(x)$ we get the case for $n=1$.

Assume the statement holds for 
$n-1$, 
now we consider some $E \in C^\omega(\T, {Gr}(2n, 2d))$. By Lemma \ref{The analytical projction}, $E^{\perp_{sp}} \in C^{\omega}(\T,Gr(2d-2n,\R^{2d}))$. By Proposition \ref{REAL LIFT}, there exist vectors $\hat{v}_1,\cdots,\hat{v}_{d},\hat{v}_{-1},\cdots,\hat{v}_{-d}\in C^{\omega}(\T, \mathbb{P} \mathbb{R}^{2d})$ such that for for all $j \neq 1,n+1$, $\hat{v}_j$ can be realized as an element in $C^\omega(\T, \R^{2d})$
such that 
\begin{equation}\label{span E(x)}    E(x)=\operatorname{span}_{\mathbb{R}}\left(\hat{v}_1(x), \cdots, \hat{v}_n(x), \hat{v}_{-1}(x), \cdots, \hat{v}_{-n}(x)\right),
\end{equation}
\begin{equation}\label{span E(x) perp}    E^{\perp_{sp}}(x)=\operatorname{span}_{\mathbb{R}}\left(\hat{v}_{n+1}(x), \cdots, \hat{v}_d(x), \hat{v}_{-(n+1)}(x), \cdots, \hat{v}_{-d}(x)\right).
\end{equation}
By the same argument as above, the non-degeneracy of the symplectic form restricted to $E$ and $E^{\perp_{sp}}$ ensures 
$\tau(\hat{v}_1)=1.$ 
and $\tau(\hat{v}_{n+1})=1.$ Hence all $\hat v_i$ could be realized as periodic functions. Therefore
there exist $ a_j \in C^\omega(\T,\R) (1 \leqslant \left|j\right| \leqslant d)$ such that 
\begin{equation}\label{comparision}
    J \hat{v}_n(x)=\sum_{1 \leqslant \left|j\right| \leqslant d} a_j(x) \hat{v}_j(x)
\end{equation}
for any $x \in \T$,
we set $\tilde{v}_{-n}(x)=\sum_{j=1}^{n} a_{-j}(x) \hat{v}_{-j}(x)+\sum_{j=1}^{n-1} a_j(x) \hat{v}_j(x)$, find that $\tilde{v}_{-n} \in C^\omega(\T,\R^{2d})$ and  $\tilde{v}_{-n}(x) \in E(x)$ for any $x \in \T$. Since $\hat{v}_j(\cdot)\in E^{\perp_{sp}}$ for $\left|j\right|>n$, we have 
\begin{equation}\label{Omega=Omega}
    \Omega(\tilde{v}_{-n}(x), \hat{v}_n(x))=\Omega(J\hat{v}_{n}(x), \hat{v}_n(x))=\left\| \hat{v}_n(x)\right\|^2>0.
\end{equation}
Since $E_1(x)=\operatorname{span}_{\R}\{\tilde{v}_{-n}(x),\hat{v}_n(x)\}$ is a symplectic subspace of $\R^{2d}$, by Lemma \ref{The analytical projction} it holds that $E_1^{\perp_{sp}}\cap E \in C^\omega(\T,Gr(2n-2,\R^{2d}))$, so we can find $v_{n},v_{-n} \in C^\omega(\T,\R^{d})$ forming a symplectic  basis of $E_1$ and $v_{1},\cdots,v_{n-1},v_{-1},\cdots,v_{-(n-1)} \in C^\omega(\T,\R^{d})$ forming a symplectic basis of $E_1^{\perp_{sp}}\cap E$ by the inductive assumption. Then it is not hard to see $\{v_{\pm 1},\dots ,v_{\pm n}\}$ form a symplectic basis of $E$, which completes the proof of (1).

We prove the item (2)  by a similar induction process. For $n=1$, by Proposition \ref{REAL LIFT} there exists $\hat{v}_1, \hat{v}_{-1} \in C^\omega(\T, \mathbb{P} \mathbb{R}^{2 d})$ such that for any $x \in \T$, $$E^{+}(x)=\operatorname{span}_{\mathbb{R}}\left(\hat{v}_1(x)\right),E^{-}(x)=\operatorname{span}_{\mathbb{R}}(\hat{v}_{-1}(x)).$$
Since $E^{-}(x) \oplus E^{+}(x)$ is a symplectic space, an assumption similar to the proof of $(1)$ yields
$ \hat{v}_1^{t}(x) J \hat{v}_{-1}(x) \neq 0$  for all $x \in \R.$
Then 
$$\hat{v}_1(x+1)^{t} J \hat{v}_{-1}(x+1)=\tau\left(\hat{v}_1\right) \tau\left(\hat{v}_{-1}\right) \hat{v}_1(x)^{t} J \hat{v}_{-1}(x),$$ implies that $ \tau\left(\hat{v}_1\right)=\tau\left(\hat{v}_{-1}\right).$
Set $v_1(x)=\frac{\hat{v}_{1}(x)}{\hat{v}_{-1}(x)^tJ\hat{v}_{1}(x)}$ and $v_{-1}(x)=\hat{v}_{-1}(x)$ confirming the case for $n=1$.

Assume (2) holds for $n-1$. 
Then by Proposition \ref{REAL LIFT} and  $(1)$, there exist vectors $\hat{v}_1,\cdots,\hat{v}_{d},\hat v_{-1},\cdots,\hat v_{-d}\in C^{\omega}(\T, \mathbb{P} \mathbb{R}^{2d})$ with $\tau(\hat v_j)=1$ for all $j \neq \pm1$, such that \eqref{span E(x)} and \eqref{span E(x) perp} holds, moreover
$$E^+(x)=\operatorname{span}_{\mathbb{R}}\left(\hat{v}_{1}(x), \cdots, \hat{v}_{n}(x)\right), \quad E^-(x)=\operatorname{span}_{\mathbb{R}}\left(  \hat{v}_{-1}(x), \cdots, \hat{v}_{-n}(x)\right).
$$
Then \eqref{comparision} still holds for some $a_j \in C^\omega(\R,\mathbb{PR}^1)$ with $$a_j(x+1)=\tau(a_j) \cdot a_j(x), \forall x\in \R,$$ and $\tau(a_j)=\tau(\hat v_j)$ \footnote{Here we actually consider the expansion for $J\hat{v}_n$ with respect to $2d$ directions $\R\cdot \hat{v}_j$ rather than with respect to the basis $\{\hat{v}_{\pm j}\}$. Since each $\R\cdot \hat{v}_j$ is periodic, each component $a_j\cdot \hat v_j$ is periodic even though each $\hat v_j$ may not be periodic.}. Set $\hat{v}_{-n}(x)=\sum_{j=1}^{n} a_{-j}(x) \hat{v}_{-j}(x)$ and find that $\hat{v}_{-n} \in C^\omega(\T,\R^{2d})$ and \eqref{Omega=Omega} still holds. 
Set $E_1^{+}=\operatorname{span}_{\R}\{\hat{v}_n\},E_1^{-}=\operatorname{span}_{\R}\{\hat{v}_{-n}\},E_1=E_1^+ \oplus E_1^-$,  one can easily verify $E_1^\pm$ are isotropic subspaces and $E_1$ is a $2$-dimensional symplectic subspace.
Consider $F^{\pm}:=E_1^{\perp_{sp}}\cap E^{\pm} \in C^\omega(\T,Gr(n-1,2d))$ respectively. By elementary arguments in linear algebra, $F^\pm$ are both $n-1$ dimensional isotropic subspaces and $F:=F^+ \oplus F^-$ is a $2n-2$-dimensional symplectic subspace. 

By the inductive assumption, we can take  $v_1,\cdots,v_{n-1},v_{-1},\cdots,v_{-(n-1)}$ in $ C^\omega(\T,\mathbb{PR}^{2d})$ forming a symplectic basis for $F$, with $\tau(v_j)=1$ except $j=\pm1$ 
 and $$F^{\pm}=\operatorname{span}\{v_{\pm1},\cdots,v_{\pm({n-1})}\}$$ respectively. Similarly, since $E_1=E_1^+ \oplus E_1^-$ is a symplectic subspace, and $E_1^{\pm} $ are isotropic subspaces with $\operatorname{dim}(E_1)=2$, by the case $n=1$ there exists a symplectic basis $\{v_n,v_{-n}\}$ for $E_1$ with that $E_1^{\pm}=\operatorname{span}_{\R}\{v_{\pm n}\}$ respectively. Note that  $\tau(E_1^{\pm})=1$ implies that $\tau(v_{\pm n})=1$. Using the fact that $E_1^+\perp_{sp} F^-, E_1^-\perp_{sp} F^+$, the set $\{v_1,\cdots,v_{n},v_{-1},\cdots,v_{-n}\}$ forms a symplectic basis of $E$ with the desired property. Hence we complete the proof of (2) for $n$.
\end{proof}

\subsection{Proof of Proposition \ref{DIAGNALIZATION LEMMA} }
Now we  finish the whole proof of Proposition \ref{DIAGNALIZATION LEMMA}.

If  $\tau(E^u)=\tau(E^s)=1$, by Lemma \ref{U,S,C symplectic properties} and Proposition \ref{REAL SYMPLECTIC LIFT}, we can take 
vectors $ v_{\pm 1},v_{\pm 2},\dots,v_{\pm d}\in C^\omega(\mathbb{T},\mathbb{R}^{2d})$, such that 
    \begin{center}
     $\{v_1(x),v_2(x),\cdots,v_n(x),v_{-1}(x),v_{-2}(x),\cdots,v_{-n}(x)\}$
\end{center}     
$$\{v_{n+1}(x),v_{n+2}(x),\cdots,v_d(x),v_{-n-1}(x),v_{-n-2}(x),\cdots,v_{-d}(x)\}$$
forms a symplectic basis of $E^u(x)\oplus E^s(x)$ and     $E^c(x)$ separately. Moreover, 
\begin{equation*}      E^u(x)=\operatorname{span}_{\R}(v_1(x),v_2(x),\cdots,v_n(x)),     \quad          E^s(x)=\operatorname{span}_{\R}(v_{-1}(x),v_{-2}(x),\cdots,v_{-n}(x)),
       \end{equation*}  
It follows that
     $$\{v_1(x),v_2(x),\cdots,v_d(x),v_{-1}(x),v_{-2}(x),\cdots,v_{-d}(x)\}$$
 forms a symplectic basis for $\R^{2d}$, and if we define the matrix 
\begin{equation}\label{DEFINITIONOF T}
    T(x)=\begin{pmatrix}
    v_1(x),\cdots,v_d(x),v_{-1}(x),\cdots,v_{-d}(x)
\end{pmatrix}
\end{equation}
then $T(x) \in C^\omega
(\T,\Sp(2d,\R))$. By the invariance under the dynamics, there exists $\Lambda_1 \in C^\omega(\T,\GL(n,\R))$, $\Lambda_2\in C^\omega(\T,\GL(n,\R))$ and $\Gamma \in C^\omega(\T,\Sp(2d-2n,\R))$
such that 
\begin{equation}\label{T(X+ALPA)}
T(x+\alpha)^{-1}A(x)T(x)=\begin{pmatrix}
    \Lambda_1(x) & O \\
    O & \Lambda_2(x)
\end{pmatrix}\diamond \Gamma(x).
\end{equation}
As $T \in C^\omega(\T,\Sp(2d,\R))$, which implies that $T(x+\alpha)^{-1}A(x)T(x)\in \Sp(2d,\R)$. Hence, $ \Lambda_2(x)= \Lambda_1^{-\top}(x)$,  $\Gamma(x)\in \Sp(2d-2n,\R)$. Now we take $\lambda(x)=\sqrt{\operatorname{det}(\Lambda_1(x))}$, $\Lambda(x)=\frac{1}{\lambda(x)}\Lambda_1(x)$,  and completes the proof of Proposition \ref{DIAGNALIZATION LEMMA} $(1)$.

If  $\tau(E^u)=\tau(E^s)=-1$, similar as above,  by Proposition \ref{REAL SYMPLECTIC LIFT}, we can take 
           $ v_{\pm 2},\dots,v_{\pm d} \in C^\omega(\mathbb{T},\mathbb{R}^{2d})$,
    and $v_{\pm 1} \in C^\omega(\T,\mathbb{PR}^{2d})$ with $\tau(v_1)=\tau(v_{-1})=-1$,  such that 
    $$\{v_1,\cdots,v_d,v_{-1},\cdots,v_{-d}\}$$ forms a symplectic basis for $\R^{2d}$ . Hence if we define the matrix $T(x)$ as in \eqref{DEFINITIONOF T},  
Then $T(x) \in \Sp(2d,\R)$ for all $x \in \T$, but $T(x+1)=T(x)\mathcal{P}_d$. By the invariance under the dynamics, there exists $\Lambda_1 \in C^\omega(2\T,\GL(n,\R))$, $\Lambda_2\in C^\omega(2\T,\GL(n,\R))$ and $\Gamma \in C^\omega(2\T,\Sp(2d-2n,\R))$
such that \eqref{T(X+ALPA)} holds.
From the fact $T(x)\in \Sp(2d,\R)$, we have $T(x+\alpha)^{-1}A(x)T(x)\in \Sp(2d,\R)$, hence $ \Lambda_2(x)= \Lambda_1^{-\top}(x)$,  $\Gamma(x)\in \Sp(2d-2n,\R)$. Since $T(x+1)=T(x)\mathcal{P}_d$, a direct computation shows that $\Lambda_1(x+1)=\mathcal{T}_n^{-1}\Lambda_1(x)\mathcal{T}_n$ and $\Gamma(x+1)=\Gamma(x)$.  Take $\lambda(x)=\sqrt{\operatorname{det}(\Lambda_1(x))}$, $\Lambda(x)=\frac{1}{\lambda(x)}\Lambda_1(x)$, one can check that $\lambda(x+1)=\lambda(x)$ and thus $\Lambda(x+1)=\mathcal{T}_n^{-1}\Lambda(x)\mathcal{T}_n$. Hence we completes the proof of Proposition \ref{DIAGNALIZATION LEMMA} $(2)$.\qed

\section{Analytic Block diagonalization in Hermitian-symplectic groups}\label{Canonical frame for the three bundles in hermitian-symplectic space}

\subsection{Analytic Sylvester Inertia Theorem}

As previously noted, the proof of analytic block diagonalization within Hermitian-symplectic cocycles relies on the analytic Sylvester Inertia Theorem. 
However, let's begin with Kato's  analytic diagonalization of Hermitian matrix-valued functions \cite{Kato}:

\begin{lemma}\cite{Kato}\label{Kato lemma}
    Let $I$ being on open interval on $\R$, $G \in C^\omega(I,\Her(m,\C))$. Then for  any $x_0 \in I$,  there exist $\delta>0$,  $U \in C^\omega_{\delta}((x_0-\delta,x_0+\delta),\U(m))$, $\lambda_i \in C^\omega_{\delta}((x_0-\delta,x_0+\delta),\R) $ such that whenever $\left|x-x_0\right|<\delta$,
    \begin{equation}\label{equation in kato lemma}
         U(x)^*G(x)U(x)=diag \{\lambda_1(x),\cdots,\lambda_{m}(x)\}.
    \end{equation}           
\end{lemma}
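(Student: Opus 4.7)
The plan is to reduce to the classical analytic perturbation theory for Hermitian families by combining three ingredients: holomorphic extension, Kato's spectral projection / transformation function, and Rellich's factoring trick for the totally degenerate case. First I would extend $G$ holomorphically to a complex neighborhood $N_{x_0}\subset\C$ of $x_0$, still denoted $G$; note that for real arguments $G(z)$ remains Hermitian, whereas for complex $z$ it is only holomorphic. I would then argue by induction on $m$, the case $m=1$ being trivial.

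For the induction step, suppose first that $G(x_0)$ is \emph{not} a scalar matrix, and let $\mu_1,\dots,\mu_r$ be its distinct eigenvalues with multiplicities $k_j<m$. Enclose each $\mu_j$ by a small Jordan contour $\Gamma_j\subset\C$ separating it from the other eigenvalues. For $z$ sufficiently close to $x_0$, the Riesz spectral projection
$$P_j(z)=\frac{1}{2\pi i}\oint_{\Gamma_j}(\zeta I-G(z))^{-1}\,d\zeta$$
is holomorphic in $z$, of constant rank $k_j$, and for real $z$ is the orthogonal projection onto a $G(z)$-invariant subspace. Kato's transformation function $U_j(z)$, defined as the unique solution of the linear ODE
$$U_j'(z)=[P_j'(z),P_j(z)]U_j(z),\qquad U_j(x_0)=I,$$
is holomorphic near $x_0$ and intertwines $P_j(x_0)$ with $P_j(z)$. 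A direct computation using $P_j(z)^*=P_j(z)$ for real $z$ shows that $[P_j'(z),P_j(z)]$ is skew-Hermitian on the real axis, so $U_j(z)\in\U(m)$ for such $z$. Transporting a fixed orthonormal basis of $\operatorname{Ran} P_j(x_0)$ by $U_j(z)$ and concatenating over $j$ yields an analytic unitary $V(z)$ such that $V(z)^*G(z)V(z)$ is block diagonal with Hermitian analytic blocks of sizes $k_j<m$; applying the induction hypothesis to each block finishes this case.

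The degenerate case $G(x_0)=\mu I$ is the main obstacle, since the contour separation collapses. Here I would follow Rellich's idea of peeling off scalar parts: let $p\geq 1$ be minimal such that $G^{(p)}(x_0)$ is not a scalar multiple of $I$ (if no such $p$ exists then, by analyticity, $G(x)\equiv s(x)I$ for a real-analytic scalar $s$ near $x_0$, and the conclusion is immediate). Then one can write $G(x)=s(x)I+(x-x_0)^p\widehat G(x)$, where $s$ is real-analytic scalar, $\widehat G$ is real-analytic and Hermitian, and $\widehat G(x_0)$ has at least two distinct eigenvalues; a unitary diagonalization of $\widehat G$ from the non-degenerate case gives one of $G$ with the same unitary $U$ and eigenvalues $\lambda_i(x)=s(x)+(x-x_0)^p\widehat\lambda_i(x)$.

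The most delicate technical point is verifying that Kato's transformation function is genuinely unitary on the real axis; this rests on the skew-Hermiticity of $[P_j'(x),P_j(x)]$ for real $x$, which in turn is precisely where the Hermiticity of $G$ enters in an essential way. If $G$ were merely normal, one would need Bluster's observation to reduce to this Hermitian case; for our statement the Hermitian structure lets us bypass that extra step and obtain both $U\in \U(m)$ and real-valued $\lambda_i$ simultaneously.
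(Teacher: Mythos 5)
The paper does not prove this lemma; it is cited directly from Kato's book, so there is no in-paper argument to compare against. Your reconstruction is correct and is essentially the standard Kato--Rellich proof found in the cited source: holomorphic extension, induction on $m$, Riesz projections plus Kato's transformation function (with unitarity on the real axis following from skew-Hermiticity of $[P_j',P_j]$, which uses $P_j^*=P_j$ and hence the Hermiticity of $G$), and Rellich's scalar peel-off $G(x)=s(x)I+(x-x_0)^p\widehat G(x)$ to break total degeneracy. One small point of exposition: in the degenerate case you still only get a \emph{block} diagonalization of $\widehat G$ from the Riesz-projection step, so the induction hypothesis must still be invoked on the blocks before passing the resulting diagonal back to $G$ via $\lambda_i(x)=s(x)+(x-x_0)^p\widehat\lambda_i(x)$; your phrasing slightly compresses this, but the logic is sound and the termination of the induction (each block has size $k_j<m$) is correct.
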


What we want to prove is the following: if $ G \in C^\omega(\T,\Her(m,\C))$ is periodic, $G(\cdot)$ can be analytically diagonalized by a  $U(\cdot)\in C^\omega(\R,\U(m))$, which is periodic up to a permutation matrix $\Gamma$.
The cyclic structure of $\Gamma$ reflects the monodromy of eigenvectors around the torus:

\begin{proposition} \label{permutation diagonalization}
    Let $ G \in C^\omega(\T,\Her(m,\C))$. Then there exists $U \in C^\omega(\R, \U(m))$ such that $U(x)^*G(x)U(x)$ is diagonal matrix for all $x \in \R$. 
Moreover, there exists $r_1,\cdots, r_s\in \N_+$ such that 
\begin{equation}\label{EIGEN II}
 \quad U(x+1)=U(x)\Gamma, \quad \text{where} \quad \Gamma=\operatorname{diag}(\Gamma_{r_1} \dots, \Gamma_{r_s}), \quad \Gamma_r:=\begin{pmatrix}
         0 & 1 \\
         I_{r-1} & 0
     \end{pmatrix}.
\end{equation}
\end{proposition}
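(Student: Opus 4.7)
The strategy is to apply Kato's lemma (Lemma \ref{Kato lemma}) together with analytic continuation on the universal cover $\R$ to construct a global analytic diagonalization, and then extract the permutation structure from the $1$-periodicity of $G$ and the triviality of the resulting eigen-line bundles. First I would patch local Kato diagonalizations via analytic continuation along $\R$ to obtain an analytic unitary $U_0\in C^\omega(\R,\U(m))$ and real-analytic eigenvalue functions $\lambda_1,\ldots,\lambda_m\in C^\omega(\R,\R)$ with $U_0(x)^*G(x)U_0(x)=\operatorname{diag}(\lambda_1(x),\ldots,\lambda_m(x))$. The $1$-periodicity of $G$ forces $\{\lambda_i(x+1)\}=\{\lambda_i(x)\}$ as multisets at every $x$, so the identity theorem supplies a permutation $\sigma\in S_m$ with $\lambda_i(x+1)=\lambda_{\sigma(i)}(x)$; its cycle decomposition yields the integers $r_1,\ldots,r_s$. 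After a fixed column permutation of $U_0$, one may assume the cycles of $\sigma$ occupy consecutive blocks of indices.

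Next, for each cycle $C_\ell$ of length $r_\ell$ with leading index $i_\ell$, iterating $\lambda_i(x+k)=\lambda_{\sigma^k(i)}(x)$ shows $\lambda_{i_\ell}$ is $r_\ell$-periodic, and the analytic line $L_\ell(x):=\operatorname{span}_{\C}(U_0(x)e_{i_\ell})$ satisfies $L_\ell(x+r_\ell)=L_\ell(x)$ (equality on the open dense set where $\lambda_{i_\ell}$ is a simple eigenvalue, extended everywhere by analyticity of $L_\ell$ as a map into $\mathbb{CP}^{m-1}$). Hence $L_\ell$ descends to a real-analytic complex line sub-bundle of $r_\ell\T\times\C^m$. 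Complex line bundles over the circle are trivializable (topologically from $H^2(S^1,\Z)=0$; the real-analytic version follows by complexifying $r_\ell\T$ to a Stein annular neighborhood in $\C/r_\ell\Z$ and invoking Oka's principle), so $L_\ell$ admits an $r_\ell$-periodic analytic unit section $u_{i_\ell}\in C^\omega(\R,\C^m)$. Equivalently, any Kato unit section $u_{i_\ell}^{(0)}$ with monodromy $u_{i_\ell}^{(0)}(x+r_\ell)=c(x)u_{i_\ell}^{(0)}(x)$, $c\in C^\omega(\R,\U(1))$, can be gauged to a periodic one by $u_{i_\ell}:=e^{i\phi}u_{i_\ell}^{(0)}$ for an appropriate analytic $\phi$.

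With the leading eigenvectors chosen, set $u_{i_\ell+k}(x):=u_{i_\ell}(x+k)$ for $0\le k<r_\ell$; the identity $\lambda_{i_\ell+k}(x)=\lambda_{i_\ell}(x+k)$ and the $1$-periodicity of $G$ make each $u_{i_\ell+k}$ an analytic unit eigenvector of $\lambda_{i_\ell+k}$. Assemble $U(x):=(u_1(x),\ldots,u_m(x))$. Orthonormality of columns holds wherever the eigenvalues are distinct, and propagates everywhere by analyticity of the pairwise inner products, so $U\in C^\omega(\R,\U(m))$ and $U(x)^*G(x)U(x)$ is diagonal by construction. The desired identity $U(x+1)=U(x)\Gamma$ with $\Gamma=\operatorname{diag}(\Gamma_{r_1},\ldots,\Gamma_{r_s})$ is a block-by-block check: for $i=i_\ell+k$ with $k<r_\ell-1$, $u_i(x+1)=u_{i_\ell}(x+k+1)=u_{i+1}(x)$; while for $k=r_\ell-1$, the $r_\ell$-periodicity of $u_{i_\ell}$ gives $u_i(x+1)=u_{i_\ell}(x+r_\ell)=u_{i_\ell}(x)$, closing the cycle. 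The main obstacle is the bundle-triviality step: immediate at the topological level, but its transfer to the real-analytic category requires either the Oka-principle complexification above, or equivalently a direct resolution of the associated phase equation for $\phi$; everything else reduces to Kato's theorem and linear algebra.
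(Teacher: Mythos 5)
Your proposal follows the same three-step skeleton as the paper (global Kato diagonalization, a permutation $\sigma$ from $1$-periodicity of $G$, trivialization of periodic eigenbundles), but it has a genuine gap in the middle step: you work exclusively with \emph{line} bundles $L_\ell(x)=\operatorname{span}_{\C}(U_0(x)e_{i_\ell})$, and this silently assumes that every analytic eigenvalue function $\lambda_i$ is generically simple, i.e.\ that no two $\lambda_i,\lambda_j$ coincide identically. This assumption need not hold (e.g.\ $G=I_m$, or $G$ with a repeated periodic eigenvalue branch), and it is precisely the delicate case. When $\lambda_{i_\ell}\equiv\lambda_j$ for some $j\neq i_\ell$, two things break at once: (i) the set where $\lambda_{i_\ell}$ is a simple eigenvalue is \emph{empty}, so your argument that $L_\ell(x+r_\ell)=L_\ell(x)$ on an open dense set and then by analyticity is vacuous; and (ii) the claim itself may be false, because the monodromy $U_0(x)^{-1}U_0(x+1)$ only commutes with $\operatorname{diag}(\lambda_1,\dots,\lambda_m)$ up to the permutation $\sigma$, hence has the form $W(x)P_\sigma$ with $W(x)$ block-unitary on the eigenspace blocks; such a $W$ can rotate the $i_\ell$-th column into a genuine combination of the columns within its degenerate block, so $L_\ell$ is not invariant. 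Consequently one cannot pick a single $r_\ell$-periodic unit section per cycle; the basic object to trivialize is the eigenspace bundle $E_{m_j}(x)=\ker(G(x)-\lambda_{m_j}(x)I)$ of dimension equal to the multiplicity, not a line.

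This is exactly where the paper's proof diverges from yours. After obtaining the analytic eigenvalues and the permutation $\tilde\sigma$ on the set of \emph{distinct} eigenvalue branches $\{m_1,\dots,m_r\}$, the paper shows (Claim \ref{Eigenspaces}) that each $E_{m_j}$ is a well-defined analytic map into a Grassmannian $\mathrm{Gr}(m_{j+1}-m_j,\C^m)$, independent of the choice of $U_0$, with $E_{m_j}(x+1)=E_{\tilde\sigma(m_j)}(x)$, and then invokes the Grassmannian lifting result Theorem \ref{COMPLEX LIFT} (not just line-bundle triviality) to produce a $T_k$-periodic orthonormal \emph{frame} of each cycle's leading eigenspace, propagating it to the other eigenspaces in the cycle by shifts. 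Your Oka-principle/phase-equation trivialization is a perfectly valid replacement for Theorem \ref{COMPLEX LIFT} in rank one, but to close the gap you would need its rank-$k$ analogue (trivialize the pullback eigenspace bundle over $T_k\T$, then Gram--Schmidt), extend $\tilde\sigma$ to $\sigma\in S_m$ compatibly with the block structure, and only then assemble $U$. With that modification the proposal becomes essentially the paper's argument.

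One smaller point worth noting: you assert without argument that the local Kato diagonalizations ``patch via analytic continuation'' to a global $U_0\in C^\omega(\R,\U(m))$; this is true but not immediate, since Kato's local $U$ is far from unique inside degenerate blocks, and the paper's Lemma \ref{Eigenvalues} does the bookkeeping for the eigenvalues carefully (the eigenvector patching issue is exactly what forces the eigenspace-bundle viewpoint).
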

\begin{proof}

We will distinguish the whole proof into several steps:

\smallskip

\textbf{Step 1: Permutation of analytic eigenvalues.}

For analytic $G(\cdot)$, eigenvalues and eigenvectors can be chosen analytically locally by Lemma \ref{Kato lemma}.
At points where eigenvalues cross (i.e. with multiplicities), eigenvalues may not be analytically defined. The analytic continuation around $\T$ leads to a permutation of eigenvalues.  We first prove that the eigenvalues of 
$G(\cdot)$ can be chosen as analytic functions on 
$\R.$

For convenience we denote the multiset composed by all the eigenvalues (with multiplicities taken into account) of $G(x)$ by $\Sigma(G(x))$. Then $G(x+1)=G(x)$ implies that for all $x \in \R$,
\begin{equation}\label{Sigma is periodic}
    \Sigma(G(x+1))=\Sigma(G(x)).
\end{equation}

\begin{lemma}\label{Eigenvalues}
    There exists $\delta>0$ and  $\lambda_1, \cdots,\lambda_m \in C_{\delta}^\omega(\R,\R)$, such that for all $x \in \R$,
    \begin{equation}\label{Sigma=Lambdas}
        \Sigma(G(x))=\{\lambda_1(x),\cdots,\lambda_m(x)\}.
    \end{equation}
\end{lemma}

\begin{proof}
Based on Lemma \ref{Kato lemma}, we can utilize compactness to select a positive $\delta > 0$ and a finite collection of open intervals $I_1, \ldots, I_{\bar{q}}$ such that $[0, 1] \subset \bigcup_{r=1}^{\bar{q}} I_r$. For each $r = 1, \ldots, \bar{q}$, the set $\Sigma(G)$ can be represented as follows, using functions $\lambda_{r,1}, \ldots, \lambda_{r,m} \in C_{\delta}^\omega(I_r, \mathbb{R})$:
\begin{equation}\label{Sigma=Lambdas0}
\Sigma(G(x)) = \{\lambda_{r,1}(x), \ldots, \lambda_{r,m}(x)\}, \quad \text{if} \quad x \in I_r.
\end{equation}
For any $k \in \mathbb{Z}$, we express $k = l \bar{q} + r$, where $l \in \mathbb{Z}$ and $1 \leq r \leq \bar{q}$. We define $I_k = I_{l \bar{q} + r} := l + I_r$, thereby establishing $\mathbb{R} = \bigcup_{k \in \mathbb{Z}} I_k$. Since $\Sigma(G)$ can be expressed as \eqref{Sigma=Lambdas0} for some functions $\lambda_{r,1}, \ldots, \lambda_{r,m} \in C_{\delta}^\omega(I_r, \mathbb{R})$ within $I_r$, we define $\lambda_{k,j} = \lambda_{l \bar{q} + r,j} := \lambda_{r,j}(\cdot + l)$. By using \eqref{Sigma is periodic}, we find that $\Sigma(G)$ can similarly be expressed as \eqref{Sigma=Lambdas0} for functions $\lambda_{k,1}, \ldots, \lambda_{k,m} \in C_{\delta}^\omega(I_k, \mathbb{R})$ within $I_k$.


    Next, we define  $\lambda_j \in C_{\delta}^\omega(\R,\R)$, $j=1,\cdots,m$  by an inductive anyalytic continuation argument such that  
    \eqref{Sigma=Lambdas} holds. 
Define $\lambda_j(x) = \lambda_{1,j}(x)$. Therefore, $\lambda_j \in C_{\delta}^\omega(I_1, \mathbb{R})$, and equation \eqref{Sigma=Lambdas} holds for all $x \in I_1$. Assume $\lambda_1, \ldots, \lambda_m$ is defined on $I_k$ for some $k \in \mathbb{N}_{+}$ and satisfies \eqref{Sigma=Lambdas}. We proceed to define the values they take on $I_{k+1}$.
By equation \eqref{Sigma=Lambdas0} and the inductive hypothesis, for all $x \in I_k \cap I_{k+1}$, it follows that $\{\lambda_1(x), \ldots, \lambda_m(x)\} = \{\lambda_{k,1}(x), \ldots, \lambda_{k,m}(x)\}$. Consequently, there exists a permutation $\sigma_x \in S_m$ such that $\lambda_j(x) = \lambda_{k, \sigma_x(j)}(x)$. Since $S_m$ is a finite set, there exists a fixed permutation $\sigma' \in S_m$ such that the set $\{x \in I_k \cap I_{k+1} : \sigma_x = \sigma'\}$ is not discrete. In other words, the zero set of $\lambda_j - \lambda_{k, \sigma'(j)}$ has an accumulation point. By the uniqueness property of analytic functions, we conclude that $\lambda_j = \lambda_{k, \sigma'(j)}$ holds identically in $I_k \cap I_{k+1}$. Thus, we define $\lambda_j(x) = \lambda_{k, \sigma'(j)}(x)$ for all $x \in I_{k+1}$, ensuring that $\lambda_j \in C_{\delta}^\omega(I_{k+1}, \mathbb{R})$ and that equation \eqref{Sigma=Lambdas} holds.

Continuing this inductive process, we can define the values of $\lambda_j$ on $\bigcup_{k \geq 1} I_k$. By a similar argument, $\lambda_j$ can also be defined on $\bigcup_{k \leq 0} I_k$. Since $\lambda_j \in C_{\delta}^\omega(I_k, \mathbb{R})$ for any $k \in \mathbb{Z}$, it follows that $\lambda_j \in C_{\delta}^\omega(\mathbb{R}, \mathbb{R})$. Thus, by the inductive step and equation \eqref{Sigma=Lambdas0}, we confirm that \eqref{Sigma=Lambdas} holds. Hence, we complete the proof of Lemma \ref{Eigenvalues}. 
\end{proof}

 By the uniqueness property of analytic functions, for $1 \leq j, k \leq m$, either $\lambda_j=\lambda_k$ holds identically, or $\lambda_j(x) \neq \lambda_k(x)$ holds for all $x \in \R$ except for only a discrete subset of $\R$.
    Up to a permutation among $\lambda_1,\cdots,\lambda_m$, we can take some $1=m_1<m_2\cdots<m_r<m_{r+1}=m+1$, such that
    \begin{equation}\label{Lambdas are equal}
        \lambda_{m_1}= \cdots = \lambda_{m_2-1}, \quad \lambda_{m_2}= \cdots = \lambda_{m_3-1}, \quad \cdots ,\quad \lambda_{m_{r}}= \cdots = \lambda_{m_{r+1}-1},
    \end{equation} holds identically, and for $1 \leq j,k \leq r $, $\lambda_{m_j}$ and $\lambda_{m_k}$ take different values out of a discrete subset of $\R$ whence $j \neq k$. Hence we can find a discrete subset of $\R$ (denoted by $S$) such that $\lambda_{m_1},\cdots,\lambda_{m_r}$ take $r$ distinct values out of $S$.

    Since $G(x+1)=G(x)$ implies  $\{\lambda_{m_1}(x+1),\cdots,\lambda_{m_r}(x+1)\}=\{\lambda_{m_1}(x),\cdots,\lambda_{m_r}(x)\},$ there exists a permutation $\tilde{\sigma} $ from $\{m_1,m_2,\cdots,m_r\}$ to $\{m_1,m_2,$ $\cdots,m_r\}$, such that $\lambda_{m_j}(\cdot+1)=\lambda_{\tilde{\sigma}(m_j)}$: Firstly we take some $x_0 \in \R\backslash S$, by \eqref{Sigma is periodic} and \eqref{Sigma=Lambdas}, we can find $\tilde{\sigma}$ satisfying $\lambda_{m_j}(x_0+1)=\lambda_{\tilde{\sigma}(m_j)}(x_0)$. Since $\{\lambda_{m_j}(x_0)\}$ are distinct we know $\lambda_{m_j}(x+1)=\lambda_{\tilde{\sigma}(m_j)}(x)$ holds for all $x \in (x_0-\delta,x_0+\delta)$ whence $\delta$ is small enough, hence holds for all $x \in \R$ by the uniqueness property of the analytic function.\\

\smallskip
\textbf{Step 2: Permutation of analytic eigenvectors.}

While the eigenvalues of 
$G(\cdot)$ can be defined globally and in an analytic way on
$\R$, the eigenvectors cannot generally be globally defined via analytic continuation due to the non-uniqueness of the unitary conjugacy $U(\cdot)$, 
 where $ U \in C_{\delta}^\omega((x-\delta,x+\delta), \mathrm{U}(m)) $ is obtained as in Lemma \ref{Kato lemma}. However, if we define
$$
E_{m_j}(x) = \operatorname{span}_{\mathbb{C}}(\operatorname{col}_{m_j} U(x), \ldots, \operatorname{col}_{m_{j+1}-1} U(x)),
$$
where $\operatorname{col}_{k} U(x)$ is $k-$th column of $U(x)$.  We can establish the following claim:

\begin{Claim} \label{Eigenspaces} For any $ j = 1, 2, \ldots, r $, we have: \begin{enumerate}
\item \( E_{m_j} \in C^\omega(\mathbb{R}, \mathrm{Gr}(m_{j+1} - m_j, \mathbb{C}^d)). \)
\item $
E_{m_j}(x + 1) = E_{\tilde{\sigma}(m_j)}(x).$
\end{enumerate}
\end{Claim}

\begin{proof} We begin by proving that $ E_{m_j} $ is well-defined and independent of the choice of $ U(\cdot) $. Indeed, for any $ \tilde{G}(\cdot) \in M(m, \mathbb{C}) $, we define
\[
\operatorname{Ker}(\tilde{G}(\cdot)) := \{ \xi \in \mathbb{C}^m : \tilde{G}(x)\xi = 0 \}.
\]
Therefore, for any \( x \in \mathbb{R} \setminus S \), \( E_{m_j}(x) = \operatorname{Ker}(\lambda_{m_j}(x) - G(x)) \) is independent of the choice of \( U \). It is important to note that if \( \operatorname{rank}(\tilde{G}(\cdot)) \) remains constant in a neighborhood of \( x_0 \), its kernel forms an analytic vector bundle. However, if \( \operatorname{rank}(\tilde{G}(\cdot)) \) drops at some \( x_0 \), the kernel dimension increases, leading to a loss of analyticity. Nonetheless, by Lemma \ref{Kato lemma}, \( U(x) \) depends analytically on \( x \in \mathbb{R} \), which implies that \( E_{m_j}(x) \) also depends analytically on \( x \). As a result, for \( x \in S \), \( E_{m_j}(x) \) corresponds to the limit of \( \operatorname{Ker}(\lambda_{m_j}(x_k) - G(x_k)) \), where the sequence \( \{ x_k \} \) in \( \mathbb{R} \setminus S \) converges to \( x \). This ensures that \( E_{m_j}(x) \) is independent of the choice of \( U \) as well.

If \( j \neq k \), then \( \lambda_{m_j}(x) \neq \lambda_{m_k}(x) \) implies that \( E_{m_j}(x) \) is orthogonal to \( E_{m_k}(x) \). By continuity, they remain mutually orthogonal for all \( x \in \mathbb{R} \). For any \( x \in \mathbb{R} \setminus S \), we find
\[
E_{m_j}(x + 1) = \operatorname{Ker}(\lambda_{m_j}(x + 1) - G(x + 1)) = \operatorname{Ker}(\lambda_{\tilde{\sigma}(m_j)}(x) - G(x)) = E_{\tilde{\sigma}(m_j)}(x),
\]
and by continuity, this holds for all \( x \in \mathbb{R} \).
\end{proof}

In the following, we will extend the permutation $\tilde{\sigma} $ defined on $\{m_1,m_2,\cdots,m_r\}$ to
a permutation $\sigma \in S_m$, and obtained the following:

\begin{lemma}\label{Eigenvectors}
There exist a permutation $\sigma \in S_m$ and $\xi_1,\cdots,\xi_m \in C^\omega(\R,\C^{m})$ forming an orthonormal frame, such that for  $k=1,2,\dots, m, x \in \R,$
    \begin{equation}\label{Eigenvector permutation}        G(x)\xi_k(x)=\lambda_k(x)\xi_k(x), \quad \lambda_k(x+1)=\lambda_{\sigma(k)}(x), \quad \xi_k(x+1)=\xi_{\sigma(k)}(x). 
    \end{equation}
\end{lemma}
\begin{proof}
Let $ T_k $ denote the smallest positive integer such that $ \tilde{\sigma}^{T_k}(m_k) = m_k $. We define the frame $ \{ \xi_{m_k}, \ldots, \xi_{m_{k+1}-1} \} $ for $ E_{m_k} \in C^\omega\left(T_k \mathbb{T}, \text{Gr}(m_{k+1}-m_k, \mathbb{C}^m)\right) $. To achieve this, we only  need to fix $ m_k $ such that
\begin{equation}\label{property of m_k}
    \min\{\tilde{\sigma}^j(m_k) : j \in \mathbb{Z}\} = m_k.
\end{equation}
We will first define the frame for $ E_{m_k} $, then subsequently define the frame for $ \{E_{\tilde{\sigma}^j(m_k)} : 1 \leq j \leq T_k-1\} $.

\textbf{Analytic frame for $ E_{m_k} $}:
According to Claim \ref{Eigenspaces}, it follows that $ E_{m_k} \in C^\omega(T_k \mathbb{T}, \text{Gr}(m_{k+1}-m_k, \mathbb{C}^m)) $. Therefore, by Theorem \ref{COMPLEX LIFT}, there exists a frame with period $ T_k $ for the bundle $ E_{m_k} $. After applying a Schmidt orthogonalization process, we obtain an orthonormal frame $ \{ \xi_{m_k}, \ldots, \xi_{m_{k+1}-1} \} $ for $ E_{m_k} $, where $ \xi_{m_k}, \ldots, \xi_{m_{k+1}-1} \in C^\omega\left(T_k \mathbb{T}, \mathbb{C}^m\right) $.

\textbf{Analytic frame for $ \{E_{\tilde{\sigma}^j(m_k)} : 1\leq j \leq T_k-1\} $}:
The key observation here is that if $ m_{k^\prime} = \tilde{\sigma}^j(m_k) $, then Claim \ref{Eigenspaces} implies that $ m_{k+1} - m_k = m_{k^\prime+1} - m_{k^\prime} $. Consequently, for $ 1 \leq q \leq m_{k+1} - m_k $, we define
 \begin{equation}\label{define of xi}
      \xi_{\tilde{\sigma}^j(m_k) + q}(x) = \xi_{m_k + q}(x + j).
 \end{equation}

Having established this, we  defined the entire analytic frame $ \{ \xi_1, \ldots, \xi_m \} $. Due to the orthonormality of $ \{ \xi_{m_k}, \ldots, \xi_{m_{k+1}-1} \} $, it follows that $ \{\xi_{\tilde{\sigma}^j(m_k) + q}: 1 \leq q \leq m_{k+1}-m_k\} $ forms an orthonormal frame for the bundle $ E_{\tilde{\sigma}^j(m_k)} $.
Since the spaces $ \{ E_{m_k} : 1 \leq k \leq r \} $ are pairwise orthogonal, we conclude that $ \{ \xi_1, \ldots, \xi_m \} $ forms an orthonormal frame.

Next, we extend the permutation $ \tilde{\sigma} $ to a permutation $ \sigma \in S_m $. For each $ 1 \leq k \leq r $ and $ 0 \leq q \leq m_{k+1}-m_k - 1 $, we define $ \sigma $ as follows:
\begin{equation}\label{DEfine sigma}
    \sigma(m_k + q) = \tilde{\sigma}(m_k) + q.
\end{equation}
For any $ l = 1, 2, \ldots, m $, we can represent $ l $ as $ l = m_k + q $ for some $ 1 \leq k \leq r $ and $ 0 \leq q \leq m_{k+1} - m_k - 1 $. Hence, we have
$$
\lambda_l(x + 1) = \lambda_{m_k + q}(x + 1) = \lambda_{m_k}(x + 1) = \lambda_{\tilde{\sigma}(m_k)}(x) = \lambda_{\tilde{\sigma}(m_k) + q}(x) = \lambda_{\sigma(l)}(x).
$$
Furthermore, we can assert the following:

\begin{Claim} For all $ 1 \leq l \leq m $, we have $ \xi_{l}(x+1) = \xi_{\sigma(l)}(x) $. \end{Claim}

\begin{proof} Let $ m_k $ be fixed such that \eqref{property of m_k} holds. It suffices to consider $ l $ as $ l = \sigma^j(m_k + q) $ with $ 0 \leq j \leq T_k-1 $ and $ 0 \leq q \leq m_{k+1} - m_k - 1 $. Since $ \sigma $ extends $ \tilde{\sigma} $, by the construction in \eqref{define of xi}, we have
$$
\xi_{{\sigma}^j(m_k + q)}(x) = \xi_{m_k + q}(x + j).
$$
\textbf{Case 1: $ 0 \leq j \leq T_k-2 $}: In this case, we find
$$
\xi_{{\sigma}^j(m_k + q)}(x+1) = \xi_{m_k + q}(x + 1 + j) = \xi_{{\sigma}^{j+1}(m_k + q)}(x),
$$ which implies
$
\xi_{l}(x+1) = \xi_{\sigma(l)}(x).
$\\
\textbf{Case 2: $ j = T_k-1 $}: Given that $ E_{m_k} \in C^\omega(T_k \mathbb{T}, \text{Gr}(m_{j+1} - m_j, \mathbb{C}^m)) $, it follows that $ \xi_{m_k + q}(x + T_k) = \xi_{m_k + q}(x) $. Since $ \sigma^{T_k}(m_k + q) = \tilde{\sigma}^{T_k}(m_k) + q = m_k + q $, we also have
$$
\xi_{{\sigma}^{T_k - 1}(m_k + q)}(x + 1) = \xi_{m_k + q}(x + T_k) =\xi_{m_k + q}(x )= \xi_{{\sigma}^{T_k}(m_k + q)}(x).
$$
Thus, the result follows. \end{proof}

Finally, since $ G(x)\xi = \lambda_{m_j}(x)\xi $ for any $ \xi \in E_{m_j}(x) $, we can conclude that
$
G(x)\xi_k(x) = \lambda_k(x)\xi_k(x)
$
for all $ k = 1, 2, \ldots, m $. We have therefore completed the proof.
\end{proof}

\smallskip
\textbf{Step 3: Constructing the unitary conjugacy $U(\cdot)$}

After traversing $\T$,  the permutation of eigenvectors is encoded by \eqref{Eigenvector permutation}. Then we decompose the permutation into cycles, and construct the unitary $U(\cdot)$ ensures consistency under the permutation. 

  We decompose $\sigma$ to be the composition of several disjoint cycles as $$\sigma=\left(a_1^1 \cdots a_{r_1}^1\right) \cdots \left(a_1^s \cdots a_{r_s}^s\right),$$ i.e. $\sigma(a_l^j)=a^j_{l+1}$ and $\sigma(a_{r_j}^j)=a^j_{1}$.  Denote $\lambda_l^j(x)=\lambda_{a^j_l}(x)$, $\xi_l^j(x)=\xi_{a^j_l}(x)$. Then by \eqref{Eigenvector permutation} and the fact that $\left\{\xi_1, \cdots, \xi_m\right\}$ is an orthonormal frame, we have the relationship  
    \begin{equation}\label{WHY U IS UNITARY} G_1(x)\xi^j_k(x)=\lambda^j_k(x)\xi^j_k(x), \quad \xi^j_k(x)^*\xi^{j^\prime}_{k^\prime}(x)=\delta_{jj^\prime} 
    \delta_{kk^\prime}=\left\{\begin{array}{cc}
         1, & j=j^\prime,k=k^\prime , \\
         0, & else.
     \end{array}\right.
        \end{equation}
        \begin{equation}\label{cyclic permutation}
         \lambda^j_k(x+1)=\lambda^j_{k+1}(x), \quad \xi^j_k(x+1)=\xi^j_{k+1}(x).
     \end{equation} 
holds for all $j \in\{1,2, \cdots, s\},$ $ k \in\left\{1,2, \cdots, r_j\right\}$ and $x \in \mathbb{T}$ (we identifies $\lambda_{r_j+1}^j$ and $\xi_{r_{j+1}}^j$ with $\lambda_1^j$ and $\xi_1^j$).
     For $j=1,2,\cdots,s$, we define the matrices $ U_j \in C^\omega(\R,\C^{n\times r_j})$ and $U \in C^\omega(\R,\C^{m \times m})$ as the following:
     $$U_j(x)=\begin{pmatrix}
         \xi^j_1(x),\cdots,\xi^j_{r_j}(x)
     \end{pmatrix}, \quad
U(x) = \begin{pmatrix}
U_1(x) & \cdots & U_s(x) 
\end{pmatrix}$$

By \eqref{WHY U IS UNITARY} we know  $U \in C^\omega(\T,\U(m))$. Since each column vector of $U(x)$ is an eigenvector of $G(x)$, we know $U(x)^*G(x)U(x)$ is a diagonal matrix, and \eqref{cyclic permutation} implies that $U_j(x+1)=U_j(x)\Gamma_{r_j}$, thus \eqref{EIGEN II} holds, completing the proof.
\end{proof}

\textbf{Proof for Proposition \ref{diagonalization}:}   
Take $U$ and $\Gamma$ as described in Proposition  \ref{permutation diagonalization}. 
Since $\Gamma_j^{r_j}=I_{r_j}$, we set $K=\prod_{j=1}^{s}r_j$ have $\Gamma^K=I_m,$ thus $U(x+K)=U(x)\Gamma^K=U(x)$ completes the proof of  Proposition \ref{diagonalization}.\qed
\bigskip

\textbf{Proof for Theorem \ref{diagonalization-1}:}  
Take $\lambda^j_k$ as stated in the proof of Proposition   \ref{permutation diagonalization}. Since $G \in C^\omega(\T, \GL(m,\C))$, for any $j=1,\cdots,s$ and $k \in \{1,\cdots,r_j\}$, by \eqref{WHY U IS UNITARY} we know $\lambda^j_{k+l}(\cdot)=\lambda_k^j(\cdot+l)$, 
$\lambda^j_k$ is either strictly positive or strictly negative, thus the notation $\operatorname{sign}(\lambda^j_k):=\operatorname{sign}\lambda^j_k(x)$ is well-defined, and it is independent of $k$ and depends solely on $j$, denote it by $\epsilon_j$. Then we define
$\Lambda_j \in C^\omega(\R,\R^{r_j \times r_j}), \Lambda \in C^\omega(\R,\R^{m \times m})$ by 
$$\Lambda_j(x)=\operatorname{diag}(\sqrt{\epsilon_j\lambda^j_1(x)},\cdots,\sqrt{\epsilon_j\lambda^j_{r_j}(x)}),\quad \Lambda(x) = \operatorname{diag}(\Lambda_1(x), \dots, \Lambda_s(x)).$$
Next we take $D=\operatorname{diag}(\epsilon_1I_{r_1},\cdots,\epsilon_sI_{r_s}).$ From \eqref{EIGEN II} we have $U \in C^\omega(\R,\U(m))$ and by definition,
\begin{equation}\label{UG1U=NORMFORM}
U(x)^*G(x)U(x)=\Lambda(x)^*D\Lambda(x).
\end{equation}

     For $r \in \N_+$, we define the matrices $Q_r \in C^\omega(\T,\U(r))$ as follows:
     \begin{equation}\label{def of unitary Q}
         Q_r(x)=\frac{1}{\sqrt{r}}\begin{pmatrix}
         e^{\frac{2\pi}{r}x} & e^{\frac{2\pi}{r}(x+1)} & \cdots &e^{\frac{2\pi}{r}(x+r-1)} \\
         e^{\frac{2\pi}{r}\cdot2x} & e^{\frac{2\pi}{r}\cdot2(x+1)} & \cdots &e^{\frac{2\pi}{r}\cdot2(x+r-1)} \\
         \vdots & \vdots &  & \vdots \\
         e^{\frac{2\pi}{r}\cdot rx} & e^{\frac{2\pi}{r}\cdot r (x+1)} & \cdots &e^{\frac{2\pi}{r}\cdot r(x+r-1)}
         \end{pmatrix}.
     \end{equation}
     Then we define
     $ Q(x)=\operatorname{diag}(Q_{r_1}(x) , \cdots , Q_{r_s}(x))$.
     Since $Q_r \in C^\omega(\R,\U(r))  $ we know $Q \in C^\omega(\R,\U(m))$. Moreover, one can verify that
     \begin{equation}\label{UandLambdaQ-PERMU}
Q_{r_j}(x+1)=Q_{r_j}(x)\Gamma_{r_j}, \quad Q(x+1)=Q(x)\Gamma.
     \end{equation}
Finally we take $M \in GL(m,\C)$ such that $M^*DM=\operatorname{diag}(I_p,-I_q)$ for some $p,q \in \N_+$ and define  $N(x)=U(x)\Lambda(x)^{-1}Q(x)^{*}M.$ By the proof of Lemma \ref{permutation diagonalization} we know 
$\Lambda(x+1)=\Gamma\Lambda(x)\Gamma,$ together with \eqref{EIGEN II}, \eqref{UG1U=NORMFORM} and \eqref{UandLambdaQ-PERMU} we know $N \in C^\omega(\T,\GL(m,\C))$.
Since $Q_j(x) \in U(r_j)$ impiles that $Q_{r_j}(x)I_{r_j}Q_{r_j}(x)^*=I_{r_j}$, we have $Q(x)DQ(x)^*=D$. By \eqref{UG1U=NORMFORM} we obtain the following 
$$N(x)^*G(x)N(x)=M^*Q(x)DQ(x)^*M=M^*DM=\operatorname{diag}(I_p,-I_q),$$
which completes the whole proof.
\qed

\subsection{Analytic Block diagonalization in Hermitian-symplectic groups}\label{abd}

Let's first consider the complexity of the problem from a view of Hermitian symplectic geometry. 
A remarkable fact is that there exist Hermitian-symplectic subspaces for which are impossible to identify a canonical basis, see \cite{Harmer}: 
let $V=\operatorname{span}_{\C}(v_1,v_2,\cdots,v_{2n})$ be a basis of 
the hermitian-subspace $V \subset \C^{2d}$, following M. Krein, 
for any $u,v \in V$, we define the  \textbf{Krein form}   by 
$i u^*Jv$ \cite{Long}, consequently we call 
\begin{equation}\label{GRAM MATRIX}
G(v_1,v_2,\cdots,v_{2n})=i 
    \begin{bmatrix}
        v_1^*Jv_1 & v_1^*Jv_2 & \cdots & v_1^*Jv_{2n} \\
        v_2^*Jv_1 & v_2^*Jv_2 & \cdots & v_2^*Jv_{2n} \\
        \vdots & \vdots & \ddots & \vdots \\
        v_{2n}^*Jv_1 & v_{2n}^*Jv_2 & \cdots & v_{2n}^*Jv_{2n} \\
    \end{bmatrix}
\end{equation}
the \textbf{Krein matrix} of this basis. 
It is straightforward to verify that $ G(v_1, v_2, \ldots, v_{2n}) $ constitutes a Hermitian matrix. If we choose an alternative basis $ \{u_1, u_2, \ldots, u_{2n}\} $ for the vector space $ V $, there exists a matrix $ N \in \GL(2n, \C) $ such that
$$
\begin{pmatrix}
v_1 & \cdots & v_{2n}
\end{pmatrix} = \begin{pmatrix}
u_1 & \cdots & u_{2n}
\end{pmatrix} N.
$$
A direct computation reveals that
$$
G(v_1, v_2, \ldots, v_{2n}) = N^*G(u_1, u_2, \ldots, u_{2n})N.
$$
This implies that the Krein matrices corresponding to the same Hermitian-symplectic subspace are congruent to one another.

Moreover, according to the Analytic Sylvester Inertia Theorem (Theorem \ref{diagonalization-1}), for any $ G \in \Her(2n, \C) \cap \GL(2n, \C) $, its congruence normal form is given by $ \operatorname{diag}(I_p, -I_q) $, where $ p $ represents the positive inertia index and $ q $ the negative inertia index of $ G $, with the condition that $ p + q = 2n $. It is clear that $ p $ and $ q $ are uniquely determined by $ p - q $, referred to as the signature difference of $ G $ and denoted by $ \operatorname{sign}(G) $. Thus, Krein matrices of the same Hermitian-symplectic subspace share the same congruence normal form characterized by their signature. It is noteworthy that there exists a matrix $ M \in \GL(2n, \C) $ such that
$$
M^* \begin{pmatrix}
I_n & O \\
O & -I_n
\end{pmatrix} M = \begin{pmatrix}
O & -iI_n \\
iI_n & O
\end{pmatrix}.
$$
Consequently, the vector space $ V $ possesses a canonical basis if and only if $ \operatorname{sign}(G) = 0 $.

Recall that $ A \in \mathcal{DO}_n(\HSp(2d)) $ implies $ E^s, E^u \in C^\omega(\T, Gr(n, \C^{2d})) $ and $ E^c \in C^\omega(\T, Gr(2d - 2n, \C^{2d})) $. By Lemma \ref{U,S,C symplectic properties}, the center space $ E^c $ is Hermitian-symplectic. Our key observation is that for the center space $ E^c $, the corresponding Klein matrix $ G $ satisfies $ \operatorname{sign}(G) = 0 $.

\begin{proposition}\label{Hermitian Lift}
\begin{enumerate} 
\item  There exists  $ v_{\pm 1} ,v_{\pm 2},\dots,v_{\pm n}
\in C^\omega(\mathbb{T},\mathbb{C}^{2d})$ which form 
 canonical basis of $E^s(x)\oplus E^u(x)$, moreover,  
           $$E^u(x)=\operatorname{span}_{\mathbb{C}}(v_1(x),v_2(x),\dots,v_n(x)), \quad E^s(x)=\operatorname{span}_{\mathbb{C}}(v_{-1}(x),v_{-2}(x),\dots,v_{-n}(x)).$$
       \item There exists $ v_{\pm (n+1)} ,v_{\pm (n+2)},\dots,v_{\pm d}
\in C^\omega(\mathbb{T},\mathbb{C}^{2d})$ which form 
 canonical basis of  $E^c(x)$.
 \end{enumerate}
\end{proposition}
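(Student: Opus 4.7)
My plan is to establish part (1) by invoking Theorem \ref{COMPLEX LIFT} independently on $E^u$ and $E^s$ to obtain holomorphic periodic frames, and then adjust one of them via an analytic change of basis dictated by the symplectic pairing between the two isotropic halves. More precisely, I would take $\hat{v}_1,\dots,\hat{v}_n \in C^\omega(\T, \C^{2d})$ spanning $E^u$ and $\hat{v}_{-1},\dots,\hat{v}_{-n} \in C^\omega(\T, \C^{2d})$ spanning $E^s$. By Lemma \ref{U,S,C symplectic properties}, both $E^u$ and $E^s$ are hermitian-isotropic, so $\hat{v}_j^* J \hat{v}_k$ and $\hat{v}_{-j}^* J \hat{v}_{-k}$ vanish identically, while the hermitian-symplecticity of $E^u \oplus E^s$ forces the pairing matrix $M(x) := (\hat{v}_{-j}(x)^* J \hat{v}_k(x))_{j,k}$ to lie in $C^\omega(\T, \GL(n,\C))$. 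Setting $v_k := \hat{v}_k$ and $v_{-j} := \sum_l \overline{(M(x)^{-1})_{jl}}\,\hat{v}_{-l}$ should then yield analytic vectors with $v_{-j}^* J v_k = \delta_{jk}$, while isotropy of each half is inherited from the original frames.

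For part (2), my plan is to pull back a holomorphic periodic frame of $E^c$ from Theorem \ref{COMPLEX LIFT}, normalize its Krein matrix to a constant diagonal via Theorem \ref{diagonalization-1}, and then apply a constant congruence to reach the canonical form. Pick $\hat{u}_1,\dots,\hat{u}_{2(d-n)} \in C^\omega(\T, \C^{2d})$ spanning $E^c(x)$ and form $G(x) := (i\,\hat{u}_j(x)^* J \hat{u}_k(x))_{j,k}$. This $G$ lies in $C^\omega(\T, \Her(2(d-n),\C) \cap \GL(2(d-n),\C))$, hermiticity being automatic and invertibility following from the hermitian-symplecticity of $E^c$. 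The decisive step will be to verify that $\operatorname{sign}(G(x)) \equiv 0$: the Krein form on all of $\C^{2d}$, i.e.\ the matrix $iJ$, has eigenvalues $\pm 1$ each with multiplicity $d$ and hence signature $0$; its restriction to $E^u \oplus E^s$ is, in any frame built from isotropic halves of equal dimension, congruent to $i\bigl(\begin{smallmatrix} 0 & A \\ -A^* & 0 \end{smallmatrix}\bigr)$, whose eigenvalues come in $\pm\sigma_j$ pairs via an SVD of $A$, and hence also has signature $0$; and the Krein-orthogonality of $E^c$ with $E^u \oplus E^s$ given by Lemma \ref{U,S,C symplectic properties} makes signatures additive under the decomposition, forcing the signature on $E^c$ to vanish. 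Theorem \ref{diagonalization-1} will then produce $N \in C^\omega(\T, \GL(2(d-n),\C))$ with $N(x)^* G(x) N(x) = \operatorname{diag}(I_{d-n},-I_{d-n})$. Choosing once and for all a constant $P \in \GL(2(d-n),\C)$ with $P^* \operatorname{diag}(I_{d-n},-I_{d-n}) P = i\bigl(\begin{smallmatrix} 0 & -I_{d-n} \\ I_{d-n} & 0 \end{smallmatrix}\bigr)$, which exists because both sides are invertible hermitian forms of the same signature on a space of the same dimension, and relabeling the two halves of the columns of $(\hat{u}_1,\dots,\hat{u}_{2(d-n)}) N(x) P$ as $v_{n+1},\dots,v_d$ and $v_{-(n+1)},\dots,v_{-d}$ should then deliver the required canonical basis.

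The main obstacle I foresee is the signature computation for part (2): the whole construction rests on $\operatorname{sign}(G(x))$ being identically zero, and the argument only works once one verifies that $G(x)$ is genuinely invertible for every $x$ (so signature is locally, and hence globally on $\T$, constant) and that the ambient and $E^u \oplus E^s$ signatures combine orthogonally through the Krein decomposition. Both inputs come from Lemma \ref{U,S,C symplectic properties}, so the core of the argument reduces to checking a single linear-algebra identity in each fiber. Once this geometric input is in place, the remainder is a clean concatenation of Theorem \ref{COMPLEX LIFT} with the Analytic Sylvester Inertia Theorem.
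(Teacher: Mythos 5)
Your proposal is correct and follows essentially the same strategy as the paper's proof: use Theorem \ref{COMPLEX LIFT} to get periodic analytic frames, exploit the isotropy/hermitian-symplecticity facts from Lemma \ref{U,S,C symplectic properties}, normalize the off-diagonal block of the Krein matrix of $E^u\oplus E^s$ by an analytic right-multiplication (your $\overline{M(x)^{-1}}$ plays the role of the paper's $H(x)^{-1}$, up to the factor $i$ absorbed into the Krein form), and for $E^c$ show that the Krein matrix has signature zero so that Theorem \ref{diagonalization-1} followed by a constant congruence gives a canonical frame. The only place you diverge from the paper is in how the signature-zero step is established: you directly compute the signature of the restriction to $E^u\oplus E^s$ via an SVD of the off-diagonal pairing block $A$ (exhibiting $\pm\sigma_j$ eigenvalue pairs) and then invoke additivity of signatures across a Krein-orthogonal decomposition, whereas the paper assembles the full Krein matrix of $\C^{2d}$ using the canonical basis of $E^u\oplus E^s$ already produced in part (1) and then compares with the fixed signature of $iJ$ to force the central signature $k$ to vanish. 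Both computations are correct and give the same conclusion; your version is slightly more self-contained (it does not rely on part (1) as an input to part (2)), while the paper's is a direct bookkeeping argument.
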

\begin{proof}
       By Theorem \ref{COMPLEX LIFT}, there exists
$\hat{v}_1, \cdots, \hat{v}_d, \hat{v}_{-1}, \cdots, \hat{v}_{-d} \in C^{\omega}(\T, \mathbb{C}^{2d})$, such that 
$$E^{u}(x)=\operatorname{span}_{\C}\left(\hat{v}_1(x), \cdots, \hat{v}_n(x)\right),E^{s}(x)=\operatorname{span}_{\C}\left(\hat{v}_{-1}(x), \cdots, \hat{v}_{-n}(x)\right),$$
$$
E^c(x)=\operatorname{span}_{\C}(\hat{v}_{n+1}(x), \cdots, \hat{v}_d(x),\hat{v}_{-(n+1)}(x), \cdots, \hat{v}_{-d}(x)).
$$
By Lemma \ref{U,S,C symplectic properties}, $E^u(x),E^s(x)$ are Hemitian-isotropic subspaces, which implies there exists $H \in C^\omega(\T,\GL(n,\C))$ such that the Krein matrix $$G(\hat{v}_1(x),\cdots,\hat{v}_n(x),\hat{v}_{-1}(x),\cdots,\hat{v}_{-n}(x))=i \begin{pmatrix}
    O & -H(x) \\
    H(x)^* & O
\end{pmatrix}.$$
Hence if we take 
$$\begin{aligned}
    & \quad \begin{pmatrix}
    v_1(x) & \cdots & v_n(x) & v_{-1}(x) & \cdots & v_{-n}(x)
\end{pmatrix}\\&=\begin{pmatrix}
    \hat{v}_1(x) & \cdots & \hat{v}_n(x) & \hat{v}_{-1}(x) & \cdots & \hat{v}_{-n}(x)
\end{pmatrix}\operatorname{diag}(I_n,H(x)^{-1}),
\end{aligned},$$
then its corresponding Krein matrix satisfy 
$$G(v_1(x),\cdots,v_n(x), v_{-1}(x),\cdots, v_{-n}(x)) =
\begin{pmatrix}
    O & -iI_n \\
    iI_n & O \\
\end{pmatrix}$$
which implies desired result of $(1)$. 

To prove $(2)$, for any fixed $x \in \T$, let $k=\operatorname{sign}(G(\hat{v}_{n+1}(x),\cdots,\hat{v}_{d}(x),\hat{v}_{-n-1}(x),\cdots,\hat{v}_{-d}(x)))$. Then, there exists $N_x \in \GL(2n,\C)$ such that  $$G(\hat{v}_{n+1}(x),\cdots,\hat{v}_{d}(x),\hat{v}_{-(n+1)}(x),\cdots,\hat{v}_{-d}(x))=N_x^*\begin{pmatrix}
        I_{d-n+k} & O \\
        O & -I_{d-n-k} \\
    \end{pmatrix}N_x.$$    
    Thus the entire Krein matrix satisfies
    $$
    \begin{aligned}        & \quad G(v_1(x),\cdots,v_n(x),\hat{v}_{n+1}(x),\cdots,\hat{v}_{d}(x),v_{-1}(x),\cdots,v_{-n}(x),\hat{v}_{-(n+1)},\cdots,\hat{v}_{-d}(x))\\&=(N_x\diamond I_{d-n})^*\begin{pmatrix}
        I_{d+k} & O \\
        O & -I_{d-k} \\
    \end{pmatrix}(N_x\diamond I_{d-n}). 
    \end{aligned}
    $$    
    However, this matrix is congruent to  $J$, implying that $k=0$, then by Theorem \ref{diagonalization-1} we can find $N \in C^\omega(\T, \GL(2(d-n),\C))$ such that
$$N(x)^*G(\hat{v}_{n+1}(x),\cdots,\hat{v}_d(x),\hat{v}_{-(n+1)}(x),\cdots,\hat{v}_{-d}(x))N(x)=
\begin{pmatrix}
    I_{d-n}& O \\
    O & -I_{d-n} \\
\end{pmatrix},$$
then $(2)$ follows easily. 
\end{proof}

\bigskip

\subsection{Proof of Proposition \ref{DIAGNALIZATION LEMMA-1}}

We now complete the proof of Proposition \ref{DIAGNALIZATION LEMMA-1}. By Proposition \ref{Hermitian Lift}, we can select vectors $ v_{\pm 1}, v_{\pm 2}, \ldots, v_{\pm d} \in C^\omega(\mathbb{T}, \mathbb{C}^{2d}) $ such that for $ x \in \mathbb{T} $, we have:
$$
E^u(x) = \operatorname{span}_{\mathbb{C}}(v_1(x), v_2(x), \ldots, v_n(x)), \quad E^s(x) = \operatorname{span}_{\mathbb{C}}(v_{-1}(x), v_{-2}(x), \ldots, v_{-n}(x)),
$$
$$
E^c(x) = \operatorname{span}_{\mathbb{C}}(v_{n+1}(x), v_{n+2}(x), \ldots, v_d(x), v_{-n-1}(x), v_{-n-2}(x), \ldots, v_{-d}(x)).
$$
Furthermore, the set
$$
\{v_1(x), v_2(x), \ldots, v_d(x), v_{-1}(x), v_{-2}(x), \ldots, v_{-d}(x)\}
$$
forms a canonical basis for $\mathbb{C}^{2d}$. We define the matrix $ T(x) $ as in \eqref{DEFINITIONOF T}, which implies that $ T \in C^\omega(\mathbb{T}, \mathrm{HSp}(2d)) $ for all $ x \in \mathbb{T} $. By virtue of the invariance under dynamics, there exist $ \Lambda, \Lambda_1 \in C^\omega(\mathbb{T}, \mathrm{GL}(n, \mathbb{C})) $ and $ \Gamma \in C^\omega(\mathbb{T}, \mathrm{GL}(2d-2n)) $ such that \eqref{T(X+ALPA)} holds. The fact that $ T(x) \in \mathrm{HSp}(2d) $ implies that $ T(x+\alpha)^{-1}A(x)T(x) \in \mathrm{HSp}(2d) $. Hence, we conclude that $ \Lambda_1(x) = \Lambda^{-*}(x) $ and $ \Gamma(x) \in \mathrm{HSp}(2d-2) $, which completes the proof of Proposition \ref{DIAGNALIZATION LEMMA-1}.
\qed

\section{Proof of  Theorem \ref{SIMPLE DENSITY}}

In this section, we will try to focus on $\Sp(4,\R)$, and  give the full proof of Theorem \ref{SIMPLE DENSITY}. 
Firstly  we will prove a slightly weak result,  say the cocyles with simple Lyapunov spectrum  is dense in  $$ 
           \mathcal{R}(\Sp(4,\R))=\{A \in C^\omega(\mathbb{T},\Sp(4,\mathbb{R})):\omega^1(A)= \omega^2(A)=0 \}.
        $$

\subsection{A weaker result.}

\begin{theorem}\label{WEAK SIMPLE DENSITY}
        The cocycles $A$ with $L_1(A)>L_2(A)>0$ are dense in $\mathcal{R}(\Sp(4,\R)).$      
    \end{theorem}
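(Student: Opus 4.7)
The plan is to reduce, via the analytic block diagonalization of Section~4, the problem to a density statement for positive Lyapunov exponents of an auxiliary $\SL(2,\R)$-valued cocycle, and then invoke Avila's $\SL(2,\R)$ density theorems. By Theorem \ref{UH DENSITY}, $\mathcal{UH}(\Sp(4,\R))\cap\mathcal{R}(\Sp(4,\R))$ is dense in $\mathcal{R}(\Sp(4,\R))$, so it suffices to approximate a UH cocycle $A\in\mathcal{R}(\Sp(4,\R))$. Applying Proposition \ref{DIAGNALIZATION LEMMA} in the UH case $n=d=2$ yields $\chi\in\{1,2\}$, a conjugacy $T\in C^\omega(\chi\T,\Sp(4,\R))$, $\lambda\in C^\omega(\T,\R_+)$, and $\Lambda\in C^\omega(\chi\T,\SL(2,\R))$ with
\[
T(x+\alpha)^{-1}A(x)T(x)=\begin{pmatrix}\lambda(x)\Lambda(x)&O\\ O&\lambda(x)^{-1}\Lambda(x)^{-\top}\end{pmatrix}.
\]
Since $\lambda>0$ is scalar and $\Lambda\in\SL(2,\R)$, a direct singular-value computation gives the Lyapunov spectrum of $A$ as $\bigl\{\pm\!\int\ln\lambda\,\pm L(\Lambda)\bigr\}$. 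Uniform hyperbolicity forces $\int\ln\lambda>L(\Lambda)\geq 0$, and then $L_1(A)-L_2(A)=2L(\Lambda)$, so the simplicity $L_1(A)>L_2(A)>0$ is equivalent to $L(\Lambda)>0$.

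The task therefore reduces to perturbing $\Lambda$ within $C^\omega(\chi\T,\SL(2,\R))$ to a cocycle with positive Lyapunov exponent, after which
\[
A'(x):=T(x+\alpha)\begin{pmatrix}\lambda(x)\Lambda'(x)&O\\ O&\lambda(x)^{-1}\Lambda'(x)^{-\top}\end{pmatrix}T(x)^{-1}
\]
is $C^\omega$-close to $A$ and has simple positive spectrum. In the easy branch $\chi=1$, $\Lambda$ lies in $C^\omega(\T,\SL(2,\R))$ and the required perturbation is supplied directly by Theorem \ref{JMD POSITIVE DENSITY} of Avila--Jitomirskaya.

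The genuine difficulty is the case $\chi=2$, where $\Lambda$ is constrained to the submanifold $\mathfrak{A}\subset C^\omega(2\T,\SL(2,\R))$ defined by the antiperiodicity $\Lambda(x+1)=\mathcal{T}_2^{-1}\Lambda(x)\mathcal{T}_2$. As highlighted in the introduction, $\mathfrak{A}$ is not open in the ambient space, every $\Lambda\in\mathfrak{A}$ has zero topological degree, and consequently monotonic perturbations, Kotani theory, and the degree-based arguments of \cite{JMD, DiSheng Xu} are unavailable. My plan is to re-establish density of $\{L>0\}$ directly within $\mathfrak{A}$ using Avila's global theory for one-frequency analytic $\SL(2,\R)$-cocycles: a zero-exponent cocycle is either critical or subcritical. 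Critical cocycles form a countable union of codimension-one analytic submanifolds of $C^\omega(\T,\SL(2,\R))$, and a transversality argument shows that $\mathfrak{A}$ is not contained in any of them, so a generic $\mathfrak{A}$-perturbation escapes the critical locus. For subcritical $\Lambda$, the almost reducibility theorem, together with Lemma \ref{zrho} which states that the fibered rotation number is rational with respect to $\alpha$, places $(\alpha,\Lambda)$ on the boundary of the UH locus; one then constructs an $\mathfrak{A}$-admissible perturbation that crosses this boundary. The main obstacle I anticipate is the Liouvillean subcritical regime, where only quantitative almost reducibility (in the spirit of \cite{ARC}) is available and the perturbation must be built explicitly so as to respect the $\mathfrak{A}$-symmetry while being large enough to land in the open UH set.
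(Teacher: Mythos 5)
Your reduction follows the paper's structure: perturb into $\mathcal{UH}(\Sp(4,\R))\cap\mathcal{R}(\Sp(4,\R))$ via Theorem \ref{UH DENSITY}, block-diagonalize via Proposition \ref{DIAGNALIZATION LEMMA}, compute the spectrum $\{\pm\int\ln\lambda\pm L(\Lambda)\}$, and reduce to perturbing $\Lambda$ to have $L(\Lambda)>0$. The $\chi=1$ branch via Theorem \ref{JMD POSITIVE DENSITY} is also the same. However, there are two genuine gaps in the $\chi=2$ branch.

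First, you miss the observation that the hypothesis $A\in\mathcal{R}(\Sp(4,\R))$ completely removes the critical case from consideration. The paper shows $0=\omega_1(A)=\omega(\lambda)+\omega(\Lambda)$, with $\omega(\lambda)\geq 0$ (by Jensen and quantization) and $\omega(\Lambda)\geq 0$, forcing $\omega(\Lambda)=0$. Thus $\Lambda$ is automatically subcritical when $L(\Lambda)=0$, and the theorem only requires density of $\{L>0,\omega=0\}$ inside $\{\omega=0\}\cap\mathfrak{A}$ (Proposition \ref{ANTIPERIODIC POSITIVE DENSE}), not in all of $\mathfrak{A}$. Your proposal to treat the critical case by a transversality/codimension argument is thus unnecessary for this theorem; it is also unsubstantiated, since $\mathfrak{A}$ is not open and showing that the codimension-one critical strata meet $\mathfrak{A}$ transversally would require a separate argument. (The paper does handle the critical case, but only later for Theorem \ref{SIMPLE DENSITY}, via Proposition \ref{RARECRITICAL}, and by a quite different mechanism: showing that criticality would force a degree-positive rotational normal form, contradicting $\deg=0$ on $\mathscr{A}$.)

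Second, the step you phrase as ``one then constructs an $\mathfrak{A}$-admissible perturbation that crosses this boundary'' is where nearly all the technical content lies, and you give no argument. The difficulty is that the conjugacy $B$ produced by ART is not itself in $\mathscr{A}$. The paper resolves this by analyzing the operator $S(x)=B(x)^{-1}\mathcal{T}B(x+\frac12)$: using Lemma \ref{zrho} (rotation number $\equiv 0\ \mathrm{mod}\ \alpha\Z+\Z$) and Lemma \ref{degree} to force $\deg B=0$, then ergodicity and the identity $S(x-\frac12)S(x)=I_2$ to show $S$ is a constant involution with $\det S=-1$, hence conjugate by some constant $T\in\SL(2,\R)$ to $\mathcal{T}$, making $B(\cdot)T\in\mathscr{A}$; there is also a case distinction $d=0$ vs $d\neq 0$, and the Liouvillean regime requires replacing exact reducibility by the quantitative estimates of \cite{ARC} together with Lemma \ref{KYLINZHOU CRUCIAL LEMMA} to control $S_n$. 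None of this appears in your proposal, so as written it is an outline rather than a proof.
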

\begin{proof}
By Theorem \ref{UH DENSITY}, we only need to prove that for any $A \in \mathcal{UH}(\Sp(4,\R))$, there exists $\tilde{A}\in \mathcal{R}(\Sp(4,\R))$ close to $A$ with $L_1(\tilde{A})>L_2(\tilde{A})>0$.
Of course, we only need to consider the case  $L_1(A)=L_2(A)>0$. 

As $A \in \mathcal{UH}(\Sp(4,\R))$, by Proposition \ref{DIAGNALIZATION LEMMA}, there exist $T \in C^\omega( \chi \T,\Sp(4,\R))$, $\lambda \in C^\omega(\T,\R_+)$,
    $\Lambda \in C^\omega(\chi \T,\SL(2,\R))$ where $\chi=1$ or $2$, such that  
   $$      T(x+\alpha)^{-1}A(x)T(x)=\begin{pmatrix}
    \lambda(x)\Lambda(x) & O \\
    O & \lambda(x)^{-1}\Lambda(x)^{-\top}
\end{pmatrix}.$$
Without lose of generality, we just assume $L(\lambda):=\int \ln|\lambda(x)|dx >0$. Thus one can easily check that 
 $L_1(A)=L_2(A)>0$ implies that $L(\Lambda)=0$ and $L_1(A)=L_2(A)=L(\lambda)$.
 If we denote  $$\omega (\lambda)= \lim\limits_{y\rightarrow 0^+}\frac{1}{2\pi y}(\int \ln|\lambda(x+iy)|dx-\int \ln|\lambda(x)|dx),$$
by Jensen's formula, it follows that $\omega (\lambda)\geq 0$, and $\omega (\lambda)\in \Z$. 
Thus $0=\omega_1(A)=\omega(\lambda)+\omega(\Lambda)$ implies that $\omega(\Lambda)=0.$

 If $\tau(E^s)=\tau(E^u)=1$, then $\chi=1$. By Theorem \ref{JMD POSITIVE DENSITY} we can take $\tilde{\Lambda} \in C^\omega
 (\T,\SL(2,\R))$ close to $\Lambda$ while $L(\tilde{\Lambda})>0$, moreover by the following Theorem,   \begin{theorem}\cite{1D}\label{CONTIOUS LYAPUNOV}
     The functions $\mathbb{R} \times C^\omega\left(\T, \GL(d,\C)\right) \ni(\alpha, A) \mapsto L_k(\alpha, A) \in$ $(-\infty, +\infty)$ are continuous at any $\left(\alpha^{\prime}, A^{\prime}\right)$ with $\alpha^{\prime} \in \mathbb{R} \backslash \mathbb{Q}$.
\end{theorem}
 we can take $\tilde{\Lambda}$ so close to $\Lambda$ that $L(\tilde{\Lambda})<L(\lambda).$ Then we set 
 \begin{equation}\label{TILDEA}
     \tilde{A}(x)=T(x+\alpha)\begin{pmatrix}
    \lambda(x)\tilde{\Lambda}(x) & O \\
    O & \lambda(x)^{-1}\tilde{\Lambda}(x)^{-\top}
\end{pmatrix}T(x)^{-1}.
 \end{equation}
Since $\mathcal{R}(\Sp(4,\R))$ is an open set, we can take $\tilde{A}$ so close to $A$ such that $\tilde{A}\in \mathcal{R}(\Sp(4,\R))$, and $L_1(\tilde{A})=L(\lambda)+L(\tilde{\Lambda})>L(\lambda)-L(\tilde{\Lambda})=L_2(\tilde{A})>0,$ which is just what we need.
 
If, instead,  $\tau(E^s)=\tau(E^u)=-1,$ then $\chi=2$. In the rest fo the papers, we set $\mathcal{T}=\mathcal{T}_2$ and $\mathcal{P}=\mathcal{P}_2$ fo for convenience. Then 
 we have $T(x+1)=T(x)\mathcal{P}$ and $\Lambda(x+1)=\mathcal{T}^{-1}\Lambda(x)\mathcal{T}$. In this case, denote 
$$\mathfrak{A}=\{\Lambda \in C^\omega(2\T,\SL(2,\R)): \Lambda(x+1)=\mathcal{T}^{-1}\Lambda(x)\mathcal{T}\}.$$
then the key observation is the following:
\begin{proposition}\label{ANTIPERIODIC POSITIVE DENSE}
    Let $\alpha \in \R\backslash\Q$. Then $\{\Lambda \in \mathfrak{A}:L(\Lambda)>0, \omega(\Lambda)=0\}$ is dense in $\{\Lambda \in \mathfrak{A}:\omega(\Lambda)=0\}.$     
\end{proposition}

For simplicity of the proof, if we denote 
\begin{equation}\label{DER ANTISELT}
    \mathscr{A}=\{\Lambda \in C^\omega(\T,\SL(2,\R)): \Lambda(x+\frac{1}{2})=\mathcal{T}^{-1}\Lambda(x)\mathcal{T}\},
\end{equation}
then Proposition \ref{ANTIPERIODIC POSITIVE DENSE} is equivalently to the following 
\begin{proposition}\label{ANTIPERIODIC POSITIVE DENSE-1}
    Let $\alpha \in \R\backslash\Q$. Then $\{\Lambda \in \mathscr{A}:L(\Lambda)>0, \omega(\Lambda)=0\}$ is dense in $\{\Lambda \in \mathscr{A}:\omega(\Lambda)=0\}.$     
\end{proposition} 

We left the proof of Proposition \ref{ANTIPERIODIC POSITIVE DENSE-1} (thus Proposition \ref{ANTIPERIODIC POSITIVE DENSE}) until  the last section. We first finish the proof of Theorem \ref{WEAK SIMPLE DENSITY} given Proposition \ref{ANTIPERIODIC POSITIVE DENSE}.
By Proposition \ref{ANTIPERIODIC POSITIVE DENSE} we can take $\tilde{\Lambda} \in \mathfrak{A}$ close to $\Lambda$ while $L(\tilde{\Lambda})>0$, and by Theorem \ref{CONTIOUS LYAPUNOV} we can take $\tilde{\Lambda}$ so close to $\Lambda$ that $L(\tilde{\Lambda})<L(\lambda).$

Then we set $\tilde{A}$ as in \eqref{TILDEA}, and we can verify that $\tilde{A}\in C^\omega(\T, \Sp(4,\R))$. Since $\mathcal{R}(\Sp(4,\R))$ is an open set, we can take $\tilde{A}$ so close to $A$ such that $\tilde{A}\in \mathcal{R}(\Sp(4,\R))$, and $L_1(\tilde{A})=L(\lambda)+L(\tilde{\Lambda})>L(\lambda)-L(\tilde{\Lambda})=L_2(\tilde{A})>0,$ which finishes the whole proof.
\end{proof}
\subsection{Proof of Theorem \ref{SIMPLE DENSITY}}

We proceed to demonstrate Theorem \ref{SIMPLE DENSITY} by leveraging Theorem \ref{WEAK SIMPLE DENSITY}. The crux of the proof of Theorem \ref{WEAK SIMPLE DENSITY} hinges on Proposition \ref{ANTIPERIODIC POSITIVE DENSE}, which asserts that uniformly hyperbolic cocycles are dense within the class of regular cocycles that maintain the group structure. The primary obstacle in proving Theorem \ref{SIMPLE DENSITY} is the possibility of positive acceleration, or more specifically, when the cocycle $(\alpha, \Lambda)$ is critical.  Our objective is to establish that critical behavior is still rare in $\mathscr{A}$. Indeed, it is feasible to extend the approach in \cite{Global} to show that critical cocycles are contained within a countable union of codimension-one analytic submanifolds of $\mathscr{A}$.  Here,
 the following result is sufficient for us:
\begin{proposition}\label{RARECRITICAL}
    $\{A \in  \mathscr{A}: \omega(A)>0,L(A)=0\}$ has no interior in $ \mathscr{A}$.
\end{proposition}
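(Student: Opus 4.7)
The plan is to combine Avila's stratification of the critical set in the ambient space $C^\omega(\T,\SL(2,\R))$ with a Baire-category and identity-principle argument adapted to $\mathscr{A}$. By Avila's global theory \cite{Global}, the critical set admits a countable decomposition $\mathrm{Crit}=\bigcup_{n\ge 1}\mathrm{Crit}_n$, where each $\mathrm{Crit}_n$ is closed and locally contained in a real-analytic codimension-one submanifold $M_n=\{\phi_n=0\}$. It therefore suffices to verify that each $M_n\cap\mathscr{A}$ has empty interior in $\mathscr{A}$, since $\mathscr{A}$ (being closed in the ambient function space) is a Baire space and a countable union of closed empty-interior sets is meager.

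The structural input we need about $\mathscr{A}$ is that it is a closed real-analytic Lie subgroup of $C^\omega(\T,\SL(2,\R))$ under pointwise multiplication: the condition $\Lambda(x+\tfrac12)=\mathcal{T}^{-1}\Lambda(x)\mathcal{T}$ is preserved by products and inverses, and $\mathscr{A}$ is modeled locally via the exponential map on the Lie algebra $\mathfrak{a}:=\{V\in C^\omega(\T,\mathfrak{sl}(2,\R)):V(x+\tfrac12)=\mathcal{T}^{-1}V(x)\mathcal{T}\}$. In particular $\mathscr{A}$ is path-connected, so each restriction $\phi_n|_\mathscr{A}$ is a real-analytic function on a connected real-analytic manifold, and the identity theorem guarantees that its zero set has empty interior in $\mathscr{A}$ as soon as $\phi_n|_\mathscr{A}$ is not identically zero, i.e.\ as soon as some $B\in\mathscr{A}$ satisfies $B\notin M_n$.

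To produce such a test cocycle $B$, we use the family $D_t:=\operatorname{diag}(e^t,e^{-t})\in\mathscr{A}$ (these constant matrices commute with $\mathcal{T}$, so belong to $\mathscr{A}$). For $t\neq 0$ each $D_t$ is a constant uniformly hyperbolic cocycle with $L(D_t)=|t|>0$ and $\omega(D_t)=0$, so $D_t\notin\mathrm{Crit}$. The main obstacle of the proof is to upgrade this to $D_t\notin M_n$ for at least one $t$: that is, to verify that the analytic curve $t\mapsto D_t$ is not entirely contained in any of the codimension-one strata $M_n$ (which in principle can contain non-critical cocycles). This is where a careful inspection of the explicit structure of the strata in \cite{Global}---indexed by positive acceleration together with a rotation-number class---is required; the family $D_t$ carries identically vanishing acceleration and trivial fibered rotation number, which forces $\phi_n(D_t)$ to be a nontrivial real-analytic function of $t$ and hence nonzero for all but a discrete set of $t$. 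Combining this nontriviality with the Baire step outlined above completes the proof.
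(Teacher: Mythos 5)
You identify the key difficulty yourself---showing that the strata $M_n$ from Avila's decomposition of the critical set do not contain $\mathscr{A}$---but you do not actually resolve it. The step you call ``the main obstacle'' is to show that for each stratum $M_n=\{\phi_n=0\}$ there is some point of $\mathscr{A}$ lying outside $M_n$. You assert, without proof, that the constant family $D_t=\operatorname{diag}(e^t,e^{-t})$ escapes every $M_n$ because it has vanishing acceleration and trivial rotation number. This does not follow: the strata $M_n$ are codimension-one analytic submanifolds that merely \emph{contain} the critical set locally, and generically they contain plenty of non-critical cocycles, including uniformly hyperbolic ones. The functionals $\phi_n$ are not the acceleration itself but auxiliary objects built inside Avila's construction, and nothing you say rules out that some $\phi_n$ vanishes identically along the diagonal family---or even on all of $\mathscr{A}$. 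Verifying this would require reopening Avila's construction and controlling the strata on the symmetric subspace $\mathscr{A}$, which is exactly what is omitted. Secondary concerns: the Baire step as you phrase it needs the locally defined $\phi_n$ to glue into globally defined analytic functionals, and the claimed path-connectedness of $\mathscr{A}$ is asserted rather than proved.

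The paper bypasses the stratification entirely with a direct argument by contradiction. Assuming an open subset, one passes to a sub-open set $\mathcal{V}$ of minimal acceleration $\omega_*>0$, fixes $A\in\mathcal{V}$, and observes that $L(Ae^{tw})\equiv 0$ for all small $t$ and all $w$ tangent to $\mathscr{A}$. Feeding this constancy into Avila's formula for the derivative of the Lyapunov exponent (Lemma~\ref{LEMMA9}) applied to the regular, uniformly hyperbolic cocycle $A(\cdot+iy)$ for small $y>0$, and exploiting the parity constraints that $A\in\mathscr{A}$ forces on the coefficients $q_1,q_2,q_3$, one deduces that $q_2,q_3$ extend analytically through $\Im x=0$ and are purely imaginary there. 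Hence the unstable/stable directions of $A(\cdot+iy)$ are non-real complex-conjugate lines as $y\to 0^+$, so $A$ is conjugate to a rotation-valued cocycle, whence $\deg A>0$, contradicting Lemma~\ref{DEGREE=0}. This route avoids the stratification, the Baire argument, and any connectedness hypothesis, and is self-contained given Lemma~\ref{LEMMA9} together with Theorems~\ref{HOLOMORPHIC SECTION} and~\ref{thm-ii}.
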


To prove Proposition \ref{RARECRITICAL}, the elementary observation is  the following: 
\begin{lemma}\label{DEGREE=0}
    For any $A \in \mathscr{A}$, we have $\operatorname{deg}(A)=0$.
\end{lemma}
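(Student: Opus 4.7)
The plan is to prove $\deg(A)=0$ by polar decomposition, exploiting the symmetry $A(x+\tfrac12)=\mathcal{T}^{-1}A(x)\mathcal{T}$ that defines $\mathscr{A}$. Recall that for $A\in C(\T,\SL(2,\R))$ the topological degree $\deg(A)\in\pi_1(\SL(2,\R))\cong\Z$ is the winding number of the compact factor in the Iwasawa (polar) decomposition, because the space of symmetric positive definite matrices of determinant one is contractible, so the noncompact part contributes nothing topologically.

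First I would write $A(x)=K(x)P(x)$ analytically on $\T$, with $K(x)\in SO(2)$ and $P(x)$ symmetric positive definite of determinant $1$; analyticity and uniqueness of this decomposition are standard. A direct computation shows that for any rotation $R_\theta$,
$$\mathcal{T}^{-1}R_\theta \mathcal{T}=\begin{pmatrix}-1&0\\0&1\end{pmatrix}\begin{pmatrix}\cos\theta&-\sin\theta\\\sin\theta&\cos\theta\end{pmatrix}\begin{pmatrix}-1&0\\0&1\end{pmatrix}=R_{-\theta},$$
so conjugation by $\mathcal{T}$ inverts rotations. Since $\mathcal{T}=\mathcal{T}^{-1}=\mathcal{T}^\top$, the matrix $\mathcal{T}^{-1}P(x)\mathcal{T}$ is again symmetric with the same signature and determinant as $P(x)$, hence is symmetric positive definite of determinant $1$. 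Applying the defining relation of $\mathscr{A}$ and invoking the uniqueness of polar decomposition therefore gives
$$K(x+\tfrac12)=\mathcal{T}^{-1}K(x)\mathcal{T}=K(x)^{-1},\qquad P(x+\tfrac12)=\mathcal{T}^{-1}P(x)\mathcal{T}.$$

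Next, lift $K$ continuously to a phase $\theta:\R\to\R$ with $K(x)=R_{\theta(x)}$. The relation $K(x+\tfrac12)K(x)=I$ forces $\theta(x+\tfrac12)+\theta(x)\in 2\pi\Z$, and by continuity of $\theta$ this integer multiple is constant, equal to $2\pi n$ for some $n\in\Z$. Iterating once more,
$$\theta(x+1)=2\pi n-\theta(x+\tfrac12)=2\pi n-(2\pi n-\theta(x))=\theta(x),$$
so $\theta(1)-\theta(0)=0$ and the winding number of $K$ over $[0,1]$ vanishes. Consequently $\deg(A)=\deg(K)=0$.

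There is no serious obstacle here; the entire argument is a linear-algebraic consequence of the symmetry of $\mathcal{T}$ and the involution $\mathcal{T}^2=I$, combined with the standard fact that conjugation by $\mathcal{T}$ reverses rotations in $SO(2)$. The one point worth verifying carefully is that the polar decomposition respects the symmetry, which is exactly where $\mathcal{T}=\mathcal{T}^\top$ is used to guarantee that $\mathcal{T}^{-1}P(x)\mathcal{T}$ remains symmetric positive definite, allowing invocation of uniqueness.
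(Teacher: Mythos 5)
Your proof is correct. The paper's argument is a homotopy-class argument: since $A$ is homotopic to $R_{lx}$ with $l=\deg A$, the shift $A(\cdot+\tfrac12)$ has degree $l$ while $\mathcal{T}^{-1}A(\cdot)\mathcal{T}$ has degree $-l$ (conjugation by the reflection $\mathcal{T}$ reverses orientation in $SO(2)$), so the defining relation of $\mathscr{A}$ forces $l=-l$. You instead implement the same mechanism concretely via polar decomposition: extract the $SO(2)$ factor $K$, show uniqueness of the decomposition together with $\mathcal{T}=\mathcal{T}^\top$ gives $K(x+\tfrac12)=K(x)^{-1}$, and then read off the vanishing of the winding number from the phase relation $\theta(x+\tfrac12)+\theta(x)\equiv\mathrm{const}$. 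The underlying idea --- conjugation by $\mathcal{T}$ inverts rotations, and the half-period relation therefore kills the degree --- is identical; what your version buys is self-containedness (no appeal to the homotopy classification of maps $\T\to\SL(2,\R)$, just the contractibility of the positive-definite factor), at the cost of slightly more bookkeeping. Both are valid.
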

\begin{proof}
 Since $A$ is homotopic to $R_{lx}$ for some $l \in \mathbb{Z}$, it follows that $A\left(x + \tfrac{1}{2}\right)$ is homotopic to $R_{l\left(x + \frac{1}{2}\right)}$, and $\mathcal{T}^{-1} A(x) \mathcal{T}$
    is homotopic to $R_{-lx}$. 
    Since $A \in \mathscr{A}$, we see that  $R_{l\left(x + \frac{1}{2}\right)}$ is homotopic to $R_{-lx}$, which implies $l = 0$. The result follows.
\end{proof}

Moreover, one important fact is that holomorphic extension  of the quasiperiodic cocycle is a regular cocycle, which was proved in \cite{1D}:
\begin{theorem}\label{thm-ii}\cite{1D}
     Let $\alpha \in \mathbb{R} \backslash \mathbb{Q}, A \in C^\omega\left(\T, \GL(d,\C) \right)$. Then for every $t \neq 0$ small enough, $(\alpha, A(\cdot+i t))$ is $k$-regular.
\end{theorem}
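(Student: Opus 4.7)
\textbf{Proof proposal for Theorem \ref{thm-ii}.} The plan is to establish that $\varphi_k(y) := L^k(A(\cdot + iy))$ is a convex function of $y$ whose one-sided derivatives take values in a discrete subset of $\mathbb{R}$. A convex function with this property is piecewise affine with only locally finitely many breakpoints, so there exists $\delta>0$ such that $\varphi_k$ is affine on each component of $(-\delta,\delta)\setminus\{0\}$; for any such $t$, affineness of $\varphi_k$ in a neighbourhood of $t$ is exactly the definition of $k$-regularity of $(\alpha, A(\cdot+it))$.

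First I would prove convexity of $\varphi_k$. For each $n$, the map $z \mapsto \Lambda^k A^n(z)$ is holomorphic on the strip of holomorphy of $A$, so $z \mapsto \log \|\Lambda^k A^n(z)\|$ is plurisubharmonic there, because on holomorphic matrix-valued functions the log-norm is a maximum of logs of holomorphic scalar functions. Averaging in $x \in \T$ preserves subharmonicity in $y$, so $y \mapsto \frac{1}{n}\int_{\T}\log\|\Lambda^k A^n(x+iy)\|\,dx$ is convex, and convexity survives the decreasing limit $n\to\infty$ defining $L^k(A(\cdot+iy))$.

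The substantive step is quantization of acceleration: I would show that the left and right derivatives of $\varphi_k$ at any $y$ in the strip belong to a discrete subgroup of $\mathbb{R}$. Following Avila's approach, generalised via exterior powers, I would approximate $\alpha$ by rationals $p_n/q_n$. For a rational frequency $p/q$, the top Lyapunov exponent of $\Lambda^k A$ at frequency $p/q$ can be computed, via Jensen's formula applied to the characteristic polynomial of $\Lambda^k A^q(x+iy)$ (whose coefficients are holomorphic in $z=x+iy$), as a sum of $\log^+$ of absolute values of algebraic functions of $z$, producing a slope in $y$ that is a multiple of $1/q$. Continuity of Lyapunov exponents at irrational frequency (Theorem \ref{CONTIOUS LYAPUNOV}) then forbids accumulation of arbitrarily many distinct limiting slopes as $p_n/q_n \to \alpha$, and forces the one-sided derivatives of $\varphi_k$ at $\alpha$ to lie in a discrete subset of $\mathbb{R}$ as well.

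Combining the two steps, $\varphi_k$ is convex and its right derivative is non-decreasing and discrete-valued; since $\varphi_k$ is continuous and locally bounded near $y=0$, its slope is bounded on a neighbourhood of $0$, so only finitely many breakpoints can accumulate near $0$. Choosing $\delta$ smaller than the distance from $0$ to the next breakpoint on each side, we obtain that $\varphi_k$ is affine in a neighbourhood of every $t$ with $0<|t|<\delta$, i.e.\ $(\alpha, A(\cdot+it))$ is $k$-regular. The main obstacle is the quantization step: executing Jensen's formula for exterior-power cocycles at rational frequencies, and then transferring the resulting slope-integrality uniformly through the approximation $p_n/q_n\to\alpha$, is where the technical core of Avila's global theory is concentrated; convexity and the concluding argument are comparatively soft.
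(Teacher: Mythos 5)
The statement is quoted in this paper from Avila--Jitomirskaya--Sadel \cite{1D} without proof, so the comparison can only be against the argument of \cite{1D}. Your plan --- convexity of $\varphi_k(y):=L^k(A(\cdot+iy))$, quantization of its one-sided derivatives, hence piecewise affinity with isolated breakpoints, hence affineness on $(0,\delta)$ and $(-\delta,0)$ separately --- is indeed the route of \cite{1D} (following Avila's global theory), and your convexity argument via plurisubharmonicity of $\log\|\Lambda^k A^n\|$ and averaging over $\T$ is sound.

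There is, however, a genuine gap in the quantization step. You compute that at rational frequency $p/q$ the slope of $\varphi_k$ lies in $\tfrac{1}{q}\Z$, and then appeal to continuity as $p_n/q_n\to\alpha$ to conclude that the slopes of $\varphi_k$ at $\alpha$ are discrete. But $\bigcup_q\tfrac{1}{q}\Z$ is dense in $\R$, so a bound whose scale degrades like $1/q_n$ transfers no constraint whatsoever as $q_n\to\infty$: every real number is a limit of points of $\tfrac{1}{q_n}\Z$, and the phrase ``continuity forbids accumulation of arbitrarily many distinct limiting slopes'' is a restatement of the desired conclusion rather than an argument for it. What \cite{1D} actually proves, and what makes a limiting argument close, is a quantization at a scale \emph{independent of $q$}. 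The mechanism is that the eigenvalues of the $q$-step iterate at frequency $p/q$ are invariant under $x\mapsto x+\tfrac{p}{q}$ (conjugation by $A(x)$) and hence $\tfrac{1}{q}$-periodic in $x$; so $\varphi_k$ is an integral over a fundamental domain of length $\tfrac{1}{q}$, and Jensen/Riesz counting there puts the slope increments in $\tfrac{2\pi}{l}\Z$ with $l\leq d$ \emph{independent of $q$} (the $l$ tracks the branching of the algebraic eigenvalue functions: the spectral radius of $\begin{pmatrix}0 & 1\\ e^{2\pi iz}&0\end{pmatrix}$ is $e^{-\pi y}$, with half-integer slope). With a $q$-independent lattice in hand, the passage to irrational $\alpha$ via uniform convergence of convex functions does go through; with the $\tfrac{1}{q}$-lattice you asserted, it cannot. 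Obtaining this $q$-uniform quantization is the technical core of \cite{1D} and is exactly the step your sketch replaces with a hand-wave.
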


Once we have these, we can now finish the proof of  Proposition \ref{RARECRITICAL}:  

\begin{proof}  
We prove this by contradiction. Assume $\{A \in \mathscr{A}: \omega(A)>0,L(A)=0\}$ includes an open set $\mathcal{U}\in \mathscr{A}$, take $\omega_*=\min_{A\in \mathcal{U}}\omega(A)>0$. By the upper-semicontinuity of the acceleration, $\mathcal{V}=\{A \in \mathcal{U}: \omega(A)=\omega_*, L(A)=0\}$ is an open set of $\mathscr{A}$.

Fix some $A\in \mathcal{V}$. 
By Theorem \ref{thm-ii} and Theorem \ref{DOMINATE=REGULAR}, there exists $\epsilon$ such that for any $y \in (0,\epsilon)$, $A_{iy}:=A(\cdot+iy) \in \mathcal{UH}(\SL(2,\C))$, and therefore the Lyapunov of $(\alpha,A(\cdot+iy) )$ is differentiable in neighborhood of $\mathscr{A}$.  Indeed, for any $w \in C^\omega
(\T,sl(2,\R))$ with the property
$w(x+\frac{1}{2})=\mathcal{T}^{-1}w(x)\mathcal{T},$
$A e^{tw} \in \mathcal{U}$ for any $t$ small enough.  
By the assumption, 
\begin{equation}\label{L(Lambda)}
L(A(\cdot+iy)e^{tw(\cdot+iy)})=L(A e^{tw})+2\pi \omega_* y =2\pi \omega_* y
\end{equation}
for any $t$ small enough. We take derivative with respect to $t$ in 
\eqref{L(Lambda)} to get 
\begin{equation}\label{L(Lambd_+1}
\frac{d}{d t}  L(A(\cdot+iy)e^{tw(\cdot+iy)})=0, \quad \text{ at } t=0.
\end{equation}

Next we estimate  the coefficients of derivative of the Lyapunov exponent. To do this, note for any  $A \in C^\omega(\T,\SL(2,\C))$ such that $(\alpha,A)$ is uniformly hyperbolic. Let $u, s: \T \rightarrow \mathbb{P C}^2$ be the unstable and stable directions. Also let $B: \T \rightarrow \mathrm{\SL}(2, \mathbb{C})$ be analytic with column vectors in the directions of $u(x)$ and $s(x)$. Then there exists $\lambda \in C^\omega(\T,\C-\{0\})$ and $D \in C^\omega(\T, \SL(2,\C))$ such that  
\begin{equation}
    B(x+\alpha)^{-1} A(x) B(x)=\left(\begin{array}{cc}
\lambda(x) & 0 \\
0 & \lambda(x)^{-1}
\end{array}\right)=D(x) .
\end{equation}
Obviously $L(A)=L(D)=\int_{\T} \operatorname{Re} \log \lambda(x) d x.$
Write
\begin{equation}\label{B(x)}
    B(x)=\left(\begin{array}{ll}
a(x) & b(x) \\
c(x) & d(x)
\end{array}\right) .
\end{equation}
and denote 
\begin{equation}\label{q1(x)}
    q_1(x)=a(x) d(x)+b(x) c(x), \quad q_2(x)=c(x) d(x) \quad \text { and } \quad q_3(x)=-b(x) a(x)
\end{equation}
Actually, $q_i, i=1,2,3$ are the coefficients of the derivative of the Lyapunov exponent, which is clear  from the next lemma:
 \begin{lemma}\label{LEMMA9}\cite{Global}
     Let $(\alpha, A) \in \mathcal{U H}$ and let $q_1, q_2, q_3: \T \rightarrow \mathbb{C}$ be the coefficients of the derivative of the Lyapunov exponent. Let $w: \T \rightarrow \mathfrak{s l}(2, \mathbb{C})$ be analytic, and write
$$
w=\left(\begin{array}{cc}
w_1 & w_2 \\
w_3 & -w_1
\end{array}\right)
$$
Then one has
$$
\frac{d}{d t} L\left( A e^{t w}\right)=\operatorname{Re} \int_{\T} \sum_{i=1}^3 q_i(x) w_i(x) d x, \quad \text { at } t=0 .
$$
 \end{lemma}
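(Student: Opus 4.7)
The plan is to reduce the variation of $L(Ae^{tw})$ around the uniformly hyperbolic cocycle $(\alpha, A)$ to a variation of a diagonal cocycle, and then to exploit the analytic dependence of the invariant sections on the perturbation parameter. Writing $\tilde w(x) := B(x)^{-1} w(x) B(x)$, the conjugacy $B(x+\alpha)^{-1} A(x) B(x) = D(x)$ together with the identity $B^{-1} e^{tw} B = e^{t \tilde w}$ gives
\begin{equation*}
L(Ae^{tw}) = L\bigl(D \cdot e^{t \tilde w}\bigr),
\end{equation*}
so the problem becomes computing the first-order variation of $L(D \cdot e^{t \tilde w})$ at $t=0$.

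First, I would carry out the algebraic computation of the $(1,1)$-entry of $\tilde w$. Using $B = \begin{pmatrix} a & b \\ c & d\end{pmatrix}$ with $ad-bc = 1$ and $B^{-1} = \begin{pmatrix} d & -b \\ -c & a\end{pmatrix}$, a direct expansion of $B^{-1} w B$ yields
\begin{equation*}
(B^{-1} w B)_{11} = (ad+bc)\, w_1 + cd\, w_2 - ab\, w_3 = q_1 w_1 + q_2 w_2 + q_3 w_3,
\end{equation*}
matching the definitions \eqref{q1(x)} of the $q_i$. Thus it suffices to prove the ``diagonal reduction'' claim that for a diagonal uniformly hyperbolic cocycle $D = \operatorname{diag}(\lambda, \lambda^{-1})$ and any analytic $\tilde w: \T \to \mathfrak{sl}(2,\C)$,
\begin{equation*}
\frac{d}{dt} L\bigl(D \cdot e^{t\tilde w}\bigr)\Big|_{t=0} = \operatorname{Re} \int_\T \tilde w_{11}(x)\, dx.
\end{equation*}

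For the diagonal reduction, I would seek a perturbed invariant unstable section of the form $u_t(x) = e_1 + t\, v(x) e_2 + O(t^2)$, together with a perturbed expansion factor $\lambda_t(x) = \lambda(x) + t\, \delta\lambda(x) + O(t^2)$, so that the invariance relation $D(x) e^{t\tilde w(x)} u_t(x) = \lambda_t(x) u_t(x+\alpha)$ holds to first order. Matching coefficients of $e_1$ and $e_2$ yields $\delta \lambda = \lambda\, \tilde w_{11}$ and the cohomological equation $v(x+\alpha) = \lambda^{-2}(x)\, (v(x) + \tilde w_{21}(x))$ for $v$. Because $(\alpha, D)$ is uniformly hyperbolic, $\lambda^{-2}$ provides uniform contraction, so this cohomological equation admits a unique analytic solution, confirming that the formal expansion actually converges to the analytic perturbed invariant section. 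Since $L(D e^{t\tilde w}) = \int \operatorname{Re} \log \lambda_t$, differentiating at $t=0$ gives the stated formula.

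The main obstacle is the last step: justifying rigorously that $L(D e^{t\tilde w})$ is differentiable at $t=0$ with derivative computed by the first-order perturbation of the unstable section. This can be handled either by invoking the general fact that the Lyapunov exponent depends analytically on a uniformly hyperbolic cocycle (via the holomorphic dependence of $E^\pm$ on the cocycle from Theorem \ref{HOLOMORPHIC SECTION}), or directly by writing $L(D e^{t\tilde w})$ as the integral of the logarithm of the unstable expansion factor and controlling the remainder via the contraction rate of $\lambda^{-2}$; the latter route is more self-contained but requires a quantitative bound on the solution of the cohomological equation in terms of $\|\tilde w\|$.
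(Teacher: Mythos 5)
Your proposal is correct and follows the standard route: conjugate by $B$ to reduce to a diagonal cocycle, identify $\tilde w_{11}=(B^{-1}wB)_{11}=q_1w_1+q_2w_2+q_3w_3$, and compute the first-order perturbation of the unstable section (whose cohomological equation is uniformly contracting, hence analytically solvable). This is essentially the proof in the cited reference \cite{Global}; the present paper does not reproduce it.
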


The idea here is that if we further assume $A \in \mathscr{A}$, the conjugacy $B$ will inherite this group structure, this will imply   
the coefficients of derivative of the Lyapunov exponent will have some special property. We start with the following:

\begin{lemma}
  Let  $A \in \mathcal{UH}(\SL(2, \C))$ with that  $A(x+\frac{1}{2})=\mathcal{T}^{-1}A(x)\mathcal{T}$. Then there exist $\tilde{B} \in C^\omega(\T,\SL(2,\C))$, $\tilde{\lambda} \in C^\omega(\T,\C- \{0\})$ such that
  \begin{eqnarray}\label{B(x)1}
    \tilde{B}(x+\frac{1}{2})&=&\mathcal{T}^{-1}\tilde{B}(x)\mathcal{T},\\
    \label{conju}
    \tilde{B}(x+\alpha)^{-1} A(x) \tilde{B}(x)&=&\left(\begin{array}{cc}
\tilde{\lambda}(x) & 0 \\
0 & \tilde{\lambda} (x)^{-1}
\end{array}\right).
\end{eqnarray}
\end{lemma}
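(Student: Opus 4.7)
\emph{Proof plan.} My strategy is to produce $\tilde B$ by first choosing any analytic diagonalizing frame $B$ coming from uniform hyperbolicity (without the $\mathcal{T}$-symmetry), then absorbing the failure of the symmetry into a diagonal gauge transformation $\operatorname{diag}(f,f^{-1})$, whose coefficient $f$ is determined by a scalar coboundary equation on the circle.

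First, since $(\alpha,A)\in\mathcal{UH}(\SL(2,\C))$, the unstable and stable directions $u,s:\T\to \C P^1$ are analytic. By Theorem \ref{COMPLEX LIFT} I may lift them to analytic sections $v_u,v_s\in C^\omega(\T,\C^2)$; after rescaling so that $\det(v_u\mid v_s)\equiv 1$ I obtain $B:=(v_u\mid v_s)\in C^\omega(\T,\SL(2,\C))$ satisfying $B(x+\alpha)^{-1}A(x)B(x)=\operatorname{diag}(\lambda(x),\lambda(x)^{-1})$ for some $\lambda\in C^\omega(\T,\C^*)$. The identity $A(x+\tfrac12)=\mathcal{T}^{-1}A(x)\mathcal{T}$ together with $A_n(x+\tfrac12)=\mathcal{T}^{-1}A_n(x)\mathcal{T}$ forces $u(x+\tfrac12)=\mathcal{T}^{-1}u(x)$ and $s(x+\tfrac12)=\mathcal{T}^{-1}s(x)$ in $\C P^1$, so $B^*(x):=\mathcal{T}^{-1}B(x+\tfrac12)\mathcal{T}$ is another analytic diagonalizer of $A$. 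Comparing columns and using $\det B^*\equiv 1$ yields a unique $\beta\in C^\omega(\T,\C^*)$ with $B^*=B\cdot\operatorname{diag}(\beta,\beta^{-1})$, and the fact that $B\mapsto B^*$ is an involution (since $\mathcal{T}^2=I$ and $B$ is $1$-periodic) then forces the cocycle identity $\beta(x)\beta(x+\tfrac12)\equiv 1$. In particular $\operatorname{wind}(\beta)=0$, so $\beta=e^{h}$ for some $h\in C^\omega(\T,\C)$ with $h(x)+h(x+\tfrac12)=2\pi i\,m$ for a single $m\in\Z$, well-defined modulo $2$.

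Next, I would seek $\tilde B$ in the form $B\cdot\operatorname{diag}(f,f^{-1})$; a short computation using that diagonal matrices commute with $\mathcal T$ shows that $\tilde B=\tilde B^*$ reduces to the scalar coboundary equation
\[
f(x+\tfrac12)\,\beta(x)=f(x).
\]
With the ansatz $f(x)=e^{-2\pi i m x}\,e^{\psi(x)}$, $\psi\in C^\omega(\T,\C)$, this becomes
\[
\psi(x+\tfrac12)-\psi(x)=-h(x)+i\pi m,
\]
whose right-hand side $G(x)$ satisfies $G(x)+G(x+\tfrac12)=0$ by the very defining relation of $m$; hence $G$ has only odd Fourier modes and the equation is uniquely solvable mode-by-mode (setting the even modes of $\psi$ to zero), with analyticity of $\psi$ on the same strip as $h$ inherited from the exponential decay of Fourier coefficients. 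Taking $\tilde\lambda(x):=\lambda(x)\,f(x)/f(x+\alpha)\in C^\omega(\T,\C^*)$ then yields \eqref{conju}, and \eqref{B(x)1} holds by the construction $\tilde B=\tilde B^*$.

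The main technical subtlety lies in the coboundary step: the naive ansatz $f=e^\psi$ with $\psi$ analytic $1$-periodic succeeds only when $m$ is even; when $m$ is odd one cannot avoid the monodromy factor $e^{-2\pi i m x}$, which is exactly what trivializes the $\Z/2$-cocycle class represented by $\beta$ on $\T$. This mod-$2$ invariant $m$ is the only topological obstruction in the whole argument, and inserting the correct power of $e^{-2\pi i x}$ into $f$ removes it once and for all.
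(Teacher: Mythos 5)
Your proof is correct and follows essentially the same route as the paper: construct a determinant-normalized analytic diagonalizing frame $B$, derive the scalar obstruction $\beta$ (the paper's $-\zeta$) satisfying $\beta(x)\beta(x+\tfrac12)=1$, and solve the half-coboundary $f(x)/f(x+\tfrac12)=\beta(x)$ to correct the gauge by a diagonal factor $\operatorname{diag}(f,f^{-1})$. The only cosmetic difference is in how the coboundary is solved: you use Fourier analysis organized around the mod-$2$ winding invariant $m$, whereas the paper writes $\zeta=|\zeta|e^{2\pi i\theta}$ and constructs the solution explicitly via the integer $K$ (with $\theta(x)+\theta(x+\tfrac12)=K$), the two being equivalent with $m=K+1$ modulo $2$.
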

    
\begin{proof}
    By Theorem \ref{COMPLEX LIFT} we can take $u,s \in C^\omega(\T,\C^2)$  with    \begin{equation}\label{us}u(x)^{\top}J^{\top} s(x)=1, \qquad x\in \T,
    \end{equation} such that 
\begin{equation}\label{A(x)u(x)}        A(x)u(x)=\lambda(x)u(x+\alpha), \quad A(x)s(x)=\lambda(x)^{-1}s(x+\alpha).
\end{equation}
Combining \eqref{A(x)u(x)} and the fact $A(x+\frac{1}{2})=\mathcal{T}^{-1}A(x)\mathcal{T}$ we get 
$$
A(x)\mathcal{T}u(x+\frac{1}{2})=\lambda(x+\frac{1}{2}) \mathcal{T} u(x+\frac{1}{2}+\alpha),
$$
$$
A(x)\mathcal{T}s(x+\frac{1}{2})=\lambda(x+\frac{1}{2})^{-1}\mathcal{T} s(x+\frac{1}{2}+\alpha).
$$
By the uniqueness of the stable and unstable direction,  there exists $\zeta, \eta \in C^\omega(\T, \C)$ such that
\begin{equation}\label{U ANTISELF}
    \mathcal{T}u(x+\frac{1}{2})=\zeta(x)u(x), \qquad \mathcal{T}s(x+\frac{1}{2})=\eta(x)s(x).
\end{equation}
 Employing equation \eqref{us}, we deduce that $(\mathcal{T}u(x))^{\top}J^{\top}\mathcal{T}s(x)=-1$, which in turn implies $\eta(x)=-\zeta(x)^{-1}$. 
This just means if we take  $B(x)=\begin{pmatrix}
u(x), & s(x)    
\end{pmatrix}$, then it holds that 
\begin{eqnarray}\label{con1}
    \mathcal{T}B(x+\frac{1}{2})&=&B(x)\operatorname{diag}(\zeta(x), -\zeta(x)^{-1}),\\
  \label{con2}  B(x+\alpha)^{-1} A(x) B(x)&=&\left(\begin{array}{cc}
\lambda(x) & 0 \\
0 & \lambda(x)^{-1}
\end{array}\right)
    \end{eqnarray}
    The pivotal observation is as follows:
\begin{Claim} There exists a function $\xi \in C^\omega(\T,\C- \{0\})$ such that $\xi(x)\xi(x+\frac{1}{2})^{-1}=-\zeta(x)$. \end{Claim}
\begin{proof} Given that $u(x+1)=u(x)$, equation \eqref{U ANTISELF} implies $\mathcal{T}u(x)=\zeta(x+\frac{1}{2})u(x+\frac{1}{2})$, which further implies that \begin{equation}\label{ze1}
\zeta(x)\zeta(x+\frac{1}{2})=1.
\end{equation}
Since $\zeta \in C^\omega(\T, \C)$, there exists a function $\theta \in C^\omega(\R, \R)$ with the property that $\theta(x+1)-\theta(x) \in \mathbb{Z}$ for all $x \in \R$, such that $\zeta(x)=\left|\zeta(x)\right|e^{2\pi \operatorname{i}\theta(x)}$. From \eqref{ze1}, there exists a $K \in \mathbb{Z}$ for which $\theta(x+\frac{1}{2})+\theta(x)=K$ for all $x \in \R$. Let $\varphi(x)=\frac{1}{2}\theta(x)-(K+1)x$, then $\varphi(x)-\varphi(x+\frac{1}{2})=\theta(x)+\frac{1}{2}$, thus $\xi(x)=\left|\zeta(x)\right|^{\frac{1}{2}}e^{2\pi i\varphi(x)}$ satisfies our needs. \end{proof}

Once we have this, by noting \eqref{con1} and \eqref{con2}, then  $\tilde{B}(x)=B(x)\operatorname{diag}(\xi(x),\xi (x)^{-1 })$ will satisfy \eqref{B(x)1} and \eqref{conju}.
\end{proof}

By Theorem \ref{HOLOMORPHIC SECTION} and the fact that the coefficients of derivative of the Lyapunov exponent only depend on $(\alpha,A)$, we can find $q_1,q_2,q_3 \in C^\omega(\R_{\epsilon}\cap\mathbb{H}, \C)$ such that $q_1(\cdot+iy),q_2(\cdot+iy),q_3(\cdot+iy)$ is the derivative of the Lyapunov exponent of $(\alpha, A_{iy} e^{tw_{iy}})$. Since $A \in \mathscr{A}$, for any $y>0$ small enough it holds that $A_{iy}(x+\frac{1}{2})=\mathcal{T}^{-1}A_{iy}(x)\mathcal{T}$. If we 
write $B$ as in \eqref{B(x)}, then as consequence of \eqref{B(x)1}, we have
$$a(x+\frac{1}{2})=a(x), \quad b(x+\frac{1}{2})=-b(x),\quad
c(x+\frac{1}{2})=-c(x),\quad
d(x+\frac{1}{2})=d(x).
$$
which directly imply 
  \begin{equation}\label{q_1(x)1}
    q_1(x+iy+\frac{1}{2})=q_1(x+iy),\quad q_2(x+iy+\frac{1}{2})=-q_2(x+iy),\quad q_3(x+iy+\frac{1}{2})=-q_3(x+iy).
\end{equation}

By Lemma \ref{LEMMA9}, for any $w_1 \in C^\omega(\T,\R)$ and $w_2, w_3$ in $C^\omega(\T,\R)$ with the property $w_2(x+\frac{1}{2})=-w_2(x+\frac{1}{2}), w_3(x+\frac{1}{2})=-w_3(x+\frac{1}{2})$ we have the following identity:
$$\Re \int_{\T} \sum_{j=1}^3 q_i(x+iy) w_j(x+iy) d x=0.$$ First we take $w_1$ and $w_2$ to be identically-zero and then $w_1$ and $w_3$ to be identically-zero, we see that for any $w\in C^\omega(\T,\R)$ with the property $w(x+\frac{1}{2})=-w(x+\frac{1}{2}).$
\begin{equation}\label{Re}
    \Re \int_{\T}  q_j(x+iy) w(x+iy) d x=0, \quad j=2,3.
\end{equation}

Notice that $q_j$ is not defined on $\R$, but the number  
$$\hat{q_j}(k)=\int_0^1q(x+iy)e^{2k\pi i(x+iy)}$$
is well-defined for any $y \in (0,\epsilon)$, independent of the choice of $y$, and we still call it the $k$-th Fourier coefficient of $q_j \quad (j=2,3)$ . By \eqref{q_1(x)1} we see that  $\hat{q}(2k)=0$ for any $k \in \mathbb{Z}$.
In \eqref{Re}, take $w$ to be the form $a \cos (2k+1) \pi  x+b \sin (2k+1) \pi k x$, $a,b
\in \R$, $k \in \Z$, we see that the $\hat{q}(2k+1)=-\overline{\hat{q}(-2k-1)}$ for any $k \in \mathbb{N}_+$.  Since the Fourier series 
$\sum_{k\in \Z}\hat{q}(k)e^{2k\pi iz}$ converges whenever $0<\Im
z<\epsilon_0$, this implies that it actually converges whenever $0<\Im
z<\epsilon_0$, and at $\R$ it defines a purely imaginary function. Thus $q_j(x)$ extends analytically through $\R$ for $j=2,3$. 

The rest of argument is the same as \cite{Global}. Identifying $\mathbb{P} \C^2$ with the Riemann sphere in the usual way (the line
through $\begin{pmatrix}
    z\\w
\end{pmatrix}$ corresponding to $z/w$), and let $E^u(x)$, $E^s(x)$ be identified with $\phi_u(x)$, $\phi_s(x)$ under this identification. we get
$q_2=\frac {1} {\phi_u-\phi_s}$ and $q_3=\frac {\phi_u\phi_s} {\phi_u-\phi_s}$. These formulas allow us
to analytically continuate $\phi_u$ and $\phi_s$ through $\Im x=0$.  Since $q_2$ and
$q_3$ are purely imaginary at $\Im x=0$, we conclude that $\phi_u$ and $\phi_s$ are
complex conjugate directions in $\mathbb{P} \C^2$, and since they are distinct they
are also non-real. Consequently, we can find $T \in C^\omega(\T,\SL(2,\R))$ and $R \in C^\omega(\T,\SO(2,\R))$ such that $T(x+\alpha)^{-1}A(x)T(x)=R(x)$. Therefore, $\omega(A)>0$ implies  that $\omega(R)>0$ and then $\operatorname{deg}(R)>0$, which contradict to Lemma \ref{DEGREE=0}.
\end{proof}

Now we can finish the whole proof of Theorem \ref{SIMPLE DENSITY}. 
\begin{proof}
By Theorem \ref{POSITIVE DENSITY}, we can choose $A \in C^\omega(\mathbb{T},\Sp(4,\R))$ with $L_1(A)>0$ and $\omega^1(A)\omega^2(A)=0$. We can then divide the proof into four cases:

\smallskip \textbf{Case I: $\omega_1(A)=0$ and $L_1(A)>L_2(A)=0$.}

By Theorem \ref{DOMINATE=REGULAR}, $(\alpha,A)$ is $1,3-$dominated. Proposition \ref{DIAGNALIZATION LEMMA} implies the existence of $T \in C^\omega(\chi\T,\Sp(2d,\R))$, $\lambda \in C^\omega(\T,\R_+)$, and $\Gamma \in C^\omega(\chi\T,\SL(2,\R))$ such that $$T(x+\alpha)^{-1}A(x)T(x)=\operatorname{diag}(\lambda(x),\lambda(x)^{-1})
\diamond \Gamma(x).$$
Using Theorem \ref{JMD POSITIVE DENSITY}, we can find $\tilde{\Gamma}$ close to $\Gamma$ with $L(\tilde{\Gamma})>0$. Define $$\tilde{A}(x)=T(x+\alpha)\operatorname{diag}(\lambda(x),\lambda(x)^{-1})
\diamond \tilde{\Gamma}(x)T(x)^{-1}$$
Then $\tilde{A} \in C^\omega(\T, \Sp(4,\R))$ is close to $A$ and satisfies $L_1(\tilde{A})>L_2(\tilde{A})>0$.

\smallskip \textbf{Case II: $\omega_1(A)=0$ and $L_1(A)=L_2(A)>0$.}

Since $L_1(A(\cdot +it))\geq L_2(A(\cdot +it))$ for all $t$ in a neighborhood of $0$, and $L_1(A)=L_2(A)>0$, it follows that $\omega_2(A)\leq \omega_1(A)=0$. Thus, $\omega^2(A)=\omega_2(A)+\omega_1(A)\leq0$. However, $L^2(A(\cdot +it))$ is an even convex function with respect to $t$, implying $\omega^2(A)\geq 0$. Hence, $\omega^2(A)=0$, and $A \in \mathcal{R}(\Sp(4,\R))$. Theorem \ref{WEAK SIMPLE DENSITY} can then be applied to complete the proof.

\smallskip \textbf{Case III: $\omega^2(A)=0$ and $L_1(A)>L_2(A)=0$.}

Since $L_2(A)=0$ necessitates $\omega_2(A)\geq 0$, we have $\omega^1(A)=\omega^2(A)-\omega_2(A) \leq 0$. As $A\in C^\omega(\T,\Sp(4,\R))$, it follows that $\omega^1(A) \geq 0$. Consequently, $\omega^1(A)=0$, and $A \in \mathcal{R}(\Sp(4,\R))$. Theorem \ref{WEAK SIMPLE DENSITY} can be applied again to finish the proof.

\smallskip \textbf{Case IV: $\omega^2(A)=0$ and $L_1(A)=L_2(A)>0$.}

By Theorem \ref{DOMINATE=REGULAR}, $A \in \mathcal{UH}(\Sp(4,\R))$. Proposition \ref{DIAGNALIZATION LEMMA} ensures the existence of $T \in C^\omega( \chi \T,\Sp(4,\R))$, $\lambda \in C^\omega(\T,\R_+)$, and $\Lambda \in C^\omega(\chi \T,\SL(2,\R))$, where $\chi=1$ or $2$, such that $$T(x+\alpha)^{-1}A(x)T(x)=\begin{pmatrix}
\lambda(x)\Lambda(x) & O \\
O & \lambda(x)^{-1}\Lambda(x)^{-\top}
\end{pmatrix}.$$

If $\tau(E^u(A))=\tau(E^s(A))=1$, then $\chi=1$. Theorem \ref{JMD POSITIVE DENSITY} and Theorem \ref{CONTIOUS LYAPUNOV} allow us to choose $\tilde{\Lambda}$ close to $\Lambda$ with $0$. Define $$\tilde{A}(x)=T(x+\alpha)\operatorname{diag}(\lambda(x)\tilde{\Lambda}(x),\lambda(x)^{-1}\tilde{\Lambda}(x)^{-\top})T(x)^{-1}.$$
Then $\tilde{A} \in C^\omega(\T, \Sp(4,\R))$ is close to $A$ and satisfies $L_1(\tilde{A})>L_2(\tilde{A})>0$.

If $\tau(E^u(A))=\tau(E^s(A))=-1$, then $\chi=2$, and we have $T(x+1)=T(x)\mathcal{P}$ and $\Lambda \in \mathfrak{A}$. If $\omega(\Lambda)=0$, Proposition \ref{ANTIPERIODIC POSITIVE DENSE} enables us to select $\tilde{\Lambda}$ close to $\Lambda$ with $L(\tilde{\Lambda})>0$. Otherwise, if $\omega(\Lambda)>0$, Proposition \ref{RARECRITICAL} (equivalently, the fact that $\{\Lambda \in \mathfrak{A}: \omega(\Lambda)>0,L(\Lambda)=0\}$ has no interior in $ \mathfrak{A}$) ensures the existence of $\tilde{\Lambda}$ close to $\Lambda$ with $L(\tilde{\Lambda})>0$. Thus, $\tilde{A}$ as defined above is in $C^\omega(\T, \Sp(4,\R))$, close to $A$, and satisfies $L_1(\tilde{A})>L_2(\tilde{A})>0$.

  Finally, observe that the ensemble of $1$-regular or $2$-regular $\Sp(4,\mathbb{R})$-cocycles constitutes an open set within the domain of all analytic $\Sp(4,\mathbb{R})$-cocycles. Consequently, in the aforementioned four scenarios, we are able to select $\tilde{A}$ sufficiently close to $A$ such that $\omega^1(\tilde{A})\omega^2(\tilde{A})=0$. Thus, we have successfully concluded the proof.
\end{proof}

\subsection{Proof of Proposition \ref{ANTIPERIODIC POSITIVE DENSE-1}}


 Lemma \ref{DEGREE=0} implies that  $A \in \mathscr{A}$ is homotopic to the identity, then its fibered rotation number is well-defined. The key observation here is the following:
\begin{lemma}\label{zrho}
    For any $A \in \mathscr{A}$, we have $2\rho(\alpha,A)=0 \mod 1$. 
\end{lemma}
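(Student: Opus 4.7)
The plan is to exploit the defining symmetry $\Lambda(x+\tfrac12)=\mathcal{T}^{-1}\Lambda(x)\mathcal{T}$ together with two elementary invariance properties of the fibered rotation number, and then simply combine them.

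First, I would observe that translating the base parameter does not change the fibered rotation number: for $B(x):=A(x+\tfrac12)$ one has $B_n(x)=A_n(x+\tfrac12)$, so the induced projective dynamics on $\T\times\mathbb{S}^1$ is conjugated by the rigid shift $(x,v)\mapsto(x+\tfrac12,v)$, which preserves any lift up to an integer and hence preserves $\rho$ modulo $\Z$. Therefore
\[
\rho(\alpha,A(\cdot+\tfrac12))=\rho(\alpha,A)\mod 1.
\]

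Second, I would show that constant conjugation by $\mathcal{T}=\operatorname{diag}(-1,1)$ flips the fibered rotation number. Since $\det\mathcal{T}=-1$, the induced map on $\mathbb{P}\R^2\simeq\R/\pi\Z$ is orientation-reversing; parametrising $\mathbb{S}^1$ by $\theta\mapsto(\cos\theta,\sin\theta)^t$ one checks that the action is $\theta\mapsto -\theta$. Consequently, if $\widetilde F(x,t)=(x+\alpha,t+f(x,t))$ is a continuous lift of the projectivised cocycle of $A$, then $(x,t)\mapsto(x,-t)$ conjugates $\widetilde F$ to a lift of the projectivised cocycle of $\mathcal{T}^{-1}A\mathcal{T}$ whose fibrewise drift at $(x,t)$ equals $-f(x,-t)$. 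Averaging along orbits and using invariance of Birkhoff averages under the measurable change of variables yields
\[
\rho(\alpha,\mathcal{T}^{-1}A\mathcal{T})=-\rho(\alpha,A)\mod 1.
\]

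Putting the two identities together with the defining relation $A(x+\tfrac12)=\mathcal{T}^{-1}A(x)\mathcal{T}$ gives
\[
\rho(\alpha,A)=\rho(\alpha,A(\cdot+\tfrac12))=\rho(\alpha,\mathcal{T}^{-1}A\mathcal{T})=-\rho(\alpha,A)\mod 1,
\]
which is equivalent to $2\rho(\alpha,A)=0\mod 1$.

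The only step requiring any care is the orientation-reversing conjugation identity: one must pick compatible lifts and confirm that the integer ambiguity in the lift produces only an element of $\Z$, so that the sign flip is correct modulo $1$ (this is why the final statement is only modulo $\Z$, consistent with the anticipated two possibilities $\rho\in\{0,\tfrac12\}\bmod 1$ reflecting the antiperiodic structure). The base-translation invariance and the final chain of equalities are then purely formal.
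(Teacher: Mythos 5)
Your argument is correct, and it reaches the same identity as the paper but packages it more conceptually. The paper works directly with the lift $f$: it uses the relation $A(x+\tfrac12)=\mathcal{T}^{-1}A(x)\mathcal{T}$ together with the defining equation \eqref{ACTION} at the angle $y=\tfrac12-t$ to derive $f(x,t)+f(x+\tfrac12,\tfrac12-t)=K$ for a fixed integer $K$, then propagates this by induction on the Birkhoff sums $\psi_N$ to conclude $2\rho\equiv K\pmod 1$. You instead isolate two standard invariance facts for $\rho$ --- translation invariance $\rho(\alpha,A(\cdot+\tfrac12))=\rho(\alpha,A)$ and anti-invariance under conjugation by the orientation-reversing $\mathcal{T}$, namely $\rho(\alpha,\mathcal{T}^{-1}A\mathcal{T})=-\rho(\alpha,A)$ --- and then combine them using the defining symmetry of $\mathscr{A}$. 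These are the same two ingredients the paper merges into a single calculation, so mathematically the content coincides; your version has the merit of making the mechanism transparent and reusable, while the paper's has the merit of being fully self-contained (it explicitly exhibits the constant $K$ and the telescoping Birkhoff-sum identity, rather than invoking the two sub-lemmas). One small slip in your write-up: the lift of the projective action of $\mathcal{T}=\operatorname{diag}(-1,1)$ is $t\mapsto\tfrac12-t$ rather than $t\mapsto-t$ (it sends $\theta$ to $\pi-\theta$, not $-\theta$), but this does not affect the sign flip and hence does not affect the conclusion.
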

\begin{proof}
Denote the lift of $(\alpha, A)$ by
$\widetilde{F} : \mathbb T \times \mathbb R \rightarrow \mathbb T \times \mathbb R $, which is
of the form
$\tilde{F}(x,t) = (x + \alpha,t + f(x,t))$, such that 
\begin{equation}\label{ACTION}
    A(x)\binom{\cos (2 \pi y)}{\sin (2 \pi y)}=\lambda_A(x, y)\binom{\cos (2 \pi(y+f(x, y)))}{\sin (2 \pi(y+f(x, y)))}.
\end{equation}
Denote $\psi_N(x, t)=\sum_{n=0}^{N-1} f\left(\tilde{F}^n(x, t)\right)$, which satisfy 
\begin{equation}\label{pn}
    \psi_N(x,t+1)=\psi_N(x,t).
\end{equation}

Take $y=\frac{1}{2}-t$ in  \eqref{ACTION}, we have the following:
\begin{equation}\label{A(x+Alpha)}
    A(x+\frac{1}{2})\binom{\cos (2 \pi (\frac{1}{2}-t))}{\sin (2 \pi (\frac{1}{2}-t))}=\lambda_A(x+\frac{1}{2}, \frac{1}{2}-t)\binom{\cos (2 \pi(\frac{1}{2}-t+f(x+\frac{1}{2}, \frac{1}{2}-t)))}{\sin (2 \pi(\frac{1}{2}-t+f(x+\frac{1}{2}, \frac{1}{2}-t)))},
\end{equation}
Meanwhile, as $A \in \mathscr{A},$ by  \eqref{ACTION} we also have
\begin{equation}\label{A(x+Alpha)1}
    A(x+\frac{1}{2})\binom{\cos \left(2 \pi\left(\frac{1}{2}-t\right)\right)}{\sin \left(2 \pi\left(\frac{1}{2}-t\right)\right)}=\lambda_A(x, t)\binom{\cos \left(2 \pi\left(\frac{1}{2}-t-f(x, t)\right)\right.}{\sin \left(2 \pi\left(\frac{1}{2}-t-f(x, t)\right)\right.}.
\end{equation}
Thus \eqref{A(x+Alpha)} and \eqref{A(x+Alpha)1} imply that there exists $ k_{x t} \in \mathbb{Z}$ such that
$$
\frac{1}{2}-t-f(x, t)=\frac{1}{2}-t+f\left(x+\frac{1}{2}, \frac{1}{2}-t\right)+k _{x t}.
$$
By continuity, $k_{x t} \equiv K \in \Z$, implies that for any $x \in \T, t \in \R$, $$f(x, t)+f\left(x+\frac{1}{2}, \frac{1}{2}-t\right)=K.$$

Next, we prove $\psi_N(x, t)+\psi_N\left(x+\frac{1}{2}, \frac{1}{2}-t\right)=N K$  by  induction. Indeed, by \eqref{pn}, we have
$$
\begin{aligned}
& \psi_{N+1}(x, t)+\psi_{N+1}(x+\frac{1}{2}, \frac{1}{2}-t) \\
= & \psi_N(x+\alpha, t+f(x, t))+f(x, t)+\psi_N\left(x+\frac{1}{2}+\alpha, \frac{1}{2}-t+f(x+\frac{1}{2}, \frac{1}{2}-t)\right)+f(x+\frac{1}{2}, \frac{1}{2}-t) \\
= & \psi_N(x+\alpha, t+f(x, t))+\psi_N(x+\frac{1}{2}+\alpha, \frac{1}{2}-(t+f(x, t))+K)+K \\
= & \psi_N(x+\alpha, t+f(x, t))+\psi_N(x+\frac{1}{2}+\alpha, \frac{1}{2}-(t+f(x, t)))+K  \\
= & (N+1) K.
\end{aligned}
$$

This immediately implies that 
$$2\rho(\alpha,A)=\lim _{N \rightarrow +\infty} \frac{\psi_N(x, t)+\psi_N\left(x+\frac{1}{2}, \frac{1}{2}-t\right)}{N}=K\equiv0 \quad(\operatorname{mod} 1),$$
the result follows.
\end{proof}

\subsubsection{Case 1: $\beta(\alpha)=0$}

In the case $\beta(\alpha)=0$, by ART,  we have the following:

\begin{theorem}\cite{Aab,avila2}\label{DIOPHATINEARC}
     Let $\alpha \in \R\backslash\Q$ with $\beta(\alpha)=0$,  $A \in C^\omega(\T,\SL(2,\R))$, $k_0 \in \mathbb{Z}$. Suppose that $(\alpha,A)$ is subcritical
and $\mathbf{l}=2\rho(\alpha,A)-k_0 \alpha \in \mathbb{Z}.$  Then there exist $B \in C^\omega(\T,\PSL(2,\R)), d \in \R$ such that $\operatorname{deg} B=k_0$ and
$$(-1)^{\mathbf{l}}B(x+\alpha)^{-1}A(x)B(x)=I_2+d\mathcal{L}:=I_2+d\begin{pmatrix}
    0 & 1 \\
    0 & 0
\end{pmatrix} .$$
\end{theorem}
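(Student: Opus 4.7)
The plan is to combine Avila's Almost Reducibility Theorem (ART) with a rotation--number bookkeeping argument and a final KAM--type step that converges precisely because $\beta(\alpha)=0$. Since $(\alpha,A)$ is subcritical, ART produces a sequence $B_n\in C^\omega(\T,\PSL(2,\R))$ and constants $C_n\in\SL(2,\R)$ such that $B_n(\cdot+\alpha)^{-1}A(\cdot)B_n(\cdot)$ converges to $C_n$ on a common strip with super--exponential speed. Under $\PSL(2,\R)$-conjugation the fibered rotation number transforms by $2\rho(\alpha,B_n^{-1}(\cdot+\alpha)AB_n)\equiv 2\rho(\alpha,A)-(\deg B_n)\alpha\pmod{\Z}$, so choosing $\deg B_n=k_0$ forces $2\rho(\alpha,C_n)\equiv\mathbf{l}\pmod{\Z}$; hence $C_n$ has rotation number $0$ or $1/2$ according to the parity of $\mathbf{l}$.

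Next I would absorb the sign: multiplying $A$ by $(-1)^{\mathbf{l}}$ shifts the rotation number by $\mathbf{l}/2$, so that $(-1)^{\mathbf{l}}C_n$ has rotation number $0$. An element of $\SL(2,\R)$ with rotation number $0$ is either the identity, parabolic, or hyperbolic; but the limit cocycle is still subcritical and has zero Lyapunov exponent on a strip, which excludes a hyperbolic conjugacy class. After a bounded $\PSL(2,\R)$-conjugation of degree zero (which can be absorbed into $B_n$ without altering $\deg B_n=k_0$), we may take $(-1)^{\mathbf{l}}C_n=I_2+d_n\mathcal{L}$ with $d_n\in\R$. At this point $(\alpha,(-1)^{\mathbf{l}}A)$ is almost reducible with super--exponentially small error to the parabolic family $I_2+d_n\mathcal{L}$, the conjugations carrying total degree $k_0$.

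Finally I would run a KAM iteration to upgrade almost reducibility to genuine reducibility. The small divisors at the $n$-th KAM step are of order $\|q_n\alpha\|_{\R/\Z}^{-1}\sim q_{n+1}$, so the assumption $\beta(\alpha)=\limsup q_n^{-1}\ln q_{n+1}=0$ keeps them sub--exponential in $q_n$; combined with the super--exponential ART estimate on the errors, this yields convergence of the iteration in $C^\omega(\T,\PSL(2,\R))$. The limit is a single $B\in C^\omega(\T,\PSL(2,\R))$ with $\deg B=k_0$ conjugating $(-1)^{\mathbf{l}}A$ to $I_2+d\mathcal{L}$, which is the required identity.

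The main obstacle will be making the KAM step work in this doubly degenerate setting. The classical Dinaburg--Sinai and Eliasson schemes treat elliptic target constants under rotation numbers Diophantine with respect to $\alpha$; here the target is parabolic and the rotation number is resonant ($2\rho\equiv k_0\alpha\pmod{\Z}$), and $\alpha$ is only assumed to satisfy $\beta(\alpha)=0$, so it may be Liouvillean. Closing the scheme requires exploiting the quantitative ART estimates of \cite{Aab,avila2} so that the super--exponential errors dominate the sub--exponential divisor losses, together with a careful projection at each step that keeps the nearly--constant part within the parabolic stratum (preventing drift into elliptic rotations, which would obstruct termination in the form $I_2+d\mathcal{L}$).
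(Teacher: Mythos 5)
This theorem is stated in the paper with the citation \cite{Aab,avila2}; the paper itself provides no proof of it, treating it as a known external result. So there is no internal proof to compare your proposal against.

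That said, here is an assessment of your sketch on its own terms. The high-level scaffolding you lay out --- invoke ART to produce conjugations $B_n$ of prescribed degree $k_0$ bringing $A$ close to constants $C_n$, use the transformation law $2\rho(\alpha,B_n^{-1}(\cdot+\alpha)AB_n) \equiv 2\rho(\alpha,A) - (\deg B_n)\alpha \pmod \Z$ to pin down $2\rho(\alpha,C_n)\equiv\mathbf{l}\pmod\Z$, absorb the sign $(-1)^{\mathbf{l}}$, rule out hyperbolic limits via $L=0$, and then close a KAM-type iteration using $\beta(\alpha)=0$ --- is indeed the correct skeleton of the argument in Avila's work. You have also correctly identified the crux: the passage from almost reducibility (a sequence of conjugations with errors going to zero) to genuine reducibility (a single convergent conjugation) at a resonant rotation number $2\rho\equiv k_0\alpha\pmod\Z$. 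However, your sketch acknowledges this obstacle without actually closing it. The step "after a bounded $\PSL(2,\R)$-conjugation of degree zero we may take $(-1)^{\mathbf{l}}C_n = I_2 + d_n\mathcal{L}$" is not automatic: each $C_n$ only has rotation number close to $\mathbf{l}/2$, so the $C_n$ may be slightly elliptic or slightly hyperbolic, and the normalizing conjugations needed to put them in parabolic normal form can blow up as $C_n$ approaches the parabolic stratum. Controlling this blowup against the super-exponential ART errors, and then showing the composed conjugations converge, is precisely the content of the quantitative estimates in \cite{Aab,avila2} (and of the same type as \eqref{ESTIMATES OF DBF} in the Liouvillean analogue, Theorem \ref{LIOVILLEANARC}, quoted later in the paper). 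Your proposal points at this work but does not supply it, so it remains a strategy outline rather than a proof.
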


The following elementary observation is useful for us:

\begin{lemma}\label{degree}
    For any $B \in C(\T,\PSL(2,\R))$, we have the following:
    \begin{itemize}
    \item $B \in C(\T,SL(2,\R))$ if and only if $\deg B$ is even.
    \item $B \in C(2\T,SL(2,\R))- C(\T,SL(2,\R))$ if and only if $\deg B$ is odd.
    \end{itemize}
\end{lemma}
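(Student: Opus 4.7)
The plan is to invoke the classical covering-space lifting criterion for the double cover $\pi: \SL(2,\R)\to\PSL(2,\R)$. Both Lie groups are homotopy equivalent to $S^1$ via the retraction onto their maximal compact subgroups $\SO(2)$ and $\SO(2)/\{\pm I\}$ respectively, so $\pi_1(\SL(2,\R))\cong \pi_1(\PSL(2,\R))\cong\Z$. The induced map $\pi_\ast$ between these is multiplication by $2$: on maximal compacts it restricts to the quotient $\SO(2)\to\SO(2)/\{\pm I\}$, which in circle coordinates reads as the squaring map $S^1\to S^1$ and so has degree $2$ on $\pi_1$. The degree $\deg B$ of a continuous loop $B\in C(\T,\PSL(2,\R))$ is, by definition, the image of the positive generator of $\pi_1(\T)$ under $B_\ast$.

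For the first equivalence, the lifting criterion asserts that $B: \T \to \PSL(2,\R)$ admits a continuous lift $\tilde B : \T\to \SL(2,\R)$ if and only if $B_\ast(\pi_1(\T))\subset \pi_\ast(\pi_1(\SL(2,\R)))=2\Z$, which says precisely that $\deg B$ is even. Since writing $B\in C(\T,\SL(2,\R))$ is tautologically the choice of such a continuous lift through the covering $\pi$ (the kernel $\{\pm I\}$ being discrete), this proves the first bullet.

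For the second bullet, let $p: 2\T\to\T$ be the standard double cover $x\mapsto x\bmod 1$; the induced map $p_\ast$ on $\pi_1$ is multiplication by $2$. If $\deg B$ is odd, then $\deg(B\circ p)=2\deg B$ is even, so by the first bullet $B\circ p$ lifts, meaning $B\in C(2\T,\SL(2,\R))$, while $B\notin C(\T,\SL(2,\R))$ by the first bullet applied on $\T$. Conversely, $B\in C(2\T,\SL(2,\R))\setminus C(\T,\SL(2,\R))$ forces $\deg B$ odd by negating the first bullet. The only technical ingredient is the computation that $\pi_\ast$ is multiplication by $2$, which is routine as sketched above; no analytic input beyond this standard topological fact is required, so I do not expect any serious obstacle.
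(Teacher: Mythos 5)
Your proof is correct and rests on the same topological fact as the paper's argument: the double cover $\pi:\SL(2,\R)\to\PSL(2,\R)$ induces multiplication by $2$ on $\pi_1\cong\Z$. The paper proceeds concretely by homotoping a lifted $B\in C(2\T,\SL(2,\R))$ to the rotation normal form $R_{\frac{d}{2}x}$ and reading off the periodicity condition $B(x+1)=\pm B(x)$ from that model; you instead invoke the abstract lifting criterion (a loop lifts through a covering iff its class lies in the image of $\pi_*$), identifying $\pi_*$ via the inclusion of the maximal compacts $\SO(2)\hookrightarrow\SL(2,\R)$ and the squaring map $\SO(2)\to\SO(2)/\{\pm I\}$. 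The two routes are equivalent in substance; yours is cleaner in that it avoids any appeal to the homotopy extension of the periodicity condition, while the paper's is more self-contained for readers who prefer to see the explicit representative. Both are fine; no gap.
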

\begin{remark}
    This result was first proved in \cite{Ge-Jito-You}, we give another short proof here. 
\end{remark}
\begin{proof}
For any $B \in C(2\T,SL(2,\R))$, $B(x)$ is homotopic to the rotation matrix $R_{\frac{d}{2}x}$, with $d:=\deg B$ representing the topological degree of the map $B(\cdot)$. Specifically, if $B(x+1) = B(x)$, it implies that $R_{\frac{d}{2}(x+1)} = R_{\frac{d}{2}x}$, which necessitates that $d$ is an even number. Meanwhile, if $B(x+1) = -B(x)$, then $R_{\frac{d}{2}(x+1)} = -R_{\frac{d}{2}x}$, indicating that $d$ is odd. Given that $\PSL(2,\R) = \SL(2,\R)/\{\pm \id\}$, the conclusion that the degree $d$ is either even or odd, depending on the condition $B(x+1) = \pm B(x)$, is thus established.
\end{proof}

\begin{proof}
    By Theorem \ref{DIOPHATINEARC},  we have 
    \begin{equation}\label{re1}
    B(x+\alpha)^{-1}A(x)B(x)=I_2+d\mathcal{L}.\end{equation}
To prove Proposition \ref{ANTIPERIODIC POSITIVE DENSE}, the difficulty lies in the conjugacy $B(x)$ not necessary belongs to $\mathscr{A}$, thus one can't easily perturbate $A(x)$ to $\tilde{A}(\cdot) \in \mathscr{A}$ having positive Lyapunov exponent. Note by Lemma \ref{zrho}, $2\rho(\alpha,A)=0 \mod 1$, which gives $k_0=0$, then $\deg B=0$, by Lemma \ref{degree}, we have $B \in C^{\omega}(\T,SL(2,\R))$.
The key observation here is that even if  $B(x)$ doesn't  belong to $\mathscr{A}$, there exists 
constant $ T \in \SL(2,\R)$ such that $B(x)T \in \mathscr{A}.$ To prove this, we set 
\begin{equation}\label{DEF OF S} 
    S(x)=B(x)^{-1}\mathcal{T}B(x+\frac{1}{2}),
\end{equation}
     then $S \in C^\omega(\T,\GL(2,\R))$ with $\operatorname{det}S(x)=-1$ for any $x \in \T$. Consequently,
     combining  \eqref{DEF OF S} the fact that $A \in \mathscr{A}$,
     we have 
     \begin{eqnarray}\label{xal1}
     \mathcal{T} B\left(x+\frac{1}{2}+\alpha\right)&=&B(x+\alpha) S(x+\alpha),\\
    \label{xal2} A(x)B(x)S(x)=A(x)\mathcal{T}B(x+\frac{1}{2})&=& \mathcal{T}A(x+\frac{1}{2}) B(x+\frac{1}{2}) 
\end{eqnarray}
We then distinguish the proof into two cases: 

\smallskip
\textbf{Case I: $d=0.$} In this scenario, by \eqref{re1} and \eqref{xal2}, we have 
$$
\mathcal{T} B\left(x+\frac{1}{2}+\alpha\right)=B(x+\alpha) S(x).$$ 
Combine this with \eqref{xal1}, one gets the conclusion that $S(x+\alpha)=S(x)$, a fact that, combined with ergodicity, reveals the existence of a matrix $S \in \GL(2,\R)$ for which $S(x)\equiv 
S$ across all $x \in \T$. Moreover, by \eqref{DEF OF S}, we have $$
\mathcal{T}B(x+\frac{1}{2})=B(x) S(x)=\mathcal{T}^{-1} B(x-\frac{1}{2}) S(x-\frac{1}{2}) S(x)= \mathcal{T} B(x+\frac{1}{2}) S(x-\frac{1}{2}) S(x)
,$$
which implies that 
$S(x-\frac{1}{2}) S(x)=I_2$, this further implies that $S^2=I_2$, and because $\operatorname{det}S=-1$, there exists a matrix $ T \in \SL(2,\R)$ such that $T^{-1}ST=\mathcal{T}$.

Thus if we take $\Gamma(x)=B(x)T$, then by \eqref{DEF OF S},  $\Gamma(x+\frac{1}{2})=\mathcal{T}^{-1}\Gamma(x)\mathcal{T}$
which just means $\Gamma(x) \in \mathscr{A}.$ Meanwhile, by \eqref{re1}, we have 
 $\Gamma(x+\alpha)^{-1}A(x)\Gamma(x)=I_2,$  thus 
we define a sequence of matrices $\{A^{(n)}\}$ as follows: $$A^{(n)}(x)=\Gamma(x+\alpha)\operatorname{diag}(e^{\frac{1}{n}}, e^{-\frac{1}{n}})\Gamma(x)^{-1},$$ which converges to $A$ in the $C^\omega$-topology. Each $A^{(n)}$ satisfies $L(A^{(n)})>0$ and, importantly, $A^{(n)} \in \mathscr{A}$ as $\Gamma(x) \in \mathscr{A}.$ By upper-semicontinuity of the acceleration, $\omega(A^{(n)})=0$ for sufficiently large $n$.

\smallskip
\textbf{Case II: $d\neq 0$. } Similar argument as the first case, by \eqref{re1} and \eqref{xal2}, we have  $$B(x+\alpha)(I_2+d\mathcal{L}) S(x)=\mathcal{T} B(x+\frac{1}{2}+\alpha)(I_2+d\mathcal{L}).$$ Combining this with equation \eqref{xal1} leads to the conclusion that
$$S(x+\alpha)(I_2+d\mathcal{L})=(I_2+d\mathcal{L})S(x).$$
Thus if we denote $S(x)=\begin{pmatrix}
    s_1(x) & s_2(x) \\
    s_3(x) & s_4(x) \\
\end{pmatrix}$, we get
\begin{equation}\label{S1}
    s_1(x+\alpha)-s_1(x)=ds_3(x), 
\end{equation}
\begin{equation}\label{S2}
    s_2(x+\alpha)-s_2(x)=d s_4(x)-d s_1(x+\alpha),
\end{equation}
\begin{equation}\label{S3}
   s_3(x+\alpha)-s_3(x)=0,
\end{equation}
\begin{equation}\label{S4}
   s_4(x+\alpha)-s_4(x)=-d s_3(x+\alpha).
\end{equation}
By \eqref{S3} and ergodicity, it follows that $s_3(x)$ is a constant. By \eqref{S1}, we deduce that $s_3(x)\equiv 0$. Similarly, by ergodicity,  \eqref{S1} and  \eqref{S4} imply there exist two constants $s_1,s_4 \in \R$ such that $s_1(x)\equiv s_1$  and $s_4(x)\equiv s_4$.  Upon integrating both sides of \eqref{S2}, we find that $s_1=s_4$. Consequently, $\operatorname{det}S(x)=s_1s_4>0$, a contradiction.

\end{proof}

\subsubsection{Case 2: $\beta(\alpha)>0$}
In the case $\beta(\alpha)>0$, then qualitative result as Theorem \ref{DIOPHATINEARC} is not sufficient for us, then quantitative estimates come into play.

Suppose that $(\alpha,A)$ is subcritical in the strip $|\Im x|< h_*$,
there exists $\delta^{\prime}>0$, a subsequence $\left\{q_n\right\}$ consisting of the denominators of the continued fraction expansion of $\alpha$, with $q_{n+1}>e^{(\beta(\alpha)-o(1)) q_n}$, and 
\begin{equation}\label{EPSILON,DELTA,HN}
    \varepsilon_n=2 e^{\frac{2 \pi h *}{1+\alpha}} e^{-\frac{1}{2} \delta^{\prime} q_n}, \quad \delta_n=\frac{1}{2^n} h_*, \quad h=\frac{1}{2(1+\alpha)} h_*,
\end{equation}
such that the following result holds:
\begin{theorem}\cite{ARC}\label{LIOVILLEANARC}
 Let $\alpha \in \R\backslash\Q$ with $\beta(\alpha)>0$, $k_0 \in \mathbb{Z}$  and suppose that $(\alpha,A)$ is subcritical
and $\mathbf{l}=2\rho(\alpha,A)-k_0 \alpha \in \mathbb{Z}.$
Then there exists  $d_n \in \mathbb{R}$,  $B_n \in C_{h}^\omega(\T, \PSL(2, \R))$ with $\operatorname{deg} B_n=k_0$  such that
$$(-1)^{\mathbf{l}}B_n(x+\alpha)^{-1} A(x) B_n(x)=I_2+d_n\mathcal{L}+F_n(x),$$
with estimates
\begin{equation}\label{ESTIMATES OF DBF}
    \left|d_n\right| \leqslant e^{-q_{n+1} \varepsilon_n^{\frac{1}{4}}}, \left\|B_n\right\|_{h} \leqslant e^{2 q_{n+1}\varepsilon_n^{\frac{1}{4}}},\left\|F_n\right\|_{h} \leqslant e^{-q_{n+1} \delta_n}.
\end{equation}
\end{theorem}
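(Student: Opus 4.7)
The plan is to combine Avila's Almost Reducibility Theorem (ART) for subcritical cocycles with a quantitative KAM iteration indexed by the denominators $q_n$ of the continued fraction expansion of $\alpha$. Since $(\alpha,A)$ is subcritical in the strip $|\Im x|<h_*$, ART provides analytic conjugacies $\tilde B_n$ and constants $A^*_n \in \SL(2,\R)$ with $\tilde B_n(x+\alpha)^{-1}A(x)\tilde B_n(x) = A^*_n + o(1)$ on a strip of positive width. I would exploit $\mathbf{l} = 2\rho(\alpha,A) - k_0\alpha \in \Z$ to pin the rotation number: twisting $\tilde B_n$ by a rotation of topological degree $k_0$ (which forces passage to $\PSL(2,\R)$ when $k_0$ is odd) produces $B_n$ with $\deg B_n = k_0$ and the conjugated rotation number equal to $\mathbf{l}/2 \pmod 1$. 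Since this is a half-integer, the only possible constants $A^*_n$ up to small perturbation are $\pm I$ with sign $(-1)^{\mathbf{l}}$, and the nearby analytic dynamics fixes the parabolic normal form $(-1)^{\mathbf{l}}(I_2 + d_n\mathcal{L})$.

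Next I would install the quantitative KAM iteration. Small divisors $e^{2\pi i k\alpha}-1$ are dangerous exactly for $k$ comparable to some $q_j$, $j\le n$, by the bounds $\|q_n\alpha\|_{\R/\Z} \in [1/(2q_{n+1}), 1/q_{n+1}]$. At step $n$, one truncates the Fourier expansion at mode $N_n \sim q_n\log q_{n+1}$, solves the cohomological equation away from the resonance to obtain a conjugacy of norm $\le e^{q_{n+1}\varepsilon_n^{1/4}}$ via Cauchy estimates on the subcritical strip, and absorbs the lone resonant harmonic into the parabolic parameter $d_n$, yielding $|d_n| \le e^{-q_{n+1}\varepsilon_n^{1/4}}$. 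The Fourier tail produces $F_n$ with $\|F_n\|_h \le e^{-q_{n+1}\delta_n}$ by summing the exponential tail on a strip of width $h$. Here the subcriticality of $A$ is essential: it implies $L(A(\cdot+iy)) \equiv 0$ on a neighborhood of $0$, which via a compactness argument prevents the classical KAM norm explosion and lets the conclusion be stated on the fixed strip $h$ rather than on a shrinking one.

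The main obstacle will be the simultaneous control of three competing scales: the strip budget $\delta_n = 2^{-n}h_*$ must not be exhausted, the conjugacy growth $e^{2q_{n+1}\varepsilon_n^{1/4}}$ must remain dominated by the error decay $e^{-q_{n+1}\delta_n}$, and $d_n$ must decay doubly exponentially. The saving mechanism is that $\varepsilon_n$ is super-exponentially small in $q_n$ while $\delta_n$ is only exponentially small, so $\delta_n \gg \varepsilon_n^{1/4}$, and the assumption $q_{n+1} > e^{(\beta(\alpha)-o(1))q_n}$ guarantees the iteration can be continued under the Liouvillean frequency. A delicate point is preserving $\deg B_n = k_0$ throughout the iteration, since corrective factors could shift the degree by an even integer; one must consistently choose the branch compatible with the initial topological class. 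Identifying the parabolic direction as $\mathcal{L}$ rather than $\mathcal{L}^{\top}$ is pinned by the sign convention in the rotation number lift, which must be tracked explicitly at each resonant step.
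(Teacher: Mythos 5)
This statement is cited from Avila--You--Zhou \cite{ARC}, not proved in the paper: the authors fix the parameters $\varepsilon_n$, $\delta_n$, $h$ in \eqref{EPSILON,DELTA,HN} and then invoke Theorem~\ref{LIOVILLEANARC} as a black box to run the argument for Proposition~\ref{ANTIPERIODIC POSITIVE DENSE-1} in the Liouvillean case. There is therefore no internal proof to compare against, and you have set yourself the task of reconstructing a result from a different reference.

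As a free-standing sketch, your proposal captures the correct coarse structure of quantitative almost reducibility (enter via ART, use the identity $2\rho(\alpha,A)=k_0\alpha+\mathbf{l}$ to force the conjugated rotation number to $\mathbf{l}/2$ and hence the limiting constant to $\pm I$, iterate a resonance-adapted KAM scheme indexed by the $q_n$, solve the cohomological equation off the resonant mode and absorb the resonant Fourier coefficient into the parabolic parameter $d_n$). Two points deserve more care, though. First, the specific shape of the three estimates in \eqref{ESTIMATES OF DBF} is not a cosmetic matter: the downstream argument (estimates \eqref{SIZE OF S}, \eqref{s3ESTIMATE}, \eqref{S1S4>0}, all fed through Lemma~\ref{KYLINZHOU CRUCIAL LEMMA}) depends precisely on $\|B_n\|_h$ growing no faster than $e^{2q_{n+1}\varepsilon_n^{1/4}}$ while $|d_n|$ and $\|F_n\|_h$ decay at the stated rates, and your sketch asserts these numbers heuristically rather than deriving them from a concrete iteration bookkeeping. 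Second, your claim that subcriticality by itself ``prevents the classical KAM norm explosion and lets the conclusion be stated on the fixed strip $h$'' is where the real technical content lies; the reason the iteration closes on a strip of uniform width $h=h_*/(2(1+\alpha))$ rather than a shrinking one is nontrivial and is exactly what \cite{ARC} establishes. A reader should not come away with the impression that this theorem is an easy consequence of ART plus Cauchy estimates.
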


The following result is  elementary but crucial for us:
\begin{lemma}\label{KYLINZHOU CRUCIAL LEMMA}
Let $q_n$ to be a subsequence of the  denominators of the continued fraction approximations  of $\alpha$, with the property
$q_{n+1}>e^{(\beta(\alpha)-o(1))q_n}$. 
Let ${\phi_n}$ to be a sequence of functions in $C_h^\omega(\T, \C)$ satisfying for some $c>0$,
\begin{eqnarray*}
\left\|\phi_n(x)\right\|_h &\lesssim& e^{4 q_{n+1} \varepsilon_n^{\frac{1}{4}}},\\
\left\|\phi_n(x+\alpha)-\phi_n(x)\right\|_h &\lesssim& e^{-2cq_{n+1}\delta_n},
\end{eqnarray*}
then for any $0<h'<h$, we have
$$\left\|\phi_n(x)-\int_0^1\phi_{n}(x)dx\right\|_{h'} \lesssim \frac{1}{h-h'}e^{-cq_{n+1}\delta_n}.$$
\end{lemma}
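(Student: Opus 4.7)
The plan is a Fourier-analytic argument that exploits the small-divisor estimate for $\alpha$ coming from the continued-fraction preliminaries in Section~2.1. Expand $\phi_n(x)=\sum_{k\in\Z}\hat\phi_n(k)e^{2\pi ikx}$, so that $\int_0^1\phi_n\,dx=\hat\phi_n(0)$. Using the identity $\widehat{\phi_n(\cdot+\alpha)-\phi_n(\cdot)}(k)=(e^{2\pi ik\alpha}-1)\hat\phi_n(k)$ together with the Paley--Wiener bound $|\hat g(k)|\le \|g\|_h e^{-2\pi|k|h}$ applied to $g=\phi_n(\cdot+\alpha)-\phi_n(\cdot)$, one obtains for every $k\neq 0$ the inequality
$$|\hat\phi_n(k)|\lesssim \frac{1}{\|k\alpha\|_{\R/\Z}}\,e^{-2cq_{n+1}\delta_n}\,e^{-2\pi|k|h}.$$
Alternatively, the direct Paley--Wiener bound for $\phi_n$ itself gives $|\hat\phi_n(k)|\lesssim e^{4q_{n+1}\varepsilon_n^{1/4}}e^{-2\pi|k|h}$. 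The goal is to estimate $\phi_n(x)-\hat\phi_n(0)=\sum_{k\ne 0}\hat\phi_n(k)e^{2\pi ikx}$ uniformly on $|\Im x|<h'$, using whichever of these two bounds is stronger in each range of $k$.

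Split the Fourier tail at the cutoff $K:=\lfloor q_{n+1}/2\rfloor$. For $1\le|k|\le K<q_{n+1}$ the continued-fraction estimate $\|k\alpha\|_{\R/\Z}\ge \|q_n\alpha\|_{\R/\Z}\ge(2q_{n+1})^{-1}$ applies, so summing the first bound on $|\Im x|<h'$ yields
$$\sum_{0<|k|\le K}|\hat\phi_n(k)|\,e^{2\pi|k|h'}\lesssim q_{n+1}\,e^{-2cq_{n+1}\delta_n}\sum_{k\ge 1}e^{-2\pi k(h-h')}\lesssim \frac{q_{n+1}\,e^{-2cq_{n+1}\delta_n}}{h-h'}.$$
For $|k|>K$, the direct bound gives
$$\sum_{|k|>K}|\hat\phi_n(k)|\,e^{2\pi|k|h'}\lesssim \frac{1}{h-h'}\exp\!\bigl(4q_{n+1}\varepsilon_n^{1/4}-2\pi K(h-h')\bigr).$$

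It remains to check that both pieces are $\lesssim \frac{1}{h-h'}e^{-cq_{n+1}\delta_n}$. For the low-frequency piece this reduces to $q_{n+1}\le C\,e^{cq_{n+1}\delta_n}$, which holds for all sufficiently large $n$ since $\delta_n=h_*/2^n$ and the growth assumption $q_{n+1}\ge e^{(\beta(\alpha)-o(1))q_n}$ together force $cq_{n+1}\delta_n$ to dominate any polynomial function of $q_{n+1}$. For the high-frequency piece, with $K=\lfloor q_{n+1}/2\rfloor$, it reduces to $\pi(h-h')\ge 4\varepsilon_n^{1/4}+c\delta_n$, which again holds for large $n$ because both $\varepsilon_n$ and $\delta_n$ tend to $0$.

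The main technical point is the balancing of the two competing tails via the cutoff $K$: the polynomial small-divisor loss $q_{n+1}$ has to be absorbed by the exponential $e^{-cq_{n+1}\delta_n}$, which is guaranteed precisely by the fast growth $q_{n+1}\ge e^{(\beta(\alpha)-o(1))q_n}$ of the subsequence fixed in Theorem~\ref{LIOVILLEANARC} combined with the $2^{-n}$ decay of $\delta_n$. Apart from this bookkeeping, the argument is entirely routine.
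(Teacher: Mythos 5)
Your proposal is correct and follows essentially the same approach as the paper: Fourier expansion, Paley–Wiener bounds for both $\phi_n$ and its difference $\phi_n(\cdot+\alpha)-\phi_n(\cdot)$, the continued-fraction small-divisor estimate $\|k\alpha\|_{\R/\Z}\ge(2q_{n+1})^{-1}$ for $0<|k|<q_{n+1}$, a frequency cutoff (the paper uses $q_{n+1}$ where you use $\lfloor q_{n+1}/2\rfloor$, an inessential difference), and the same final bookkeeping that $q_{n+1}\lesssim e^{cq_{n+1}\delta_n}$ while $4\varepsilon_n^{1/4}$ is eventually dominated by $\pi(h-h')$.
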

\begin{proof}

Let $\varphi_n(x)=\phi_{n}(x+\alpha)-\phi_{n}(x)$. 
The assumption yields $$\left|\hat{\varphi}_n(k)\right|\leq \left\|\varphi_n\right\|_{h}
e^{-2\pi |kh|}\lesssim e^{-2cq_{n+1}\delta_n-2\pi |kh|},$$ $$\left|\hat{\phi}_n(k)\right|\leq \left\|\phi_n\right\|_{h}e^{-2\pi |kh|}\lesssim e^{4q_{n+1}\varepsilon_n^{\frac{1}{4}}-2\pi |kh|}.$$ For $0<|k|<q_{n+1}$, since $\|k\alpha\|_{\R/\Z}\geq\frac{1}{2q_{n+1}}$, from the fact that $\hat{\varphi}_n(k)=\hat{\phi}_n(k)(e^{2k\pi i\alpha}-1)$, we deduce the estimation $$\left|\hat{\phi}_n(k)\right|\leq\frac{\left|\hat{\varphi}_n(k)\right|}{\left| e^{2k\pi i\alpha}-1 \right|}\leq\frac{\left|\hat{\varphi}_n(k)\right|}{\|k\alpha\|_{\R/\Z}}\lesssim q_{n+1} e^{-2cq_{n+1}\delta_n-2\pi |kh|}.$$ Therefore, we can estimate
$$
\begin{aligned}
    \left\|\phi_{n}(x)-\hat{\phi}_n(0)\right\|_{h'}
    &\leq \sum_{0<|k|<q_{n+1}}\left|\hat{\phi}_n(k)\right| e^{2\pi |kh^\prime|}+\sum_{|k| \geq q_{n+1}}\left|\hat{\phi}_n(k)\right|e^{2\pi |kh^\prime|}\\
    &\lesssim \sum_{0<|k|< q_{n+1}}q_{n+1}e^{-2cq_{n+1}\delta_n-2\pi |k|(h-h^\prime)}+\sum_{|k| \geqslant q_{n+1}} e^{{4q_{n+1}\varepsilon_n^{\frac{1}{4}}}-2\pi |k|(h-h^\prime)}\\
    &\lesssim  \frac{1}{1-e^{-2 \pi (h-h^{\prime})}}(q_{n+1}e^{-2cq_{n+1}\delta_n}+e^{{4q_{n+1}\varepsilon_n^{\frac{1}{4}}}-2\pi q_{n+1}(h-h^\prime)}).\\
    &\lesssim  \frac{1}{h-h'}e^{-cq_{n+1}\delta_n}.
\end{aligned}
$$
 The last inequality holds since by  \eqref{EPSILON,DELTA,HN} and $q_{n+1}>e^{(\beta(\alpha)-o(1))q_n}$, we have $q_{n+1}\lesssim e^{cq_{n+1}\delta_n}$ and $e^{{4q_{n+1}\varepsilon_n^{\frac{1}{4}}}-2\pi q_{n+1}(h-h^\prime)}\lesssim e^{-cq_{n+1}\delta_n}$.
\end{proof}

Now we finish the proof of Proposition \ref{ANTIPERIODIC POSITIVE DENSE} for the case $\beta(\alpha)>0$.

\begin{proof}
By Theorem \ref{LIOVILLEANARC}, we have 
\begin{equation}\label{re2}
    B_n(x+\alpha)^{-1} A(x) B_n(x)=I_2+d_n\mathcal{L}+F_n(x).
\end{equation}
We will follow the similar scheme as the case $\beta(\alpha)=0$, and
we set 
\begin{equation}\label{sndef}
S_n(x)=B_n(x)^{-1}\mathcal{T} B_n(x+\frac{1}{2})
\end{equation}
first by the same argument as above,  $B_n \in C_{h}^\omega(\T, \SL(2, \R))$, which imply  $S_n \in C^\omega
(\T,\GL(2,\R))$ with $\operatorname{det}S_n(x)=-1$ for any $ x \in \T$. By \eqref{ESTIMATES OF DBF}, we have an estimation 
\begin{equation}\label{SIZE OF S}
    \left\|S_n(x)\right\|_h \lesssim e^{4 q_{n+1} \varepsilon_n^{\frac{1}{4}}}.
\end{equation} By the fact $A \in \mathscr{A}$ we have the following equations similar to \eqref{xal1}, \eqref{xal2}:
\begin{eqnarray}\label{xal3}
     \mathcal{T} B_n\left(x+\frac{1}{2}+\alpha\right)&=&B_n(x+\alpha) S_n(x+\alpha),\\
    \label{xal4} A(x)B_n(x)S_n(x)&=& \mathcal{T}A(x+\frac{1}{2}) B_n(x+\frac{1}{2}). 
\end{eqnarray}
We then distinguish the proof into two cases:\\

\smallskip
\textbf{Case I:}
$\left|d_n\right| \leqslant e^{-\frac{1}{6} q_{n+1} \delta_n}$ for any $n \in \mathbb{N}_{+}.$

In this scenario we set $Y_n(x)=F_n(x)+d_n\mathcal{L}$, then we have the estimation $\left\|Y_n(x)\right\|_h \lesssim e^{-\frac{1}{6} q_{n+1} \delta_n}.$
By \eqref{re2} and \eqref{xal4}, we have 
$$\mathcal{T}  B_n(x+\frac{1}{2}+\alpha)(I_2+Y_n(x+\frac{1}{2}))=B_n(x+\alpha)(I_2+Y_n(x)) S_n(x)$$
Combining this with \eqref{xal3}, one gets the conclusion that $S_n(x+\alpha)(I_2+Y_n(x+\frac{1}{2}))=(I_2+Y_n(x)) S_n(x)$, implies that
\begin{equation}\label{ESTIMATE}
    \left\|S_n(x+\alpha)-S_n(x)\right\|_h=\left\|Y_n(x) S_n(x)-S_n(x+\alpha) Y_n(x+\frac{1}{2})\right\|_h \lesssim e^{-\frac{1}{8} q_{n+1} \delta_n}.
\end{equation}
By \eqref{SIZE OF S} and \eqref{ESTIMATE},  Lemma \ref{KYLINZHOU CRUCIAL LEMMA} gives us 
\begin{equation}\label{NEW SIZE OF SN}
    \left\|S_n(x)-\hat{S}_n(0)\right\|_{h/2} \lesssim e^{-\frac{1}{16} q_{n+1} \delta_n}.
\end{equation}
Moreover, by \eqref{sndef}, we have $$
\mathcal{T}  B_n(x+\frac{1}{2})=B_n(x) S_n(x)=\mathcal{T}B_n(x-\frac{1}{2}) S_n(x-\frac{1}{2}) S_n(x)
$$ which implies that
\begin{equation}\label{SN2=I2}
    S_n(x-\frac{1}{2}) S_n(x)=I_2.
\end{equation}
Let $ \tilde{S}_n(x)=S_n(x)-\hat{S}_n(0)$, by  \eqref{NEW SIZE OF SN}, \eqref{SN2=I2} we have an estimation 
\begin{equation*}
    \left\|\hat{S}_n(0)^2-I_2\right\|=\left\|\hat{S}_n(0) \tilde{S_n}(x)+\tilde{S_n}(x+\frac{1}{2}) \hat{S}_n(0)+\tilde{S_n}(x+\frac{1}{2}) \tilde{S_n}(x)\right\|_{h/2} \lesssim e^{-\frac{1}{18} q_{n+1} \delta_n}.
\end{equation*} 
Since $\operatorname{det}(S_n(x))=-1$, by \eqref{NEW SIZE OF SN}
we know that $\operatorname{det} \hat{S}_n(0)<0$ whenever $n$ is large enough, hence $\hat{S}_n(0)$ has two distinct eigenvalues $\lambda_n>0>k_n,$ with estimate 
\begin{equation}\label{LAMDA K EST}
    \max \left\{\left|k_n+1\right|,\left|\lambda_n-1\right|\right\} \lesssim e^{-\frac{1}{18} q_{n+1} \delta_n}.
\end{equation}
Meanwhile, there exists $ T_n \in \SL(2, \mathbb{R})$, such that
     \begin{equation}\label{T-1ST}
         T_n^{-1} \hat{S}_n(0) T_n=\left(\begin{array}{cc}k_n & 0 \\ 0 & \lambda_n\end{array}\right).
     \end{equation}
by \cite[Lemma A.6]{Eli02}, one has estimate
 \begin{equation}\label{SIZE OF T}
     \left\|T_n\right\| \lesssim \max \left\{1, \frac{\left\|\hat{S}_n(0)\right\|}{\left|k_n-\lambda_n\right|}\right\} \lesssim e^{4 q_{n+1} \varepsilon_n^{\frac{1}{4}}},
 \end{equation}

Define $M_n \in C^\omega(\T,\SL(2,\R))$ by $M_n(x)=B_n(x)T_n$, then by 
\eqref{sndef}, we have 
\begin{eqnarray}
\nonumber \mathcal{T} M_n(x+\frac{1}{2}) \mathcal{T}^{-1}
&=&\mathcal{T} B_n(x+\frac{1}{2}) T_n  \mathcal{T}^{-1} 
=M_n(x)\left(T_n^{-1} \hat{S}_n(0) T_n+T_n^{-1} \tilde{S_n}(x) T_n\right) \\
\label{TBT} &=& M_n(x) +G_n(x),
\end{eqnarray}
where $G_n(x)=(M_n(x)\operatorname{diag}(k_n+1,\lambda_n-1)+M_n(x)T_n^{-1} \tilde{S_n}(x)T_n)\mathcal{T}.$ 
Now, by \eqref{TBT} and the fact that ${M_n}(x+1)=M_n(x)$, we have
\begin{eqnarray*}
    \mathcal{T}G_n(x+\frac{1}{2})\mathcal{T}^{-1}& =&-G_n(x),\\
    \mathcal{T}\left(M_n(x+\frac{1}{2})+\frac{1}{2} G_n(x+\frac{1}{2})\right)\mathcal{T}^{-1} &=& M_n(x)+\frac{1}{2} G_n(x).
\end{eqnarray*}
Therefore,  we set $\Gamma_n(x)=M_n(x)+\frac{1}{2} G_n(x)$, then 
$\Gamma_n(x+\frac{1}{2})=\mathcal{T}^{-1}   \Gamma_n(x)\mathcal{T} .$ 
This means $\Gamma_n(x)$ already satisfy the group structure, the only problem is that its determinant is not $1$.

To deal with this issue, quantitative estimates are involved. First    by \eqref{ESTIMATES OF DBF}, \eqref{NEW SIZE OF SN}, \eqref{LAMDA K EST} and \eqref{SIZE OF T}, one can get the estimations
\begin{equation}\label{EST OF B G}
    \|M_n(x)\|_{h/2} \lesssim e^{6q_{n+1}\varepsilon
_n^{\frac{1}{4}}}, \quad \|G_n(x)\|_{h/2}\lesssim e^{-\frac{1}{20}q_{n+1}\delta_n}.
\end{equation}
As $M_n \in C^\omega(\T,\SL(2,\R))$, this imply that 
\begin{equation}\label{EST GAMMA}
    \left\|\Gamma_n(x)\right\|_{h/2} \lesssim e^{8 q_{n+1} \varepsilon_n^{ \frac{1}{4}}}, \quad \left\|\operatorname{det}\Gamma_n(x)-1\right\|_{h/2} \lesssim e^{-\frac{1}{22}q_{n+1}\delta_n}, \quad \left\|\Gamma_n(x)^{-1}\right\|_{h/2} \lesssim e^{8 q_{n+1} \varepsilon_n^{ \frac{1}{4}}}.
\end{equation}
Thus $\Gamma_n \in C^\omega(\T, \GL(2,\R))$ for sufficiently large $n$.
After that, under the action of $A(x)$:
$$
\begin{aligned}
A(x) \Gamma_n(x)&=A(x) B_n(x) T_n+\frac{1}{2} A(x) G_n(x)\\
& =B_n(x+\alpha)(I_2+Y_n(x)) T_n+\frac{1}{2} A(x) G_n(x) \\
& =\Gamma_n(x+\alpha)-\frac{1}{2} G_n(x+\alpha)+\frac{1}{2} A(x) G_n(x)+B_n(x+\alpha) Y_n(x) T_n.
\end{aligned}
$$
By \eqref{ESTIMATES OF DBF}, \eqref{EST OF B G} and \eqref{EST GAMMA}, we get the estimation 
\begin{equation}\label{NEW ARC}
    \left\|A(x)-\Gamma_n(x+\alpha) \Gamma_n(x)^{-1}\right\|_{h/2} \lesssim e^{-\frac{1}{22} q_{n+1} \delta_n}
\end{equation}

Finally, we take a sequence $\{A^{(n)}\}$ defined as follows:
$$A^{(n)}(x) = \Gamma_n(x+\alpha) \begin{pmatrix}
(1+e^{-q_{n+1} \delta_n})\frac{\operatorname{det}\Gamma_n(x)}{\operatorname{det}\Gamma_n(x+\alpha)} & 0 \\
0 & \frac{1}{1+e^{-q_{n+1} \delta_n}}
\end{pmatrix} \Gamma_n(x)^{-1},$$
It is straightforward to verify that $A^{(n)}(x+\frac{1}{2}) = \mathcal{T}^{-1}A^{(n)}(x)\mathcal{T}$, indicating that $A^{(n)}$ belongs to the set $\mathscr{A}$. Furthermore, the Lyapunov exponent of $A^{(n)}$ can be estimated as $L(A^{(n)}) = \ln(1+e^{-q_{n+1} \delta_n}) > 0$.

Using \eqref{EST GAMMA} and \eqref{NEW ARC}, we obtain the estimate:
$$\left\|A^{(n)}(x) - A(x) \right\|_{h/2} \lesssim e^{16q_{n+1}\varepsilon_n^{\frac{1}{4}}}(e^{-\frac{1}{22}q_{n+1}\delta_n}+e^{-\frac{1}{22}q_{n+1}\delta_n}) \lesssim e^{-\frac{1}{24}q_{n+1}\delta_n},$$
which implies that $A^{(n)}$ converges to $A$ in the $C_{h/2}^\omega$-topology. Employing the upper semicontinuity of the acceleration, we find that $\omega(A^{(n)}) = 0$ for sufficiently large $n$. Consequently, we are able to perturb $(\alpha,A)$ to a sequence of cocycles with the desired properties.

\smallskip
\textbf{Case II}:  there exists $\left\{n_k\right\} \subseteq N_{+}$ such that
\begin{equation}\label{SLOWLY DEACREASING D}
    \left|d_{n_k}\right| \geqslant e^{-\frac{1}{6} q_{n_k+1} \delta n_k}.
\end{equation}
By \eqref{re2} and \eqref{xal4}, we get
\begin{eqnarray*}
 \mathcal{T}  B_n(x+\frac{1}{2}+\alpha)\left(\left(I_2+d_n\mathcal{L}\right)+F_n(x+\frac{1}{2})\right)=B_n(x+\alpha)\left(\left(I_2+d_n\mathcal{L}\right)+F_n(x)\right) S_n(x).
\end{eqnarray*}
Combining this with \eqref{xal3}, the following equation holds:
\begin{equation}\label{SNPARABOLIC SMALL}
    S_n(x+\alpha)\left(I_2+d_n\mathcal{L}\right)-\left(I_2+d_n\mathcal{L}\right) S_n(x)=F_n(x) S_n(x)-S_n(x+\alpha) F_n(x+\frac{1}{2})
\end{equation}
We write write $S_n(x)$ as
\begin{equation}\label{We write Sn(x) as}
    S_n(x)=\left(\begin{array}{ll}s_1(x, n) & s_2(x, n) \\ s_3(x, n) & s_4(x, n)\end{array}\right),
\end{equation} then denote $\tilde{s}_j(x,n)=s_j(x,n)-\hat{s}_j(0,n),$ for $j=1,2,3,4,$ and let $$ \quad F_n(x) S_n(x)-S_n(x+\alpha) F_n(x+\frac{1}{2})=\left(\begin{array}{ll}f_1(x, n) & f_2(x, n) \\ f_3(x, n) & f_4(x, n)\end{array}\right),$$ then \eqref{SNPARABOLIC SMALL} can be rewritten as 
\begin{equation}\label{EQ S1}
    s_1(x+\alpha, n)-s_1(x, n)=d_n s_3(x, n)+f_1(x, n)
\end{equation}
\begin{equation}\label{EQ S2}
    s_2(x+\alpha,n)-s_2(x, n)=d_n s_4(x,n)-d_n s_1(x+\alpha,n)+f_2(x, n)
\end{equation}
\begin{equation}\label{EQ S3}
   s_3(x+\alpha,n)-s_3(x,n)=f_3(x,n)
\end{equation}
\begin{equation}\label{EQ S4}
   s_4(x+\alpha, n)-s_4(x,n)=-d_n s_3(x+\alpha,n)+f_4(x,n)
\end{equation}
By \eqref{ESTIMATES OF DBF},  $\left\|f_j(x, n)\right\|_h \lesssim e^{-\frac{5}{6} q_{n+1} \delta_n},(j=1,2,3,4).$ Integrating both sides of \eqref{EQ S1}, we get $d_n \hat{s}_3(0, n)+\hat{f}_1(0, n)=0$, thus
\begin{equation}\label{s3ESTIMATE}
    \left|\hat{s}_3(0, n_k)\right|=\left|\frac{\hat{f}_1(0, n_k)}{d_{n_k}}\right| \lesssim e^{-\frac{2}{3} q_{n_k+1} \delta_{n_k}}.
\end{equation}
By \eqref{SIZE OF S}, we can use Lemma \ref{KYLINZHOU CRUCIAL LEMMA} to demonstrate that  $ \left\|\tilde{s}_3(x, n)\right\|_{\frac{h}{2}} \lesssim e^{-\frac{5}{12} q_{n+1} \delta_n}$ from the equation \eqref{EQ S3}, thus by \eqref{s3ESTIMATE}, we have
\begin{equation}\label{SIZE OF S3(x)}
     \left\|s_3(x, n)\right\|_{\frac{h}{2}} \lesssim e^{-\frac{5}{12} q_{n+1} \delta_n}
\end{equation}
 Again by \eqref{SIZE OF S} and \eqref{SIZE OF S3(x)}, from \eqref{EQ S1} \eqref{EQ S4} we can use Lemma \ref{KYLINZHOU CRUCIAL LEMMA} to demonstrate that
 \begin{equation}\label{SIZE OF S1}
     \left\|\tilde{s}_1(x, n_k)\right\|_{\frac{h}{4}} \lesssim e^{-\frac{1}{3} q_{n_k+1} \delta_{n_k}}, \quad \left\|\tilde{s}_4(x, n_k)\right\|_{\frac{h}{4}} \lesssim e^{-\frac{1}{3} q_{n_k+1} \delta_{n_k}}.
\end{equation}
Integrating both sides of \eqref{EQ S2} gives that $0=d_n \hat{s}_4(0, n)-d_n \hat{s}_1(0, n)+\hat{f}_2(0, n),$ by \eqref{SLOWLY DEACREASING D} we get the estimation 
\begin{equation}\label{S1S4>0}
    \left|\hat{s}_4\left(0, n_k\right)-\hat{s}_1(0, n_k)\right|=\left|\frac{\hat{f_2}(0, n_k)}{d_{n_k}}\right| \lesssim e^{-\frac{2}{3} q_{n_k+1} \delta_{n_k}}.
\end{equation}
Finally, through computation, we have
$$
\begin{aligned}
-1&=\operatorname{det} S_n(x)\\ & =s_1(x, n) s_4(x, n)-{s}_2(x, n) s_3(x, n) \\
& =\hat{s}_1(0, n) \hat{s}_4(0, n)+\tilde{s}_1(x, n) \tilde{s}_4(x, n)+\tilde{s}_1(x, n) \hat{s}_4(0, n)+\tilde{s}_4(x, n) \hat{s}_1(0, n)-{s}_2(x, n) s_3(x, n),
\end{aligned}
$$
Since \eqref{SIZE OF S} gives that 
$\left\|s_2(x, n)\right\|_{\frac{h}{4}} \lesssim e^{4 q_{n+1} \varepsilon_n^{\frac{1}{4}}}, \left|\hat{s}_1(0, n)\right| \lesssim e^{4 q_{n+1} \varepsilon_n^{\frac{1}{4}}},
\left|\hat{s}_4(0, n)\right| \lesssim e^{4 q_{n+1} \varepsilon_n^{\frac{1}{4}}},$ by \eqref{SIZE OF S3(x)} and \eqref{SIZE OF S1} we can get an estimation $\left|\hat{s}_1(0, n) \hat{s}_4(0, n)+1\right| \lesssim e^{-\frac{1}{4} q_{n+1} \delta_n}$. Thus, $\hat{s}_1(0, n)$ $\hat{s}_4(0, n)<0$ whenever $n$ is large enough, implies that
$$
\begin{aligned}
\left|\hat{s}_1(0, n)-\hat{s}_4(0, n)\right|  \geqslant 2 \sqrt{-\hat{s}_1(0, n) \hat{s}_4(0, n)} 
 \geqslant 2 \sqrt{1-\left|1+\hat{s}_1(0, n) \hat{s}_4(0, n)\right|} 
 \end{aligned}
$$
which  converges to $2$, and contradicts with \eqref{S1S4>0}.

Combining the above two cases, we finish the whole proof.
\end{proof}

\section*{Acknowledgements} 
Qi Zhou is supported by National Key R\&D Program of China (2020 YFA0713300) and Nankai Zhide Foundation. Disheng Xu is supported by National Key R$\&$D Program of China No. 2024YFA1015100, NSFC 12090010
and 12090015.


\begin{thebibliography}{99}
\bibitem{AKLM} D. Alekseevsky, A. Kriegl, M. Losik, and P. W. Michor, Choosing roots of polynomials
smoothly, Israel J. Math. 105 (1998), 203–233.

\bibitem{Av2011}A. Avila, Density of positive Lyapunov exponents for $\SL(2,\mathbb R)$-cocycles. Journal of the American Mathematical Society, 24(4): 999-1014, 2011.

\bibitem{JMD} A. Avila. Density of positive Lyapunov exponents for quasiperiodic $\SL(2,\R)$-cocycles in arbitrary dimension. Journal of Modern Dynamics, 3(4): 631-636, 2009.

\bibitem{Aab}
A. Avila, Absolutely continuous spectrum for the almost Mathieu operator.  arXiv:0810.2965.

\bibitem {Aac}
A. Avila,
Almost reducibility and absolute continuity,
arXiv:1006.0704.

 \bibitem{Global}  A. Avila, Global theory of one-frequency Schrödinger operators. Acta Math, 215: 1-54, 2015

\bibitem{avila2} A. Avila, KAM, Lyapunov exponents,  and the spectral dichotomy for typical one-frequency Schr\"odinger operators. arXiv:2307.11071

\bibitem{AK}A. Avila, R. Krikorian: Monotonic cocycles. Inventiones mathematicae, 202(1): 271-331, 2015.

\bibitem{AJ}A. Avila, S. Jitomiskaya, The Ten Martini Problem, Annals of Mathematics
Second Series, 170(1): 303 -342, 2009.

\bibitem{1D} A. Avila, S. Jitomirskaya and C. Sadel. Complex one-frequency cocycles. Jounal of the European Mathematical Society, 16: 1915-1935, (2014)

\bibitem{ASV}A. Avila, J. Santamaria and M. Viana, Cocycles over partially hyperbolic maps. Astrisque, 358, 13-74.


\bibitem{AV}A. Avila and M. Viana: Simplicity of Lyapunov spectra: proof of the Zorich- Kontsevich conjecture. Acta mathematica, 198(1): 1-56, 2007.

\bibitem{ARC}A. Avila, J. You and Q. Zhou, Dry Ten Martini Problem in the non-critical case. arXiv:2306.16254.

\bibitem{APP1} M. Bauer, M. Bruveris, P. Harms and P. W. Michor, Smooth Perturbations of the Functional Calculus and Applications to Riemannian Geometry on Spaces of Metrics. Communications in Mathematical Physics, 389(2): 899–931, 2022.


\bibitem{BQ}Y. Benoist and J. F. Quint, 2016. Random walks on reductive groups (pp. 153-167). Springer International Publishing.

\bibitem{Bo}J. Bochi, Genericity of zero Lyapunov exponents. Ergodic Theorem and Dynamical Systems, 22: 1667–1696, 2002.

\bibitem{BoVi}J. Bochi and M. Viana, The Lyapunov Exponents of Generic Volume-Preserving and Symplectic Maps. Annals of Math, 161: 1423-1485, 2005.

\bibitem{BV}C. Bonatti and M. Viana: Lyapunov exponents with multiplicity 1 for deterministic products of matrices. Ergodic Theory and Dynamical Systems, 24(05): 1295-1330, 2004.

\bibitem{Cannas}A. Cannas da Silva, Lectures on Symplectic Geometry, Lecture Notes in Mathematics, Springer-Verlag, 2008.

\bibitem{APP2}P.T. Chruściel, E. Delay, P. Klinger, A. Kriegl, P.W. Michor and A. Rainer, Non-singular space-times with a negative cosmological constant: V. Boson stars. Letters in Mathematical Physics, 108(9): 2009–2030, 2018.

\bibitem{DF1}
D. Damanik, and J. Fillman, One-Dimensional Ergodic Schrödinger Operators: I. General Theory. Vol. 221. American Mathematical Society, 2022.

\bibitem{DF2}
D. Damanik, and J. Fillman, One-Dimensional Ergodic Schrödinger Operators: II. Specific Classes. Vol. 249. American Mathematical Society, 2024.

\bibitem{Eli02}
L. H. Eliasson, Perturbations of linear quasi-periodic system.
In {\it Dynamical Systems and Small Divisors} (pp. 1-60). Springer, Berlin, Heidelberg (2002).


\bibitem{F-K}
 B. Fayad  and R. Krikorian, 
Rigitidy results for quasiperiodic
$\SL(2,\R)$-cocycles,
Journal of Modern Dynamics, \textbf{3} no. 4 (2009),
479-510.

\bibitem{FK60}H. Furstenberg, H. Kesten: Products of random matrices. The Annals of Mathematical Statistics, 457-469, 1960. 


\bibitem{Fur63} H. Furstenberg, Noncommuting random products. Transactions of the American Mathematical Society, 377-428, 1963.

\bibitem{ge}L. Ge,  On the Almost Reducibility Conjecture. Geom. Funct. Anal. 34, 32–59 (2024). 

\bibitem{Ge-Jito-You}L. Ge, S. Jitomirskaya, J. You. Kotani theory, puig’s argument, and stability of the ten martini problem. arXiv:2308.09321

\bibitem{Ge-Jito}L. Ge, S. Jitomirskaya. Hidden subcriticality, symplectic structure, and universality of sharp arithmetic spectral results
for type I operators.  arXiv:2407.08866

\bibitem{GJYZ} L. Ge, S. Jitomirskaya, J. You and Q. Zhou. Multiplicative Jensens formula and quantitative global theory of one-frequency Schr\"odinger operators. arXiv:2306.16387 

\bibitem{GM}I. Y. Gol’dsheid and G. A. Margulis, Lyapunov indices of a product of random matrices. Russian mathematical surveys, 44(5): 11-71, 1989.

\bibitem{GR}Y. Guivarc’h, Y. and A. Raugi: Frontie\`re de Furstenberg, propri\'et\'es de contraction et th\'eor\`emes de convergence. Probability Theory and Related Fields, 69(2): 187-242, 1985.  


\bibitem{Harmer} M. Harmer. Hermitian symplectic geometry and extension theory. Journal of Physics A,
33 (2000): 9193–9203.

\bibitem{HP} A. Haro and J. Puig. A Thouless formula and Aubry duality for long-range Schr\"odinger skew-products. {\it Nonlineraity} 26(5): 1163-1187, 2013.

  

\bibitem{H} M. Herman. Une m\'ethode pour minorer les
exposants de Lyapounov et quelques exemples montrant le caract\`ere
local d'un th\'eor\`eme d'Arnol'd et de Moser sur le tore de dimension $2$. {\it Comment. Math. Helv.} {\bf 58(3)} (1983), 453-502.

\bibitem{Kato} T. Kato, Perturbation Theory for Linear Operators, Fisicalbook, Germany, 1995, pp.106.

\bibitem{KoSi}S. Kotani and B. Simon: Stochastic Schr\"odinger operators and Jacobi matrices on the strip. Communications in mathematical physics, 119(3):403-429, 1988.

\bibitem{Ko}S. Kotani: Ljaponov indices determine absolutely continuous spectra of stationary random one-dimensional Schr\"odinger operators. Stochastic analysis. Ito, K. (ed.) pp. 225-248. North Holland: Amsterdam, 1984.

\bibitem{KL} K. Kurdyka and L. Paunescu, Hyperbolic polynomials and multiparameter real-analytic per-
turbation theory, Duke Math. J. 141 (2008), no. 1, 123–149.

\bibitem{KMR}A. Kriegl, P. W. Michor, and A. Rainer,Denjoy–Carleman differentiable perturbation of polynomials and unbounded opera-
tors, Integral Equations and Operator Theory 71 (2011), no. 3, 407–416.

\bibitem{Kri}
 R. Krikorian, Reducibility, differentiable
rigidity and Lyapunov exponents for quasiperiodic cocycles on
$\mathbb{T}\times SL(2,\mathbb{R})$,
 preprint (www.arXiv.org,math.DS/0402333).

\bibitem{JICM}
S. Jitomirskaya. One-dimensional quasiperiodic operators: global theory,
  duality, and sharp analysis of small denominators.
\newblock In: I{CM}---{I}nternational {C}ongress of {M}athematicians. {V}ol. 2.
  {P}lenary lectures, pp. 1090--1120. EMS Press, Berlin (2023)

\bibitem{johonson and moser} R. Johnson and J. Moser. The rotation number for almost periodic potentials. Communications in mathematical physics, 84: 403–438, 1982.    


\bibitem{Long}Y. Long. Index Theory for Symplectic Paths with Applications. Vol. 207. Birkh\"auser, 2012.

\bibitem{Prasolov} V. Prasolov, Polynomials, pp:20-25. Springer, Germany, 2009. 

\bibitem{Rai1} A. Rainer, Perturbation of complex polynomials and normal operators, Math. Nach. 282
(2009), no. 12, 1623–1636.

\bibitem{Rai}A. Rainer, Perturbation theory for normal operators, Trans. Amer. Math. Soc., 365 (2013), 5545--5577.

\bibitem{Rai2}A. Rainer, Quasianalytic multiparameter perturbation of polynomials and normal matrices,
Trans. Amer. Math. Soc. 363 (2011), no. 9, 4945–4977.

\bibitem{APP3}N. Van Thu and H.-H. Vo, Stability and limiting properties of generalized principal eigenvalue for inhomogeneous nonlocal cooperative system. arXiv preprint, arXiv:2302.02861, 2023.		

\bibitem{V}M. Viana, Almost all cocycles over any hyperbolic system have nonvanishing Lyapunov exponents. Annals of Mathematics, 643-680, 2008.

\bibitem{ViaLec}M. Viana, 2014. Lectures on Lyapunov exponents (Vol. 145). Cambridge University Press.

\bibitem{DiSheng Xu}D. Xu. Density of positive Lyapunov exponents for symplectic cocycles. Journal of the European Mathematical Society, 21(10): 3143-3190, 2019.


\end{thebibliography}
\end{document}